\newcommand{\arxiv}[1]{\href{http://arxiv.org/abs/#1}{\tt arXiv:\nolinkurl{#1}}}
\definecolor{darkblue}{HTML}{111199}
\definecolor{darkgreen}{HTML}{336633}
\definecolor{darkred}{HTML}{993333}
\definecolor{darkpurple}{HTML}{995599}
\def\down{\vee}
\def\up{\wedge}
\def\inc{\operatorname{in}}
\def\pr{\operatorname{pr}}
\def\half{{\textstyle\frac{1}{2}}}
\def\threehalves{{\textstyle\frac{3}{2}}}
\def\isig{i\!\operatorname{-sig}}
\def\iSIG{i\!\operatorname{-}\mathbf{sig}}
\def\la{\lambda}
\def\eps{\varepsilon}
\def\0{{\bar{0}}}
\def\1{{\bar{1}}}
\def\ba{\text{\boldmath$a$}}
\def\bb{\text{\boldmath$b$}}
\def\bz{\text{\boldmath$z$}}
\def\bbeta{\text{\boldmath$\beta$}}
\def\bomega{\text{\boldmath$\omega$}}
\def\bgamma{\text{\boldmath$\gamma$}}
\def\bc{\text{\boldmath$c$}}
\def\bd{\text{\boldmath$d$}}
\def\bj{\text{\boldmath$j$}}
\def\bsigma{\text{\boldmath$\sigma$}}
\def\g{{\mathfrak{g}}}
\def\b{{\mathfrak{b}}}
\def\t{{\mathfrak{t}}}
\def\h{{\mathfrak{h}}}
\def\C{{\mathcal{C}}}
\def\sO{s\mathcal{O}}
\def\sF{s\mathcal{F}}
\def\O{\mathcal{O}}
\def\F{\mathcal{F}}
\def\ob{\operatorname{ob}}
\def\pr{{\operatorname{pr}}}
\def\sF{s\hspace{-0.26mm}F}
\def\sE{s\hspace{-0.26mm}E}
\def\Web{A}
\def\AHC{{\mathcal{AHC}}}
\def\QH{{\mathcal{QH}}}
\newtheorem{theorem}{Theorem}[section]
\newtheorem{lemma}[theorem]{Lemma}
\newtheorem{corollary}[theorem]{Corollary} 
\theoremstyle{definition}  
\newtheorem{definition}[theorem]{Definition}
\newtheorem{example}[theorem]{Example}
\newtheorem{remark}[theorem]{Remark}
\DeclareSymbolFont{bbold}{U}{bbold}{m}{n}
\DeclareSymbolFontAlphabet{\mathbbold}{bbold}
\DeclareMathOperator\End{End}
\DeclareMathOperator\Hom{Hom}
\DeclareMathOperator\N{\mathbb{N}}
\DeclareMathOperator\Z{\mathbb{Z}}
\DeclareMathOperator\CC{\mathbb{C}}
\DeclareMathOperator\K{{\CC}}
\DeclareMathOperator\grmod{grmod}
\def\SVec{\mathcal{SV}ec}
\def\SMod{\mathcal{S\!M}od}
\def\Mod{\mathcal{M}od}
\def\mod{mod}
\def\grmod{grmod}
\DeclareMathOperator\D{\mathcal{D}}
\DeclareMathOperator\B{\mathbf{B}}
\DeclareMathOperator\WT{\mathbf{wt}}
\DeclareMathOperator\wt{\operatorname{wt}}
\DeclareMathOperator\Q{\mathbb{Q}}
\def\q{{\mathfrak{q}}}
\begin{document}

\title[Type C blocks]{\boldmath Type C blocks of super category $\O$}
\author{Jonathan Brundan and Nicholas Davidson}

\begin{abstract}
We show that the blocks of category $\O$
for the Lie superalgebra $\q_n(\K)$ associated to
half-integral weights
carry the structure of a tensor product categorification for the infinite rank Kac-Moody algebra of
type C$_\infty$.
This allows us to prove two conjectures formulated by Cheng, Kwon and Lam. 
We then focus on the full subcategory consisting of finite-dimensional
representations, which we show is a highest weight category with
blocks that are Morita equivalent to certain generalized Khovanov
arc algebras.
\end{abstract}
\thanks{Research supported in part by NSF grant DMS-1161094.}

\address{Department of Mathematics, University of Oregon, Eugene, OR 97403, USA}
\email{brundan@uoregon.edu}

\address{Department of Mathematics, University of Oklahoma, Norman,
  OK 73019, USA}
\email{njd@ou.edu}

\maketitle

\section{Introduction}

In this article, we apply some powerful tools from higher
representation theory to the study of the
BGG category $\O$ for the
Lie superalgebra $\mathfrak{q}_n(\K)$,
and its subcategory $\F$ of finite-dimensional representations. We restrict our attention
throughout to modules with half-integral weights.
In fact, by \cite{C}, the study of the category $\O$ for $\mathfrak{q}_n(\K)$
reduces to studying three types of blocks, known as the type A, type B, and
type C blocks. The half-integral weight case studied here constitutes
all of the type C blocks. For types A and B blocks, we refer the
reader to \cite{CKW, BD} and \cite{B1, CKW, D2}, respectively.

The type C blocks are already known to be
highest weight categories in the sense of \cite{CPS}.
We will prove two conjectures about them
formulated by Cheng, Kwon and Wang, namely, \cite[Conjectures 5.12--5.13]{CKW}. Roughly speaking,
these assert that the combinatorics of type C blocks is controlled by certain canonical
bases
for the tensor power $V^{\otimes n}$ of the minuscule natural representation
$V$ of the quantum group of type C$_\infty$. Actually, in general, one
needs to consider Webster's
``orthodox basis'' from \cite{Web1}, which is subtley different from Lusztig's canonical basis. Since there
is no elementary algorithm to compute Webster's basis, this is still
not an entirely satisfactory picture.

Interest in the category $\F$ (again, for half-integral weights) 
was rekindled by another recent paper of Cheng and Kwon \cite{CK}.
We will show here that $\F$ is a highest weight
category, answering \cite[Question
5.1(1)]{CKW}.
When combined with the main result of \cite{BS4}, our approach 
actually allows us to describe $\F$ in purely diagrammatical
terms: its blocks are equivalent to 
finite-dimensional modules over the generalized Khovanov arc algebras
denoted $K^{+\infty}_r$ in \cite{BS1}. 

The remainder of the article is organized as follows.
\begin{itemize}
\item
In section 2, we set up the underlying combinatorics of the
$\mathfrak{sp}_{2\infty}$-module $V^{\otimes n}$. As observed already
in \cite{CKW}, this may be identified with the Grothendieck group of the
category $\O^\Delta$ of $\Delta$-filtered modules of the category $\O$
to be studied later in the paper. We also give a brief review of 
Lusztig's canonical basis for this module, including an elementary
algorithm to compute it in practice, and recall
\cite[Proposition 4.1]{CKW}, which relates this 
type C canonical basis to some other type A canonical bases.
\item
In section 3, we introduce the supercategory $\sO$ for the Lie
superalgebra $\q_n(\K)$
and all half-integral weights.
Actually, when $n$ is odd, it is more convenient
to work with supermodules over $\q_n(\K) \oplus \q_1(\K)$ following the idea of
\cite{BD}. This means that the supercategory $\sO$ considered here in the odd case 
is the Clifford twist of the one appearing in \cite{CKW}. This trick
unifies our treatment of the even and odd cases, and actually makes
our results slightly stronger for odd $n$.
Mimicking the approach of \cite{BD},
we then show that $\sO$ splits as $\O \oplus \Pi \O$ for a highest weight
category $\O$, and that $\O$ admits the structure of a tensor product
categorification of the $\mathfrak{sp}_{2\infty}$-module $V^{\otimes
  n}$ in the general sense of Losev and Webster \cite{LW}.
Our proof depends crucially on a particular instance of the remarkable isomorphisms
discovered by Kang, Kashiwara and Tsuchioka \cite{KKT}.
\item
In section 4, 
we combine our main
result from section 3 with an argument involving truncation from
$\mathfrak{sp}_{2\infty}$ to $\mathfrak{sp}_{2k}$ and the uniqueness of $\mathfrak{sp}_{2k}$-tensor product categorifications
established in \cite{LW}, in order to prove
the first Cheng-Kwon-Wang conjecture.
This is similar to the proof of the
Kazhdan-Lusztig conjecture for the general linear supergroup 
given in \cite{BLW}.
We also give an application to classifying
the indecomposable projective-injective supermodules in $\sO$.
\item
In section 5, we use another form of truncation, this time
from $\mathfrak{sp}_{2\infty}$ to $\mathfrak{sl}_{+\infty}$,
to establish the second Cheng-Kwon-Wang conjecture. In fact, we show that the category $\O$
admits a filtration whose sections are 
$\mathfrak{sl}_{+\infty}$-tensor product categorifications, a result which may be viewed as
a categorical version of \cite[Proposition 4.1]{CKW}.
When combined with the uniqueness of $\mathfrak{sl}_{+\infty}$-tensor product categorifications
established in \cite{BLW}, this also allows us to
understand the structure of the subcategory $\F$ of $\O$ consisting
of the finite-dimensional supermodules: we show that $\F$ 
decomposes as
$$
\F = \bigoplus_{n_0+n_1=n}
\F_{n_0|n_1}
$$
with $\F_{n_0|n_1}$ being equivalent to a
quotient of the category of rational representations of the general
linear supergroup $GL_{n_0|n_1}(\CC)$. From this, we deduce that $\F$ is
a highest weight category, and its blocks are Morita equivalent
to certain generalized
Khovanov arc algebras
like in \cite{BS4}.
\end{itemize}

\vspace{2mm}
\noindent{\em Acknowledgements.}
We thank 
Shunsuke Tsuchioka for allowing us to include his
counterexamples to positivity in
Example~\ref{tsuchioka}.

\section{Canonical basis}

We are going to be interested in categorifications of certain tensor products of minuscule
representations of various Kac-Moody algebras. 
In this section, we define these tensor products
and make some elementary combinatorial observations about them.
Most of this material also be found in equivalent form in \cite{CKW}, but our conventions
are somewhat different.

\subsection{Minuscule representations}\label{par}
We will need the 
(complex) Kac-Moody algebras
of the following types:
$$
\begin{array}{|l|l|l|}
\hline
\text{Type}&\text{Dynkin diagram}&\text{Simple roots}\\\hline
\mathfrak{sl}_{\infty}
&
{\begin{picture}(113, 15)%
\put(18,2){\circle{4}}%
\put(37,2){\circle{4}}%
\put(56,2){\circle{4}}%
\put(75, 2){\circle{4}}%
\put(94, 2){\circle{4}}%
\put(20, 2){\line(1, 0){15.5}}%
\put(39, 2){\line(1, 0){15.5}}%
\put(58, 2){\line(1, 0){15.5}}%
\put(77, 2){\line(1, 0){15.5}}%
\put(98, 2){\line(1, 0){1}}%
\put(101, 2){\line(1, 0){1}}%
\put(104, 2){\line(1, 0){1}}%
\put(107, 2){\line(1, 0){1}}%
\put(110, 2){\line(1, 0){1}}%
\put(4, 2){\line(1, 0){1}}%
\put(1, 2){\line(1, 0){1}}%
\put(7, 2){\line(1, 0){1}}%
\put(10, 2){\line(1, 0){1}}%
\put(13, 2){\line(1, 0){1}}%
\put(15, 8){\makebox(0, 0)[b]{$_{{-2}}$}}%
\put(34, 8){\makebox(0, 0)[b]{$_{{-1}}$}}%
\put(56, 8){\makebox(0, 0)[b]{$_{0}$}}%
\put(75, 8){\makebox(0, 0)[b]{$_{1}$}}%
\put(94, 8){\makebox(0, 0)[b]{$_{{2}}$}}%
\end{picture}}
&
\alpha_i = \eps_i-\eps_{i+1}\\
\mathfrak{sl}_{+\infty}
&
{\begin{picture}(83, 15)%
\put(6, 2){\circle{4}}%
\put(25, 2){\circle{4}}%
\put(44, 2){\circle{4}}%
\put(8, 2){\line(1, 0){15.5}}%
\put(27, 2){\line(1, 0){15.5}}%
\put(48, 2){\line(1, 0){1}}%
\put(51, 2){\line(1, 0){1}}%
\put(54, 2){\line(1, 0){1}}%
\put(57, 2){\line(1, 0){1}}%
\put(60, 2){\line(1, 0){1}}%
\put(6, 8){\makebox(0, 0)[b]{$_{1}$}}%
\put(25, 8){\makebox(0, 0)[b]{$_{{2}}$}}%
\put(44, 8){\makebox(0, 0)[b]{$_{{3}}$}}%
\end{picture}}
&
\alpha_i = \eps_{i}-\eps_{i+1}
\\
\hline
\mathfrak{sp}_{2\infty}
&
{\begin{picture}(83, 15)%
\put(6,2){\circle{4}}%
\put(25, 2){\circle{4}}%
\put(44, 2){\circle{4}}%
\put(63, 2){\circle{4}}%
\put(8, 1){\line(1, 0){15}}%
\put(8, 3){\line(1, 0){15}}%
\put(27, 2){\line(1, 0){15.5}}%
\put(46, 2){\line(1, 0){15.5}}%
\put(67, 2){\line(1, 0){1}}%
\put(70, 2){\line(1, 0){1}}%
\put(73, 2){\line(1, 0){1}}%
\put(76, 2){\line(1, 0){1}}%
\put(79, 2){\line(1, 0){1}}%
\put(15, -0.8){\makebox(0, 0)[b]{$>$}}%
\put(6, 8){\makebox(0, 0)[b]{$_{0}$}}%
\put(25, 8){\makebox(0, 0)[b]{$_{1}$}}%
\put(44, 8){\makebox(0, 0)[b]{$_{{2}}$}}%
\put(63, 8){\makebox(0, 0)[b]{$_{{3}}$}}%
\end{picture}}
&
\alpha_0 = -2\eps_0,\:
\alpha_i = \eps_{i-1}-\eps_i\:(i > 0)
\\
\mathfrak{sp}_{2k}
&{\begin{picture}(87, 15)%
\put(6,2){\circle{4}}%
\put(25, 2){\circle{4}}%
\put(44, 2){\circle{4}}%
\put(63, 2){\circle{4}}%
\put(82, 2){\circle{4}}%
\put(8, 1){\line(1, 0){15}}%
\put(8, 3){\line(1, 0){15}}%
\put(27, 2){\line(1, 0){15.5}}%
\put(65, 2){\line(1, 0){15.5}}%
\put(48, 2){\line(1, 0){1}}%
\put(51, 2){\line(1, 0){1}}%
\put(54, 2){\line(1, 0){1}}%
\put(57, 2){\line(1, 0){1}}%
\put(60, 2){\line(1, 0){1}}%
\put(15, -0.8){\makebox(0, 0)[b]{$>$}}%
\put(6, 8){\makebox(0, 0)[b]{$_{0}$}}%
\put(25, 8){\makebox(0, 0)[b]{$_{1}$}}%
\put(44, 8){\makebox(0, 0)[b]{$_{{2}}$}}%
\put(63, 7.5){\makebox(0, 0)[b]{$_{{k-2}}$}}%
\put(83, 7.5){\makebox(0, 0)[b]{$_{{k-1}}$}}%
\end{picture}}
&
\alpha_0 = -2\eps_0,\:
\alpha_i = \eps_{i-1}-\eps_i\:(0 < i < k)\\
\hline
\end{array}
$$
Suppose that $\mathfrak{s}$ is one of these Lie algebras.
Letting $I$ denote the set that indexes the vertices of
the underlying Dynkin diagram in the above table, $\mathfrak{s}$ is generated by
its {\em Chevalley generators}
$\{e_i, f_i\:|\:i \in I\}$ subject to the usual Serre relations.
Let $\mathfrak{t}$ be the Cartan subalgebra spanned by $\left\{h_i :=
[e_i,f_i]\:|\:i \in I\right\}$.
We also introduce the {\em weight lattice} $P := \bigoplus_{i \in I}
\Z \eps_i$, which we identify with an Abelian subgroup of $\t^*$ so that the {\em simple roots} $\{\alpha_i\:|\:i \in
I\}$ of $\mathfrak{s}$
are identified with the elements of $P$ indicated in the table. Note
then that
\begin{equation}
\langle h_i, \alpha_j \rangle = 2\frac{(\alpha_i, \alpha_j)}{(\alpha_i,\alpha_i)}
\end{equation}
where $(\cdot,\cdot)$ is the bilinear form on $P$ defined
from  $(\eps_i, \eps_j) = \delta_{i,j}$.
There is a corresponding {\em dominance order} $\unrhd$ on $P$
defined from $\lambda \unrhd \mu$ if and only if $\lambda-\mu$ is a
sum of simple roots.
(The notation $I, P, \dots$ just introduced is potentially ambiguous as it
depends on the particular choice of $\mathfrak{s}$, but this should always be
clear from the context.)
 
As is evident from the Dynkin diagrams,
there are natural 
inclusions
$$
\mathfrak{sp}_2 \hookrightarrow \mathfrak{sp}_4 \hookrightarrow \mathfrak{sp}_6 \hookrightarrow \cdots
\hookrightarrow \mathfrak{sp}_{2\infty}
\hookleftarrow \mathfrak{sl}_{+\infty} \hookrightarrow \mathfrak{sl}_\infty
$$
sending Chevalley generators to Chevalley generators.
These embeddings will play an important role in our applications.

We proceed to introduce various minuscule representations of these Lie
algebras.

For $\mathfrak{sl}_\infty$, we will consider both its natural module
 $V^+$ and the dual $V^-$. These have standard bases $\{v_j^+\:|\:j \in \Z\}$
and $\{v_j^-\:|\:j \in \Z\}$, respectively. 
The weight of the
 vector $v_j^{\pm}$ is
$\pm \eps_j$,
and
the Chevalley generators act by
\begin{align}\label{e}
f_i v^+_j &= \left\{\begin{array}{ll}
v^+_{j+1}&\text{if $j = i$}\\
0&\text{otherwise,}
\end{array}\right.
&e_i v^+_j &= \left\{\begin{array}{ll}
v^+_{j-1}&\text{if $j = 1+i$}\\
0&\text{otherwise},
\end{array}\right.\\
f_i v^-_j &= \left\{\begin{array}{ll}
v^-_{j-1}&\text{if $j = 1+i$}\\
0&\text{otherwise},
\end{array}\right.
&e_i v^-_j &= \left\{\begin{array}{ll}
v^-_{j+1}&\text{if $j = i$}\\
0&\text{otherwise}.
\end{array}\right.\label{f}
\end{align}
Similarly, we have the natural and dual natural modules
for $\mathfrak{sl}_{+\infty}$, which will be denoted $V_0^+$ and
$V_0^-$, respectively.
Exploiting the inclusion
$\mathfrak{sl}_{+\infty} \hookrightarrow \mathfrak{sl}_\infty$,
we identify $V_0^\pm$ with the submodule of the restriction of $V^\pm$ spanned by $\{v_j^\pm\:|\:j > 0\}$.

For $\mathfrak{sp}_{2\infty}$, we only have its
natural module $V$. This has basis
$\left\{v_j\:|\:j \in \Z\right\}$,
with $v_j$ 
of weight $\eps_{j-1}$ if $j > 0$ or $-\eps_{-j}$ if $j \leq 0$,
and action defined from
\begin{align}\label{fe}
f_i v_j &= \left\{\begin{array}{ll}
v_{j+1}&\text{if $j = \pm i$}\\
0&\text{otherwise},
\end{array}\right.
&e_i v_j &= \left\{\begin{array}{ll}
v_{j-1}&\text{if $j = 1 \pm i$}\\
0&\text{otherwise}.
\end{array}\right.
\end{align}
Similarly, for any $k \geq 1$, we have the natural module
$V_k$ of $\mathfrak{sp}_{2k}$, which is identified with the
submodule of the restriction of $V$ spanned by $\{v_j\:|\:-k < j \leq k\}$.

\begin{lemma}\label{tummy}
As an $\mathfrak{sl}_{+\infty}$-module,
$V$
is isomorphic to $V_0^+
\oplus V_0^-$.
\end{lemma}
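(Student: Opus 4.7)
The plan is to exhibit an explicit decomposition $V = V' \oplus V''$ where $V' := \sspan\{v_j : j > 0\}$ and $V'' := \sspan\{v_j : j \leq 0\}$, to show that both summands are stable under $\mathfrak{sl}_{+\infty}$, and then to identify them with $V_0^+$ and $V_0^-$ respectively. The motivating observation is that under the inclusion $\mathfrak{sl}_{+\infty} \hookrightarrow \mathfrak{sp}_{2\infty}$ one loses precisely the Chevalley generators $e_0, f_0$, and these are the only ones whose action on $V$ crosses between the positive- and non-positive-index basis vectors (via $f_0 v_0 = v_1$ and $e_0 v_1 = v_0$).

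First I would verify stability using \eqref{fe}. For $i \geq 1$, $f_i$ is nonzero only on $v_i \mapsto v_{i+1}$ (both in $V'$) and on $v_{-i} \mapsto v_{1-i}$ (both in $V''$, since $1-i \leq 0$); analogously $e_i$ is nonzero only on $v_{1+i} \mapsto v_i$ and $v_{1-i} \mapsto v_{-i}$, again preserving $V'$ and $V''$ respectively.

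Second, I would write down the two isomorphisms. For $V' \cong V_0^+$ the obvious map $v_j \mapsto v_j^+$ for $j > 0$ matches \eqref{fe} with \eqref{e} term by term. For $V'' \cong V_0^-$ I would use the ``order-reversing'' identification $v_j \mapsto v^-_{1-j}$ for $j \leq 0$: under this relabelling the relations $f_i v_{-i} = v_{1-i}$ and $e_i v_{1-i} = v_{-i}$ coming from \eqref{fe} become $f_i v^-_{1+i} = v^-_i$ and $e_i v^-_i = v^-_{1+i}$, which agree with \eqref{f}.

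There is no real obstacle: the whole argument is a routine bookkeeping check on basis vectors. The only subtle point is the reindexing $j \mapsto 1-j$ used to identify $V''$ with $V_0^-$, which is forced by the asymmetric weight convention for $V$ (weights $\eps_{j-1}$ for $j > 0$ versus $-\eps_{-j}$ for $j \leq 0$) relative to the uniform convention $\pm \eps_j$ adopted for $V_0^\pm$.
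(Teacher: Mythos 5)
Your proposal is correct and follows essentially the same route as the paper: the paper's proof exhibits exactly the same two identifications, $v_j^+\mapsto v_j$ for $j>0$ and $v_j^-\mapsto v_{1-j}$ (your map $v_j\mapsto v^-_{1-j}$ for $j\leq 0$ is its inverse), and your stability and generator-matching checks are just the routine verification the paper leaves implicit.
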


\begin{proof}
The map $v_j^+ \mapsto v_j$ defines an isomorphism between $V_0^+$ and
the $\mathfrak{sl}_{+\infty}$-submodule of $V$ spanned by
$\{v_j\:|\:j > 0\}$.
Similarly, the map $v_j^- \mapsto v_{1-j}$
defines an isomorphism between 
$V_0^-$ and the submodule spanned by $\{v_j\:|\:j \leq
0\}$.
\end{proof}

\subsection{Tensor products}\label{tp}
We are really interested in tensor powers of the minuscule
representations defined so far.
To introduce these, fix $n \geq 1$
and let $\B$ denote the set $\Z^n$ 
of $n$-tuples $\bb = (b_1,\dots,b_n)$ of integers.
Let $\bd_r \in \B$ be the tuple with 1 in its $r$th entry and $0$ in
all other places.
Also, for $k \geq 1$
and a tuple of signs $\bsigma = (\sigma_1,\dots,\sigma_n) \in \{\pm\}^n$, let
\begin{align}
\B_0 &:= \{\bb \in \B\:|\:b_1,\dots,b_n > 0\},\label{B0}\\
\B_k &:= \{\bb \in \B\:|\:-k < b_1,\dots,b_n \leq k\},\label{Bk}\\
\B_\bsigma &:=
\{\bb \in \B\:|\:b_r > 0\text{ if }\sigma_r = +,
b_r \leq 0\text{ if }\sigma_r = -\}.\label{Bsigma}
\end{align}

Let $V^{\otimes \bsigma}$ 
denote the $\mathfrak{sl}_\infty$-module
$V^{\sigma_1}\otimes\cdots\otimes V^{\sigma_n}$.
It has the natural monomial basis
$\left\{v_\bb^\bsigma := v_{b_1}^{\sigma_1}\otimes\cdots\otimes
v_{b_n}^{\sigma_n}\:\big|\:\bb \in \B\right\}$.
The action of the Chevalley generators of $\mathfrak{sl}_\infty$ on this basis is given
explicitly by 
\begin{align}\label{haha}
f_i v_{\bb}^\bsigma &= \sum_{\substack{1 \leq t \leq n \\ \isig_t^\bsigma(\bb) =\mathtt{f}}} v_{\bb+\sigma_t\bd_t}^\bsigma,
&
e_i v_{\bb}^\bsigma &= \sum_{\substack{1 \leq t \leq n\\ \isig_t^\bsigma(\bb) = \mathtt{e}}} v_{\bb-\sigma_r\bd_t}^\bsigma,
\end{align}
where
$\iSIG^\bsigma(\bb) = 
(\isig^\bsigma_1(\bb),\dots,\isig^\bsigma_n(\bb))$
is the {\em $i$-signature} of $\bb \in \B$ (with respect to
$\bsigma$) defined from
\begin{equation}\label{sigdef}
\isig^\bsigma_t(\bb) := 
\left\{
\begin{array}{ll}
\mathtt{f}&\text{if $(b_t,\sigma_t) = (i,+)$ or $(b_t,\sigma_t) = (1+i,-)$},\\
\mathtt{e}&\text{if $(b_t,\sigma_t) = (1+i,+)$ or $(b_t,\sigma_t) = (i,-)$},\\
\bullet&\text{otherwise.}
\end{array}
\right.
\end{equation}
Similarly, we have the $\mathfrak{sl}_{+\infty}$-module
$V_0^{\otimes\bsigma} = V_0^{\sigma_1}\otimes\cdots\otimes
V_0^{\sigma_n}$, which we identify with the submodule of $V^{\otimes
  \bsigma}$ spanned by $\{v_\bb^\bsigma\:|\:\bb \in \B_0\}$.
The projection
\begin{equation}\label{pr0}
\pr_0:V^{\otimes\bsigma} \twoheadrightarrow
V_0^{\otimes \bsigma},
\qquad
v_\bb^\bsigma \mapsto \left\{
\begin{array}{ll}
v_\bb^\bsigma&\text{if $\bb \in \B_0$},\\
0&\text{otherwise}
\end{array}
\right.
\end{equation}
is an $\mathfrak{sl}_{+\infty}$-module homomorphism.

We also have the $\mathfrak{sp}_{2\infty}$-module $V^{\otimes n}$,
with 
basis $\left\{v_\bb := v_{b_1}\otimes\cdots\otimes
  v_{b_n}\:|\:\bb \in \B\right\}$.
The action is given explicitly by the formulae
\begin{align}\label{cg}
f_i v_{\bb} &= \sum_{\substack{1 \leq t \leq n \\ \isig_t(\bb) = \mathtt{f}}} v_{\bb+\bd_t},
&
e_i v_{\bb} &= \sum_{\substack{1 \leq t \leq n\\ \isig_t(\bb) = \mathtt{e}}} v_{\bb-\bd_t},
\end{align}
where this time $\iSIG(\bb) = 
(\isig_1(\bb),\dots,\isig_n(\bb))$
is defined from
\begin{equation}\label{newisig}
\isig_t(\bb) := 
\left\{
\begin{array}{ll}
\mathtt{f}&\text{if }
b_t = \pm i,\\
\mathtt{e}&\text{if }
b_t = 1 \pm i,\\
 \bullet&\text{otherwise.}
\end{array}
\right.
\end{equation}
Similarly, we have the $\mathfrak{sp}_{2k}$-module $V_k^{\otimes n}$, which
is identified with the submodule of $V^{\otimes n}$
spanned by $\{v_\bb\:|\:\bb \in \B_k\}$.
The projection
\begin{equation}\label{prk}
\pr_k:V^{\otimes n} \twoheadrightarrow
V_k^{\otimes n},
\qquad
v_\bb \mapsto \left\{
\begin{array}{ll}
v_\bb &\text{if $\bb \in \B_k$},\\
0&\text{otherwise}
\end{array}
\right.
\end{equation}
is an $\mathfrak{sp}_{2k}$-module homomorphism.

From Lemma~\ref{tummy}, we see that the restriction of $V^{\otimes n}$
to the subalgebra 
$\mathfrak{sl}_{+\infty}$
is isomorphic to $\!\!\displaystyle\bigoplus_{\bsigma \in \{\pm\}^n}
\!\!\!  V_0^{\otimes\bsigma}$.
To write down an explicit isomorphism, 
introduce the function
\begin{equation}\label{primemap}
\B \rightarrow \B_0, \qquad
\bb \mapsto \bb'
\end{equation}
where $\bb'$ is the tuple with $r$th entry $b_r$ if $b_r > 0$ or
$1-b_r$ if $b_r \leq 0$.
This restricts to bijections $\B_\bsigma\stackrel{\sim}{\rightarrow}\B_0$ for
each $\bsigma \in \{\pm\}^n$.
Define a linear map
\begin{equation}\label{prsigma}
\pr_{\bsigma}:V^{\otimes n} \twoheadrightarrow V_0^{\otimes\bsigma},
\qquad
v_\bb \mapsto \left\{
\begin{array}{ll}
v_{\bb'}^\bsigma&\text{if $\bb \in \B_\bsigma$,}\\
0&\text{otherwise.}
\end{array}\right.
\end{equation}
Then:

\begin{lemma}\label{restrict}
The map
$$
\!\!\displaystyle\sum_{\bsigma\in\{\pm\}^n} \!\!\!\pr_\bsigma:
V^{\otimes n}
\stackrel{\sim}{\rightarrow} \!\!\bigoplus_{\bsigma\in\{\pm\}^n}
\!\!\!V_0^{\otimes\bsigma}
$$
is an isomorphism of $\mathfrak{sl}_{+\infty}$-modules.
\end{lemma}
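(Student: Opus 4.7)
The plan is to verify directly that $\sum_\bsigma \pr_\bsigma$ is both a linear bijection and intertwines the action of the Chevalley generators $\{e_i, f_i : i \geq 1\}$ of $\mathfrak{sl}_{+\infty}$. Bijectivity on bases is essentially built into the definition: every $\bb \in \B$ lies in exactly one $\B_\bsigma$ (namely the one with $\sigma_r = +$ iff $b_r > 0$), and $\B_\bsigma \to \B_0, \bb \mapsto \bb'$ is a bijection by construction, so $\sum_\bsigma \pr_\bsigma$ sends the monomial basis $\{v_\bb\}$ of $V^{\otimes n}$ bijectively to the disjoint union of monomial bases of the $V_0^{\otimes \bsigma}$.

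For the module homomorphism property, I would fix $i \geq 1$ and match the formulas (\ref{cg}) and (\ref{haha}) term by term. Suppose $\bb \in \B_\bsigma$ and consider a fixed factor index $t$. In the case $\sigma_t = +$ one has $b_t > 0$ and $b_t' = b_t$, so by (\ref{newisig}) one gets $\isig_t(\bb) = \mathtt{f}$ iff $b_t = i$ (the value $b_t = -i$ being excluded by positivity, using $i \geq 1$), which matches the $+$ case of (\ref{sigdef}) for $\isig^\bsigma_t(\bb')$; moreover $b_t + 1 > 0$ so $\bb + \bd_t \in \B_\bsigma$ and $(\bb + \bd_t)' = \bb' + \bd_t = \bb' + \sigma_t \bd_t$. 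In the case $\sigma_t = -$ one has $b_t \leq 0$ and $b_t' = 1 - b_t$, so $\isig_t(\bb) = \mathtt{f}$ iff $b_t = -i$, equivalently $b_t' = 1 + i$, matching the $-$ case of (\ref{sigdef}); this time $b_t + 1 = 1 - i \leq 0$ keeps us in $\B_\bsigma$, and $(\bb + \bd_t)' = 1 - (b_t+1) = b_t' - 1 = \bb' + \sigma_t \bd_t$. Summing over $t$ gives $(\sum_\bsigma \pr_\bsigma)(f_i v_\bb) = f_i (\sum_\bsigma \pr_\bsigma)(v_\bb)$, and an entirely parallel argument, swapping $\mathtt{f}$ for $\mathtt{e}$ and replacing $b_t = \pm i$ by $b_t = 1 \pm i$, handles $e_i$.

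The main obstacle is only bookkeeping: one must check in each of the four sub-cases (signature $\mathtt{f}$ or $\mathtt{e}$ paired with $\sigma_t = +$ or $-$) that the sign vector $\bsigma$ is preserved under the shift, that the two signature conventions (\ref{newisig}) and (\ref{sigdef}) agree under the substitution $b_t \leftrightarrow b_t'$, and that the prime map converts the shift $\bd_t$ into $\sigma_t \bd_t$. The hypothesis $i \geq 1$ is used crucially: it ensures that the two branches $b_t = i$ and $b_t = -i$ of (\ref{newisig}) cannot overlap, and it removes the Chevalley generator $f_0$ of $\mathfrak{sp}_{2\infty}$ (which would swap the two halves of $V$ via Lemma~\ref{tummy} and thus fail to preserve each $V_0^{\otimes \bsigma}$) from the picture.
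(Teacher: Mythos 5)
Your proof is correct. Note that the paper itself gives no separate argument for this lemma: it is set up so that the statement is immediate from Lemma~\ref{tummy}, since tensoring the rank-one decomposition $V \cong V_0^+ \oplus V_0^-$ over the $n$ factors and distributing over direct sums gives $V^{\otimes n} \cong \bigoplus_{\bsigma} V_0^{\sigma_1}\otimes\cdots\otimes V_0^{\sigma_n}$ as $\mathfrak{sl}_{+\infty}$-modules, and tracking where the basis vector $v_\bb = v_{b_1}\otimes\cdots\otimes v_{b_n}$ lands under the explicit maps $v_j \mapsto v_j^+$ ($j>0$), $v_j \mapsto v_{1-j}^-$ ($j\leq 0$) shows that the component maps are exactly the $\pr_\bsigma$ of (\ref{prsigma}). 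Your route instead verifies the intertwining directly at the level of $V^{\otimes n}$, matching (\ref{cg}) against (\ref{haha}) via the prime map; this is essentially a re-derivation of the rank-one case carried out inside the tensor power, so it is somewhat longer, but it is sound: the four sub-cases are handled correctly, the observation that $i\geq 1$ forces exactly one of $b_t=\pm i$ (resp.\ $b_t = 1\pm i$) to be compatible with the sign $\sigma_t$ is the right point, and your check that the shifted tuple stays in $\B_\bsigma$ and that the prime map converts $\bd_t$ into $\sigma_t\bd_t$ is exactly the compatibility the paper records later (in the proof of Theorem~\ref{jordan}) as a consequence of this lemma. One small payoff of your more explicit argument is that it makes that later-used signature compatibility visible already here; the payoff of the paper's route is brevity, since all case analysis is confined to the $n=1$ statement of Lemma~\ref{tummy}.
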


\subsection{Bruhat order}\label{bos}
Next, we introduce some partial orders on the index set
$\B$. 
These orders arise in Lusztig's construction of
canonical bases for the spaces $V^{\otimes \bsigma}$ and $V^{\otimes
  n}$, which we'll review in more detail in the next subsection.
To define them, we need the {\em inverse dominance order} $\preceq$ on
$P^n$ from \cite[Definition 3.2]{LW}.
For $\bbeta = (\beta_1,\dots,\beta_n) \in P^n$, we write $|\bbeta|$
for $\beta_1+\cdots+\beta_n \in P$.
Then, $\preceq$ is defined by declaring that $\bbeta \preceq \bgamma$
if and only if $|\bbeta|=|\bgamma|$ and
$\beta_1+\cdots+\beta_s \unrhd \gamma_1+\cdots+\gamma_s$
for each $s=1,\dots,n$.
(Obviously, $\preceq$ depends on the particular Lie algebra $\mathfrak{s}$ being considered.)

We start with $\mathfrak{sl}_\infty$.
So fix $\bsigma \in \{\pm\}^n$.
Recall that 
the weight spaces of $V^{\pm}$ are one-dimensional with
$v_j^{\pm}$ of weight $\pm \eps_j$. There is
an injective map
\begin{equation*}
\WT^\bsigma:\B \hookrightarrow P^n,\qquad
\bb \mapsto (\wt^\bsigma_1(\bb),\dots,\wt^\bsigma_n(\bb))
\end{equation*}
with
$\wt^\bsigma_r(\bb) := \sigma_r \eps_{b_r}$; in particular,
$v_\bb^\bsigma$ is of weight $|\WT^\bsigma(\bb)|$.
The {\em $\mathfrak{sl}_\infty$-Bruhat order} $\preceq_\bsigma$ on $\B$ is defined
from 
\begin{equation}\label{o600}
\ba \preceq_\bsigma \bb \Leftrightarrow \WT^\bsigma(\ba) \preceq
\WT^\bsigma(\bb)
\end{equation}
in the inverse dominance order for $\mathfrak{sl}_{\infty}$.
The induced order on the subset $\B_0$ from
(\ref{B0}) is the {\em $\mathfrak{sl}_{+\infty}$-Bruhat order} $\preceq_{\bsigma}$.
Sometimes the following equivalent description of $\preceq_\bsigma$ 
from \cite[Lemma 4.2]{BD} is useful:

\begin{lemma}\label{slequiv}
For $i \in I$ (which is either $\Z$ or $\Z_+$ depending on whether we
are considering $\mathfrak{sl}_\infty$ or $\mathfrak{sl}_{+\infty}$)
and $1 \leq s \leq n$, we let
\begin{equation}\label{N1}
N_{[1,s]}^\bsigma(\bb,i)
:=
\#\{1 \leq r \leq s\:|\:b_r > i, \sigma_r = +\}
-
\#\{1 \leq r \leq s\:|\:b_r > i, \sigma_r = -\}.
\end{equation}
Then, we have that $\ba \preceq_\bsigma \bb$ if and only if
\begin{itemize}
\item
$N_{[1,s]}^{\bsigma}(\ba,i) \leq N_{[1,s]}^\bsigma(\bb,i)$
for all $i \in I$ and $s = 1,\dots,n-1$;
\item
$N_{[1,n]}^\bsigma(\ba,i) = N_{[1,n]}^\bsigma(\bb,i)$
for all $i \in I$.
\end{itemize}
\end{lemma}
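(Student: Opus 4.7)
The strategy is to unpack the definition of $\preceq_\bsigma$ via (\ref{o600}) into two conditions---equality of total weights and dominance of all partial-sum weights---and then translate each into the language of the $N$-statistics by explicitly inverting the relation ``coefficient in the simple-root basis'' via a telescoping computation.

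Set $\lambda^{(s)} := \sum_{r=1}^s \sigma_r \eps_{a_r}$ and $\mu^{(s)} := \sum_{r=1}^s \sigma_r \eps_{b_r}$, viewed as elements of $P$. Unpacking (\ref{o600}) and the definition of $\preceq$ on $P^n$, the condition $\ba \preceq_\bsigma \bb$ becomes $\lambda^{(n)} = \mu^{(n)}$ together with $\lambda^{(s)} \unrhd \mu^{(s)}$ for each $s = 1, \dots, n-1$ (the case $s = n$ of the latter being automatic from the former). The key bridge to the $N$-statistics is the identity
\[
N_{[1,s]}^\bsigma(\ba, i) \;=\; \sum_{j > i} \lambda^{(s)}_j,
\]
where $\lambda^{(s)}_j$ denotes the $\eps_j$-coefficient of $\lambda^{(s)}$; this is immediate from (\ref{N1}), and the analogue holds for $\bb$ and $\mu^{(s)}$.

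Next, translate the dominance condition at each level $s$. Writing $\lambda^{(s)} - \mu^{(s)} = \sum_i c_i^{(s)} \alpha_i = \sum_i c_i^{(s)}(\eps_i - \eps_{i+1})$ and comparing $\eps_j$-coefficients yields $c_j^{(s)} - c_{j-1}^{(s)} = \lambda^{(s)}_j - \mu^{(s)}_j$; telescoping the tail-sum $\sum_{j > k}$ (with the convention $c_0^{(s)} = 0$ in the $\mathfrak{sl}_{+\infty}$ setting to handle the boundary) gives the closed form
\[
c_k^{(s)} \;=\; \sum_{j > k}\bigl(\mu^{(s)}_j - \lambda^{(s)}_j\bigr) \;=\; N_{[1,s]}^\bsigma(\bb,k) - N_{[1,s]}^\bsigma(\ba,k).
\]
Hence $\lambda^{(s)} \unrhd \mu^{(s)}$, i.e.\ all $c_i^{(s)} \geq 0$, is exactly the first bullet of the lemma at level $s$. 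The implicit side-condition $|\lambda^{(s)}| = |\mu^{(s)}|$ needed for a finite simple-root decomposition to exist is automatic, since both totals equal $\sum_{r \leq s}\sigma_r$.

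For the top case, $|\WT^\bsigma(\ba)| = |\WT^\bsigma(\bb)|$ asserts $\lambda^{(n)}_j = \mu^{(n)}_j$ for every $j$. A weight is recovered from its tail-sums by differencing, so this is equivalent to $N_{[1,n]}^\bsigma(\ba,i) = N_{[1,n]}^\bsigma(\bb,i)$ for all $i \in I$, which is the second bullet; in the $\mathfrak{sl}_{+\infty}$ case the potentially missing value at $i = 0$ is automatic since both sides then equal $\sum_{r=1}^n \sigma_r$ when $\ba, \bb \in \B_0$. Assembling the two equivalences proves the lemma. The whole argument is a routine combinatorial translation via telescoping; the only mild subtlety is ensuring the boundary convention $c_0^{(s)} = 0$ handles the $\mathfrak{sl}_{+\infty}$ case cleanly, and verifying the automaticity of the constraints at $i = 0$ and at the top-weight level.
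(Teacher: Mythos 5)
Your proof is correct. The paper does not actually prove this lemma -- it simply cites \cite[Lemma 4.2]{BD} -- so there is no internal argument to compare against, but your direct translation is exactly the expected one: the identity $N^\bsigma_{[1,s]}(\ba,i)=\sum_{j>i}\lambda^{(s)}_j$, the observation that $\lambda^{(s)}-\mu^{(s)}$ automatically lies in the root lattice because both partial weights have coordinate sum $\sigma_1+\cdots+\sigma_s$, and the telescoping computation identifying the simple-root coefficients with $N^\bsigma_{[1,s]}(\bb,k)-N^\bsigma_{[1,s]}(\ba,k)$ together give the equivalence at each level $s$, with the $s=n$ dominance condition subsumed by the equality of total weights. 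One small point worth making explicit: the paper defines the $\mathfrak{sl}_{+\infty}$-Bruhat order on $\B_0$ as the \emph{restriction} of the $\mathfrak{sl}_\infty$-order, whereas your $c_0^{(s)}=0$ convention computes with the $\mathfrak{sl}_{+\infty}$ inverse dominance order; these agree because for $\ba,\bb\in\B_0$ one has $N^\bsigma_{[1,s]}(\ba,i)=\sigma_1+\cdots+\sigma_s=N^\bsigma_{[1,s]}(\bb,i)$ for every $i\leq 0$, so the conditions at nonpositive $i$ are vacuous -- the same observation you already used to dispose of the $i=0$ value at the top level, just applied at every $s$.
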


Turning our attention to $\mathfrak{sp}_{2\infty}$, we consider
instead the inclusion
\begin{equation*}
\WT:\B \hookrightarrow P^n,\qquad
\bb \mapsto (\wt_1(\bb),\dots,\wt_n(\bb))
\end{equation*}
defined by setting $\wt_r(\bb) := \eps_{b_r-1}$ if $b_r > 0$ or
$-\eps_{-b_r}$ if $b_r \leq 0$; in particular $v_\bb$ is of weight $|\WT(\bb)|$.
Then we define the {\em $\mathfrak{sp}_{2\infty}$-Bruhat order} $\preceq$ on $\B$ 
as before: 
\begin{equation}\label{o700}
\ba \preceq\bb\Leftrightarrow \WT(\ba) \preceq \WT(\bb)
\end{equation}
in the inverse dominance order for $\mathfrak{sp}_{2\infty}$.
The {\em $\mathfrak{sp}_{2k}$-Bruhat order} $\preceq$ 
is the induced order on the subset $\B_k$ from
(\ref{Bk}). 
There is a similar characterization of these orders to
Lemma~\ref{slequiv}:

\begin{lemma}\label{spequiv}
For $i \in I$ (which is either $\N$ or $\{0,1,\dots,k-1\}$ for
$\mathfrak{sp}_{2\infty}$ or $\mathfrak{sp}_{2k}$)
and $1 \leq s \leq n$, we let
\begin{equation}\label{N2}
N_{[1,s]}(\bb,i)
:=
\#\{1 \leq r \leq s\:|\:b_r > i\}
-
\#\{1 \leq r \leq s\:|\:b_r \leq -i\}.
\end{equation}
Then, we have that 
$\ba \preceq \bb$ if and only if
\begin{itemize}
\item $N_{[1,s]}(\ba,0) \equiv N_{[1,s]}(\bb,0)\pmod{2}$
for each $s=1,\dots,n-1$;
\item
$N_{[1,s]}(\ba, i) \leq N_{[1,s]}(\bb, i)$
for all $i \in I$ and $s = 1,\dots,n-1$;
\item
$N_{[1,n]}(\ba, i) = N_{[1,n]}(\bb, i)$
for all $i \in I$.
\end{itemize}
\end{lemma}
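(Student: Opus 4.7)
The plan is to translate the definition of $\preceq$ on $\B$ as the inverse dominance order on partial sums of weights into the counting language of the lemma, paralleling the proof of Lemma~\ref{slequiv}. For $\bb \in \B$ and $1 \leq s \leq n$, write $\mu_s(\bb) := \wt_1(\bb) + \cdots + \wt_s(\bb) \in P$. Inspecting the definition of $\wt_r$, the coefficient of $\eps_i$ (for $i \geq 0$) in $\mu_s(\bb)$ equals $\#\{r \leq s : b_r = i+1\} - \#\{r \leq s : b_r = -i\}$, and a telescoping sum then identifies
\[
N_{[1,s]}(\bb,i) \;=\; \sum_{j \geq i} \bigl[\text{coefficient of } \eps_j \text{ in } \mu_s(\bb)\bigr]
\]
for all $i \geq 0$. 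Thus the numbers $N_{[1,s]}(\bb,i)$ are precisely the tail sums of the $\eps$-expansion of $\mu_s(\bb)$.

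Next I characterize $\unrhd$ on $P$ for $\mathfrak{sp}_{2\infty}$ by solving the Cartan equations directly. Writing $\nu = \sum_{i \geq 0} c_i \eps_i = m\alpha_0 + \sum_{i \geq 1} n_i \alpha_i$ and comparing coefficients of $\eps_j$ gives $c_0 = -2m + n_1$ and $c_j = n_{j+1} - n_j$ for $j \geq 1$. Solving from the top down (using that only finitely many $n_i$ are nonzero) yields $n_j = -\sum_{i \geq j} c_i$ for $j \geq 1$ and $m = -\tfrac{1}{2} \sum_{i \geq 0} c_i$. Imposing $m, n_j \in \Z_{\geq 0}$ then shows that $\nu$ is a non-negative integer combination of simple roots if and only if $\sum_{i \geq 0} c_i$ is even and non-positive, and $\sum_{i \geq j} c_i \leq 0$ for every $j \geq 1$.

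Applying this criterion to $\nu = \mu_s(\ba) - \mu_s(\bb)$ and using the tail-sum identification from the first step, $\mu_s(\ba) \unrhd \mu_s(\bb)$ becomes the conjunction of $N_{[1,s]}(\ba,0) \leq N_{[1,s]}(\bb,0)$ together with $N_{[1,s]}(\ba,0) \equiv N_{[1,s]}(\bb,0) \pmod 2$, and $N_{[1,s]}(\ba,i) \leq N_{[1,s]}(\bb,i)$ for all $i \geq 1$. For $s < n$ these give exactly the first two bullets of the lemma, the $i = 0$ inequality being absorbed into the general ``for all $i \in I$'' statement. The requirement $|\WT(\ba)| = |\WT(\bb)|$ built into $\preceq$ on $P^n$ forces $\mu_n(\ba) = \mu_n(\bb)$, which upgrades all comparisons at $s = n$ to equalities, producing the third bullet. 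The $\mathfrak{sp}_{2k}$ statement follows by the identical argument with $I = \{0,1,\ldots,k-1\}$ and $\B$ replaced by $\B_k$.

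The only real subtlety is the parity condition at $i = 0$: it arises entirely from the short simple root $\alpha_0 = -2\eps_0$, via the factor of $2$ in the relation $c_0 = -2m + n_1$, and this is the sole structural difference between the present argument and the one for Lemma~\ref{slequiv} (where every simple root has the shape $\eps_{i-1} - \eps_i$ and no such parity constraint appears).
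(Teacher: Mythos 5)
Your proposal is correct, and since the paper states Lemma~\ref{spequiv} without proof (as the routine $\mathfrak{sp}$-analogue of Lemma~\ref{slequiv}, whose proof is only cited), your computation --- identifying $N_{[1,s]}(\bb,i)$ with the tail sums of the $\eps$-expansion of $\wt_1(\bb)+\cdots+\wt_s(\bb)$, solving uniquely for the coefficients of the simple roots, and extracting the parity constraint from $\alpha_0=-2\eps_0$ --- is exactly the intended verification. The only point worth making explicit in the $\mathfrak{sp}_{2k}$ case is that the paper defines that order as the one induced from $(\B,\preceq)$, so one should observe that $N_{[1,s]}(\bb,i)=0$ for $i\geq k$ whenever $\bb\in\B_k$, whence the conditions for $i\notin I_k$ are vacuous and your rank-$k$ argument characterizes the same order.
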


Recall the set $\B_\bsigma$ from (\ref{Bsigma})
and the bijection $\B_\bsigma \stackrel{\sim}{\rightarrow}\B_0, \bb
\mapsto \bb'$
from (\ref{primemap}).

\begin{lemma}\label{threw}
The map $\bb \mapsto \bb'$ defines a
poset isomorphism
 $(\B_\bsigma, \preceq) \stackrel{\sim}{\rightarrow}
(\B_0, \preceq_\bsigma)$.
\end{lemma}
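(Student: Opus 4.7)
The plan is to translate both Bruhat orders into the combinatorial language of the counting functions from Lemmas \ref{slequiv} and \ref{spequiv} and check that they match up under $\bb \mapsto \bb'$. The bijectivity of this map on the underlying sets is already recorded after (\ref{primemap}), so only the order-theoretic compatibility requires proof.

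The main computation would be this: for $\bb \in \B_\bsigma$ and any $i \geq 1$, I claim
\[
N^\bsigma_{[1,s]}(\bb', i) = N_{[1,s]}(\bb, i).
\]
To see this, split the sum defining $N^\bsigma_{[1,s]}(\bb', i)$ according to the sign $\sigma_r$. When $\sigma_r = +$, the definition of $\B_\bsigma$ forces $b_r > 0$, so $b_r' = b_r$ and the condition $b_r' > i$ becomes $b_r > i$. When $\sigma_r = -$, we have $b_r \leq 0$, so $b_r' = 1 - b_r$, and $b_r' > i \Leftrightarrow b_r \leq -i$ (since $i \geq 1$ and $b_r \in \Z$). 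Since $i \geq 1$, the inequalities $b_r > i$ and $b_r \leq -i$ automatically force $\sigma_r = +$ and $\sigma_r = -$ respectively, so the sign restriction in $N^\bsigma_{[1,s]}$ is redundant and the expression reduces exactly to $N_{[1,s]}(\bb, i)$.

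The only remaining issue is the discrepancy between the index sets for the two Lie algebras: in the $\mathfrak{sp}_{2\infty}$ characterization the value $i = 0$ appears (giving both the parity condition and the equality at $s=n$), whereas for $\mathfrak{sl}_{+\infty}$ the index set starts at $i = 1$. The point is that for any $\bb \in \B_\bsigma$,
\[
N_{[1,s]}(\bb, 0) = \#\{r \leq s : \sigma_r = +\} - \#\{r \leq s : \sigma_r = -\},
\]
which depends only on $\bsigma$. Hence for $\ba, \bb \in \B_\bsigma$ the parity condition and the $i=0$ case of the equality at $s = n$ hold automatically, and need not be separately verified.

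Putting these two observations together, Lemma \ref{spequiv} applied to $\ba, \bb \in \B_\bsigma$ is term-for-term equivalent to Lemma \ref{slequiv} applied to $\ba', \bb' \in \B_0$, proving $\ba \preceq \bb \Leftrightarrow \ba' \preceq_\bsigma \bb'$. No step should be an obstacle; the only thing to be careful about is the $i=0$ boundary case, but as noted this is handled automatically by the defining constraint on $\B_\bsigma$.
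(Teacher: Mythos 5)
Your proposal is correct and follows essentially the same route as the paper: the paper's proof likewise reduces to the characterizations in Lemmas~\ref{slequiv} and \ref{spequiv}, noting that $N_{[1,s]}(\bb,0)=\sigma_1+\cdots+\sigma_s$ depends only on $\bsigma$ and that $N_{[1,s]}(\bb,i)=N^\bsigma_{[1,s]}(\bb',i)$ for $i>0$ when $\bb\in\B_\bsigma$. Your write-up simply supplies the sign-splitting verification of these two identities in more detail (and the same observation also disposes of the $i=0$ inequality for $s<n$, not just the parity and $s=n$ conditions).
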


\begin{proof}
This follows easily from the characterizations of the two Bruhat orders
that we have given, on noting from (\ref{N1})--(\ref{N2}) 
that 
$N_{[1,s]}(\bb,0) = \sigma_1 1 +\cdots + \sigma_s
1$ and
$N_{[1,s]}(\bb,i) = N_{[1,s]}^\bsigma(\bb',i)$
for $\bb \in \B_\bsigma$
and all $i > 0$.
\end{proof}

The remaining lemmas in this subsection are concerned with the
case $\mathfrak{s}=\mathfrak{sp}_{2\infty}$.

\begin{lemma}\label{not}
Suppose that $\ba \succeq \bb$ and 
$\isig_r(\ba) = \isig_n(\bb) =
\mathtt{f}$ 
for some $i \in I$ and
$1 \leq r \leq n$.
Then $\ba + \bd_r \succeq \bb + \bd_n$,
with equality if and only if $\ba = \bb$ and $r=n$.
\end{lemma}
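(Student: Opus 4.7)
The plan is to translate the statement into the numerical criterion provided by Lemma~\ref{spequiv}, and argue by comparing the effect of the two $\mathtt{f}$-moves $\ba\mapsto\ba+\bd_r$ and $\bb\mapsto\bb+\bd_n$ on the counts $N_{[1,s]}(\cdot,j)$. The key computation, which I would carry out first by a direct case analysis (separating $i>0$ with $c_t=i$; $i>0$ with $c_t=-i$; and $i=c_t=0$), is the following: if $\isig_t(\bc)=\mathtt{f}$ with color $i\in I$, then
\begin{equation*}
N_{[1,s]}(\bc+\bd_t,j) - N_{[1,s]}(\bc,j) =
\begin{cases}
[t\le s] & \text{if } j = i > 0,\\
2\,[t\le s] & \text{if } j = i = 0,\\
0 & \text{if } j\neq i,
\end{cases}
\end{equation*}
where $[t\le s]$ denotes $1$ if $t\le s$ and $0$ otherwise. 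Everything else then reduces to bookkeeping.

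Indeed, for each $s\in\{1,\dots,n-1\}$ the shift $\bb\mapsto\bb+\bd_n$ leaves $N_{[1,s]}(\cdot,j)$ unchanged (since $n>s$), while $\ba\mapsto\ba+\bd_r$ adds a nonnegative amount, which is even when $j=0$. Combined with $\ba\succeq\bb$, this gives both the required inequality $N_{[1,s]}(\ba+\bd_r,j)\ge N_{[1,s]}(\bb+\bd_n,j)$ and the mod-$2$ congruence at $j=0$ demanded by Lemma~\ref{spequiv}. For $s=n$, both shifts alter $N_{[1,n]}(\cdot,j)$ by the \emph{same} quantity (since they share color $i$), so the equality $N_{[1,n]}(\ba,j)=N_{[1,n]}(\bb,j)$ supplied by $\ba\succeq\bb$ propagates to the equality required for $\ba+\bd_r\succeq\bb+\bd_n$.

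For the equality clause, the ``if'' direction is trivial. For the converse, suppose $r<n$; then specializing the displayed formula to $s=r$, $j=i$ gives a strict increase
\begin{equation*}
N_{[1,r]}(\ba+\bd_r,i)\;\ge\;N_{[1,r]}(\ba,i)+1\;>\;N_{[1,r]}(\ba,i)\;\ge\;N_{[1,r]}(\bb,i)\;=\;N_{[1,r]}(\bb+\bd_n,i),
\end{equation*}
contradicting $\ba+\bd_r=\bb+\bd_n$. Hence $r=n$, and then cancelling $\bd_n$ forces $\ba=\bb$. I do not anticipate a real obstacle beyond the case analysis for the displayed formula; the color-$0$ case, where the weight change is $2\eps_0$ rather than a difference of $\eps$'s, is the only mild subtlety.
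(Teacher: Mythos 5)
Your proof is correct and follows exactly the route the paper intends: the paper's own proof simply says the statement "may be checked directly from the characterization of the Bruhat order given by Lemma~\ref{spequiv}", and your case analysis of how an $\mathtt{f}$-move in colour $i$ changes the statistics $N_{[1,s]}(\cdot,j)$ (by $1$, $2$, or $0$, and only when $t\le s$) is precisely that check, including the correct handling of the parity condition at $j=0$ and the strict-increase argument ruling out $r<n$ in the equality clause.
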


\begin{proof}
This may be checked directly
from the characterization of the Bruhat order given by Lemma~\ref{spequiv}.
\end{proof}

\begin{lemma}\label{construction2}
For $\bb \in \B$,
there exists $\ba \in \B$ and a monomial $X$ in the
Chevalley generators
$\{f_i\:|\:i \in I\}$ of $\mathfrak{sp}_{2\infty}$
such that
\begin{itemize}
\item
$a_1 > \cdots > a_n$ and 
$a_r+a_s \neq 1$ for all $1 \leq r < s \leq n$;
\item
$
X v_\ba = v_\bb + 
(\text{a sum of $v_\bc$'s
for $\bc \succ \bb$})$.
\end{itemize}
\end{lemma}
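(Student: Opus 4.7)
We argue by induction on $n$. The base case $n = 1$ is immediate: take $\ba := (b_1)$ and $X := 1$, so that both bullet conditions hold vacuously and $X v_\ba = v_\bb$.

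For the inductive step, let $\bb = (b_1, \ldots, b_n)$ with $n \geq 2$, set $\bb^\circ := (b_1, \ldots, b_{n-1})$, and apply the inductive hypothesis in $V^{\otimes(n-1)}$ to obtain $\ba^\circ = (a_1, \ldots, a_{n-1})$ satisfying the two bullet conditions together with a monomial $X^\circ$ such that
\[ X^\circ v_{\ba^\circ} = v_{\bb^\circ} + \sum_{\bc^\circ \succ \bb^\circ} \lambda_{\bc^\circ}\, v_{\bc^\circ}. \]
Choose $a_n \in \Z$ so negative that (a)~$a_n < a_{n-1}$, (b)~$a_n + a_r \neq 1$ for every $r < n$, and (c)~$|a_n|$ strictly exceeds both $\max_r |b_r|$ and every index $|i|$ of a Chevalley generator appearing in $X^\circ$. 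Set $\ba := (a_1, \ldots, a_n)$ and
\[ X := f_{|b_n-1|}\,f_{|b_n-2|}\cdots f_{|a_n|}\cdot X^\circ. \]
Then $\ba$ satisfies the two bullet conditions by construction.

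Condition (c), together with (\ref{newisig}), guarantees that no $f_i$ appearing in $X^\circ$ acts nontrivially on the $n$-th tensor factor $v_{a_n}$, so position $n$ is inert under $X^\circ$, giving
\[ X^\circ v_\ba = v_{(\bb^\circ, a_n)} + \sum_{\bc^\circ \succ \bb^\circ} \lambda_{\bc^\circ}\, v_{(\bc^\circ, a_n)}, \]
and Lemma~\ref{spequiv} shows each $(\bc^\circ, a_n)$ is strictly $\succ (\bb^\circ, a_n)$ in the Bruhat order on $\B$. Next one applies $X'' := f_{|b_n-1|}\cdots f_{|a_n|}$ to each summand. I claim, by a secondary induction on $0 \leq k \leq b_n - a_n$, that for each $\bc^\circ \succeq \bb^\circ$ the first $k$ factors of $X''$ applied to $v_{(\bc^\circ, a_n)}$ produce $v_{(\bc^\circ, a_n+k)}$ plus a sum of $v_\bd$'s with $\bd \succ (\bc^\circ, a_n+k)$. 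In the inductive step one applies $f_{|a_n+k|}$; since the $n$-th entry of the main state equals $a_n + k = \pm|a_n + k|$, we have $\isig_n = \mathtt{f}$ so that Lemma~\ref{not} applies, identifying $v_{(\bc^\circ, a_n+k+1)}$ as the smallest new term and ensuring that every other new term, whether produced from the main state or from an existing higher $v_\bd$, is strictly $\succ (\bc^\circ, a_n+k+1)$. After $b_n - a_n$ steps, $X'' v_{(\bc^\circ, a_n)} = v_{(\bc^\circ, b_n)} + (\text{higher})$, and since $(\bc^\circ, b_n) \succ \bb$ whenever $\bc^\circ \succ \bb^\circ$, summing over $\bc^\circ$ yields $X v_\ba = v_\bb + (\text{higher})$, as required.

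The main technical point, and the most delicate part of the argument, is this iterated application of Lemma~\ref{not} to propagate the higher terms cleanly through successive $X''$-moves; it works precisely because the $n$-th coordinate of the main state automatically plays the role of the "last active" position required by that lemma at every step of $X''$.
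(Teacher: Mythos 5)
Your proof is correct and follows essentially the same route as the paper's: induction on $n$, choosing the last entry $a_n$ small enough that the generators from the first $n-1$ stages annihilate the last tensor factor, then raising that factor from $a_n$ to $b_n$ by a chain of $f$'s while controlling the error terms via Lemma~\ref{not}. The only differences are cosmetic: you take $a_n$ generically very negative rather than using the paper's explicit greatest-integer recursion for $\ba$, and you spell out the iteration of Lemma~\ref{not} that the paper leaves implicit.
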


\begin{proof}
We first explain an explicit construction for $\ba$ and $X$.
Suppose we are given $\bb \in \B$.
Define $\ba \in \B$ by setting $a_1 := b_1$, then inductively defining 
each $a_s$ 
for $s=2,\dots,n$ 
to be the greatest integer such that $a_s \leq b_s$ and 
$a_s \leq \min(a_r-1,-b_r)$ for all $1
\leq r < s$.
It is clear from the definition of $\ba$ that $a_1 >
\cdots > a_n$.
Also for $1 \leq r < s \leq n$, we have that 
$a_r+a_s \leq b_r - b_r = 0$.
 Then take
$X = X_n \cdots X_2$ 
where $
X_s := f_{|b_s-1|} \cdots f_{|a_s+1|} f_{|a_s|}.
$

To show that $X v_\ba = v_\bb + \text{(a sum of higher $v_\bc$'s)}$,
we proceed by induction on $n$, the result being trivial in case $n=1$.
For $n > 1$, let  
$\bar\ba := (a_1,\dots,a_{n-1})$, $\bar\bb
:= (b_1,\dots,b_{n-1})$
and 
$\bar X := X_{n-1} \cdots X_2$.
Applying the induction hypothesis in the $\mathfrak{sp}_{2\infty}$-module
$V^{\otimes(n-1)}$, we get that
$\bar X v_{\bar\ba} = v_{\bar\bb} + 
(\text{a sum of $v_{\bar\bc}$'s
for $\bar\bc \succ \bar\bb$}).
$
Now we observe that if $f_i$ is a Chevalley generator appearing in one of the monomials $X_r$ for
$r < n$
then $i \neq \pm a_n$, hence, $f_i v_{a_n} = 0$.
Letting
 $\tilde \bb := (b_1,\dots,b_{n-1}, a_n)$, we deduce that
$\bar X v_{\ba} = v_{\tilde\bb} + 
(\text{a sum of $v_{\tilde\bc}$'s
for $\tilde\bc \succ \tilde\bb$}).$
Finally we act with $X_n$, which sends $v_{a_n}$ to $v_{b_n}$, and
apply Lemma~\ref{not}.
\end{proof}

\subsection{Canonical basis}\label{notabasis}
So far, we have introduced the following tensor product modules
over various Lie algebras $\mathfrak{s}$:
$$
\begin{array}{|l|c|r|r|}
\hline
\mathfrak{s}&\text{Tensor space}&\text{Monomial basis}&\text{Canonical
  basis}\\\hline
\mathfrak{sl}_{\infty}&V^{\otimes\bsigma}&v_\bb^\bsigma\text{ for }\bb \in
\B_{\phantom{0}}&c_\bb^\bsigma\text{ for }\bb \in \B_{\phantom{0}}\\
\mathfrak{sl}_{+\infty}&V_0^{\otimes\bsigma}&v_\bb^\bsigma\text{ for }\bb \in
\B_0&\pr_0 c_\bb^\bsigma\text{ for }\bb \in \B_0\\\hline
\mathfrak{sp}_{2\infty}&V^{\otimes n}&v_\bb\text{ for }\bb \in
\B_{\phantom{k}}&c_\bb\text{ for }\bb \in \B_{\phantom{k}}\\
\mathfrak{sp}_{2k}&V_k^{\otimes n}&v_\bb\text{ for }\bb \in
\B_k&\pr_k c_\bb\text{ for }\bb \in \B_k\\
\hline
\end{array}
$$
In this subsection, we give meaning to the rightmost column of this
table by introducing some {\em
  canonical bases}, basically following a construction
of Lusztig from \cite[$\S$27.3]{Lubook}.

In each of the above cases, let $U_q \mathfrak{s}$ be the {\em quantized enveloping algebra}
associated to $\mathfrak{s}$ over the field $\Q(q)$ ($q$ an
indeterminate).
We denote the standard generators of $U_q \mathfrak{s}$ 
by $\{\dot e_i, \dot f_i, \dot k_i^{\pm}\:|\:i \in I\}$.
They are subject to the usual $q$-deformed Serre relations.
We view $U_q \mathfrak{s}$ as a Hopf algebra with comultiplication $\Delta$
defined from
\begin{equation*}
\Delta(\dot f_i) = 1 \otimes \dot f_i + \dot f_i \otimes \dot k_i,
\quad
\Delta(\dot e_i) = \dot k_i^{-1}\otimes\dot e_i + \dot e_i \otimes 1,
\quad
\Delta(\dot k_i) = \dot k_i \otimes \dot k_i.
\end{equation*}

The various minuscule representations introduced in $\S$\ref{par} all
have $q$-analogs; cf. \cite[$\S$5A.1]{Jantzen}. We will denote them by
decorating our earlier notation with a dot, so we have the
$\Q(q)$-vector spaces
$\dot V^{\pm}, \dot V^{\pm}_0, \dot V$ and $\dot V_k$ with bases
$\{\dot v_j^{\pm}\:|\:j \in \Z\}$,
$\{\dot v_j^{\pm}\:|\:j > 0\}$,
$\{\dot v_j\:|\:j \in \Z\}$ and
$\{\dot v_j\:|\:-k<j \leq k\}$, respectively.
The Chevalley generators $\dot f_i$ and $\dot e_i$ act on these bases
by the same formulae (\ref{e})--(\ref{fe}) as before, while the
diagonal action is given explicitly by
\begin{align*}
\dot k_i \dot v_j^+ &= q^{\delta_{i,j}-\delta_{1+i,j}} \dot v_j^+,
&\dot k_i \dot v_j^- &= q^{\delta_{1+i,j}-\delta_{i,j}} \dot
v_j^-,\end{align*}
for the $\mathfrak{sl}$ cases, or
$$
\dot k_i \dot v_j = q^{\delta_{i,j}+\delta_{-i,j}  - \delta_{1+i,j}
  -\delta_{1-i,j}} \dot v_j
$$
for $\mathfrak{sp}$.
Taking tensor products, we obtain the modules
$\dot V^{\otimes\bsigma}, \dot V^{\otimes\bsigma}_0, \dot V^{\otimes
  n}$ and $\dot V_k^{\otimes n}$, with their natural
monomial bases denoted now by
$\{\dot v_\bb^\bsigma\:|\:\bb \in \B\},
\{\dot v_\bb^\bsigma\:|\:\bb \in \B_0\},
\{\dot v_\bb\:|\:\bb \in \B\}$
and
$\{\dot v_\bb\:|\:\bb \in \B_k\}$, respectively.

In the infinite rank cases, we need to pass from the
$q$-tensor spaces just defined to completions in which
certain infinite sums of
the basis vectors
also make sense, as follows.

For $\mathfrak{sl}_\infty$, the completed tensor space is denoted
$\widehat{V}^{\otimes \bsigma}$. 
It is
the $\Q(q)$-vector space consisting of formal linear combinations
of the form $\sum_{\bb \in \B} p_\bb(q) \dot v^\bsigma_\bb$ for
rational functions $p_\bb(q) \in \Q(q)$
such that the {\em support} $\{\bb\in\B\:|\:p_\bb(q) \neq 0\}$
is contained in a finite union of sets of the form $\{\bb \in
\B\:|\:\WT^\bsigma(\bb) \succeq \bbeta\}$ for $\bbeta \in P^n$
(working with the inverse dominance order for $\mathfrak{sl}_\infty$).
This definition is justified in
\cite[Lemma 8.1]{BD}.
For $\mathfrak{sl}_{+\infty}$, exactly the
same procedure gives a completion $\widehat{V}^{\otimes \bsigma}_0$
of $\dot V^{\otimes\bsigma}_0$, which embeds naturally into
$\widehat{V}^{\otimes \bsigma}$. Also, as in (\ref{pr0}),
there is a projection
\begin{equation*}
\pr_0:
\widehat{V}^{\otimes\bsigma}
\twoheadrightarrow \widehat{V}^{\otimes\bsigma}_0,
\qquad
\sum_{\bb \in \B} p_\bb(q) \dot v_\bb^\bsigma \mapsto
\sum_{\bb \in \B_0} p_\bb(q) \dot v_\bb^\bsigma,
\end{equation*}
which is left inverse to the inclusion
$\inc_0:\widehat{V}^{\otimes\bsigma}_0 \hookrightarrow \widehat{V}^{\otimes\bsigma}$.

For $\mathfrak{sp}_{2\infty}$, we define
the completion $\widehat{V}^{\otimes
  n}$ of $\dot V^{\otimes n}$ in an analogous way, replacing the
$\mathfrak{sl}_\infty$-Bruhat order by the
$\mathfrak{sp}_{2\infty}$-Bruhat order.
So it is the $\Q(q)$-vector space consisting of formal linear combinations
of the form $\sum_{\bb \in \B} p_\bb(q) \dot v_\bb$
whose support
is contained in a finite union of sets of the form $\{\bb \in
\B\:|\:\WT(\bb) \succeq \bbeta\}$ for $\bbeta \in P^n$ (working with
the inverse dominance order for $\mathfrak{sp}_{2\infty}$).
Just like in \cite[Lemma 8.1]{BD}, the action of $U_q
\mathfrak{sp}_{2\infty}$
on $\dot V^{\otimes n}$ extends to an action on $\widehat{V}^{\otimes
  n}$, and the completion still splits as the direct sum of its weight spaces.
The $U_q \mathfrak{sp}_{2k}$-module $\dot V^{\otimes n}_k$ embeds naturally into 
$\dot V^{\otimes n}$, hence,  its completion $\widehat{V}^{\otimes
  n}$. As in (\ref{prk}), we also have the projection
\begin{equation*}
\pr_k:
\widehat{V}^{\otimes n}
\twoheadrightarrow \dot{V}^{\otimes n}_k,
\qquad
\sum_{\bb \in \B} p_\bb(q) \dot v_\bb \mapsto
\sum_{\bb \in \B_k} p_\bb(q) \dot v_\bb,
\end{equation*}
which is left inverse to the inclusion
$\inc_k:\dot{V}^{\otimes n}_k \hookrightarrow \widehat{V}^{\otimes n}$.

The projection (\ref{prsigma}) carries over to the present setting too:
there is a
$U_q\mathfrak{sl}_{+\infty}$-module homomorphism
\begin{equation*}
\pr_\bsigma:
\widehat{V}^{\otimes n}
\twoheadrightarrow \widehat{V}^{\otimes \bsigma}_0,
\qquad
\sum_{\bb \in \B} p_\bb(q) \dot v_\bb \mapsto
\sum_{\bb \in \B_\bsigma} p_\bb(q) \dot v_{\bb'}^{\bsigma}
\end{equation*}
for $\bsigma \in \{\pm\}^n$.
It is left inverse to
$\inc_\bsigma:\widehat{V}_0^{\otimes\bsigma} \hookrightarrow \widehat{V}^{\otimes
  n},
\sum_{\bb \in \B_\bsigma} p_\bb(q) \dot v_{\bb'}^{\bsigma} \mapsto
\sum_{\bb \in \B_\bsigma} p_\bb(q) \dot v_\bb$.

The key point now is that there are canonical bar involutions
on each of the spaces
$\widehat{V}^{\otimes\bsigma}, \widehat{V}_0^{\otimes\bsigma},
\widehat{V}^{\otimes n}$ and $\dot V^{\otimes n}_k$, which we'll
denote by $\psi, \psi_0, \psi$ and $\psi_k$, respectively.
Each one is antilinear with respect to the field automorphism 
$\Q(q)\rightarrow \Q(q), q \mapsto q^{-1}$, it preserves weight
spaces, and it commutes with all $\dot f_i$ and $\dot e_i$.
The construction in finite rank is explained in 
\cite[$\S$27.3.1]{Lubook} using the quasi-$R$-matrix $\Theta$; note
for this due to our different choice of $\Delta$ compared to \cite{Lubook}
that Lusztig's $v$ is our $q^{-1}$. The approach in infinite rank is essentially the same; one
needs 
the completion so that the infinite sums that arise
still make sense.
In the next paragraph, we go through the details of the definition of
$\psi:\widehat{V}^{\otimes n}\rightarrow \widehat{V}^{\otimes n}$
in the case of
$\mathfrak{sp}_{2\infty}$. The constructions for
$\mathfrak{sl}_{\infty}$
and $\mathfrak{sl}_{+\infty}$ are entirely analogous; see also
\cite[Lemma 8.2]{BD}.

So consider $\widehat{V}^{\otimes n}$.
Proceeding by induction on $n$, we set $\psi(\dot v_j) = \dot v_j$ for
each $j \in \Z$.
For $n > 1$, 
we assume that the analog $\bar\psi$ of $\psi$ on the space
$\widehat{V}^{\otimes(n-1)}$
has already been defined by induction.
Letting
$\bar\bb$ denote the $(n-1)$-tuple
$(b_1,\dots,b_{n-1})$, 
we define $\psi$ on $\widehat{V}^{\otimes n}$ by setting
\begin{equation}\label{bardef}
\psi\left({\sum_{\bb \in \B}} p_\bb(q) \dot v_\bb\right) := 
{\sum_{\bb \in \B}} p_\bb(q^{-1})
\,\Theta\!\left(\bar\psi(\dot v_{\bar\bb}\right) \otimes \dot
v_{b_n}).
\end{equation}
To better understand this expression, recall that the
quasi-$R$-matrix $\Theta$
is a formal sum of terms
$\Theta_\beta$
for $\beta \in \bigoplus_{i \in I} \N \alpha_i$, with $\Theta_0 = 1$ and
$\Theta_\beta \in (U^-_q \mathfrak{sp}_{2\infty})_{-\beta}\otimes (U_q^+
\mathfrak{sp}_{2\infty})_\beta$.
The only monomials in the generators of $U_q^+\mathfrak{sp}_{2\infty}$ that are non-zero
on $\dot v_j$
are of the form $\dot e_{|i|} \dot e_{|i+1|}\cdots \dot e_{|j-1|}$
for integers $i \leq j$. Hence, for any $v \in
\widehat{V}^{\otimes(n-1)}$ and $j \in \Z$, we have that
\begin{equation}\label{rhs}
\Theta\left(v\otimes \dot
v_j\right)
= v\otimes \dot
v_j+
\sum_{i < j} \left(\Theta_{i,j} v \right) \otimes \dot v_{i}
\end{equation}
for $\Theta_{i,j} \in 
(U^-_q
\mathfrak{sp}_{2\infty})_{-(\alpha_{|i|}+\alpha_{|i+1|}+\cdots+\alpha_{|j-1|})}$.
Each $\Theta_{i,j}$ lies in Lusztig's $\Z[q,q^{-1}]$-form
for $U_q^- \mathfrak{sp}_{2\infty}$
by the integrality of the quasi-$R$-matrix established in \cite[Corollary 24.1.6]{Lubook}.
Applying these remarks to (\ref{bardef}) and using induction, we deduce 
$\psi(\dot v_{\bb})$ equals $\dot v_{\bb}$
plus a $\Z[q,q^{-1}]$-linear combination of $\dot v_{\ba}$'s for
$\ba \succ \bb$, which is a well-defined element of
$\widehat{V}^{\otimes n}$.
The formula (\ref{bardef}) also makes sense for arbitrary sums 
$\sum_{\bb \in \B} p_\bb(q) \dot v_\bb$
due to the interval-finiteness of the inverse dominance order
on $P^n$.
Finally, to see that $\psi$ commutes with the
actions of all $\dot f_i$ and $\dot e_i$, and that it is an
involution, one argues as in \cite[$\S$27.3.1]{Lubook}.

As the following lemma shows, the various bar involutions we have defined are closely related.

\begin{lemma}\label{compatible}
The following diagrams commute:
$$
\begin{CD}
\widehat{V}^{\otimes \bsigma}_0 &@>\psi_0>>&\widehat{V}^{\otimes\bsigma}_0\\
@V\inc_0VV&&@AA\pr_0 A\\
\widehat{V}^{\otimes \bsigma} &@>>\psi>&\widehat{V}^{\otimes\bsigma}
\end{CD}\quad,
\qquad
\begin{CD}
\dot{V}^{\otimes n}_k &@>\psi_k>>&\dot{V}^{\otimes n}_k\\
@V\inc_kVV&&@AA\pr_k A\\
\widehat{V}^{\otimes n} &@>>\psi>&\widehat{V}^{\otimes n}
\end{CD}\quad,\qquad
\begin{CD}
\widehat{V}^{\otimes \bsigma}_0 &@>\psi_0>>&\widehat{V}^{\otimes\bsigma}_0\\
@V\inc_\bsigma VV&&@AA\pr_\bsigma A\\
\widehat{V}^{\otimes n} &@>>\psi>&\widehat{V}^{\otimes n}
\end{CD}\quad.
$$
\end{lemma}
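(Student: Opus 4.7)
The plan is to prove all three statements by induction on $n$, unwinding the recursive formula~\eqref{bardef} for the ambient bar involutions and tracking which components of the quasi-$R$-matrix $\Theta$ survive the projection.

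The base case $n=1$ is immediate, as each bar involution fixes every basis vector. For the inductive step in the second diagram, the crucial structural point is that for each $0 \leq i < k$ the Chevalley generators $\dot f_i, \dot e_i$ of $U_q\mathfrak{sp}_{2\infty}$ annihilate every $\dot v_j$ with $j \notin (-k, k]$, since the conditions $j = \pm i$ or $j = 1 \pm i$ force $j \in (-k, k]$. Consequently $\pr_k$ intertwines the action of these generators, so every monomial in them preserves both the subspace $\dot V_k^{\otimes m}$ and the span of its complementary basis. Fixing $\bb \in \B_k$ and expanding
\begin{equation*}
\pr_k \psi \inc_k(\dot v_\bb) = \pr_k\,\Theta\bigl(\bar\psi(\dot v_{\bar\bb}) \otimes \dot v_{b_n}\bigr)
\end{equation*}
via~\eqref{rhs}, the only surviving components are the $\Theta_{i,b_n}$ with $-k < i < b_n \leq k$ (because $i > -k$ is required for the last tensor factor to escape $\pr_k$, and $\Theta_{i, b_n}$ preserves $\dot V_k^{\otimes(n-1)}$ and its complement on the first $n-1$ factors). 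These surviving components lie in $U^-_q\mathfrak{sp}_{2k} \otimes U^+_q\mathfrak{sp}_{2k}$ and assemble into $\Theta^{(k)}$ by the uniqueness characterization of the quasi-$R$-matrix. Combined with the inductive hypothesis $\pr_k \bar\psi \inc_k = \bar\psi_k$, this yields $\pr_k \psi \inc_k(\dot v_\bb) = \Theta^{(k)}(\bar\psi_k(\dot v_{\bar\bb}) \otimes \dot v_{b_n}) = \psi_k(\dot v_\bb)$.

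The first diagram is handled by the identical argument with $\mathfrak{sl}_{+\infty} \hookrightarrow \mathfrak{sl}_\infty$ replacing $\mathfrak{sp}_{2k} \hookrightarrow \mathfrak{sp}_{2\infty}$: the generators $\dot f_i, \dot e_i$ with $i \geq 1$ annihilate $\dot v_j^{\pm}$ for $j \leq 0$, so the corresponding analysis of $\Theta_{i,j}$ shows that only components lying in $U^-_q\mathfrak{sl}_{+\infty} \otimes U^+_q\mathfrak{sl}_{+\infty}$ survive $\pr_0$. For the third diagram, we use Lemmas~\ref{tummy} and~\ref{restrict} to view $V^{\otimes n}$ as $\bigoplus_\bsigma V_0^{\otimes\bsigma}$ as $\mathfrak{sl}_{+\infty}$-modules, and check which $\Theta_{i, b_n}$ in the $\mathfrak{sp}_{2\infty}$ quasi-$R$-matrix survive $\pr_\bsigma$: the last tensor factor $\dot v_i$ lies in the correct summand $V_0^{\sigma_n}$ only when $i$ has the same sign as $b_n$ (both $>0$ or both $\leq 0$), and in that case all of the indices $|i|, |i+1|, \ldots, |b_n-1|$ are $\geq 1$, placing $\Theta_{i, b_n}$ in $U^-_q\mathfrak{sl}_{+\infty} \otimes U^+_q\mathfrak{sl}_{+\infty}$. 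The remainder of the argument then proceeds as in the second diagram.

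The main technical point across all three cases is the ``recognition'' statement that the surviving diagonal components of the ambient quasi-$R$-matrix assemble into the quasi-$R$-matrix of the relevant sub-Kac-Moody algebra; this follows from the standard uniqueness characterization of $\Theta$. The chief obstacle I anticipate is the sign bookkeeping for the third diagram, where one must verify that the ``crossing-zero'' components of $\Theta$ (those involving the non-$\mathfrak{sl}_{+\infty}$ generator $\dot f_0$) are indeed all killed by $\pr_\bsigma$---but this is precisely what the sign-matching condition above guarantees.
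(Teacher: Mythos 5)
Your proposal is correct and follows essentially the same route as the paper's (one-line) proof: the key fact in both is that the quasi-$R$-matrix of the subalgebra ($\mathfrak{sl}_{+\infty}$ or $\mathfrak{sp}_{2k}$) consists of the same components $\Theta_\beta$ of the ambient quasi-$R$-matrix for $\beta$ in the subalgebra's positive root lattice, while the remaining components are killed by $\pr_0$, $\pr_k$ or $\pr_\bsigma$. You simply make explicit, by induction on $n$ through the recursion (\ref{bardef}) and the expansion (\ref{rhs}), the bookkeeping that the paper leaves implicit, which is a perfectly sound elaboration rather than a different argument.
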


\begin{proof}
In each case, this follows because the quasi-$R$-matrix $\Theta$
used to define the bottom map is a sum of the form
$\sum_{\beta} \Theta_\beta$ for $\beta$ in the positive root lattice
of $\mathfrak{sl}_\infty$ or $\mathfrak{sp}_{2\infty}$, while the
quasi-$R$-matrix used to define the top map
is a sum of the same $\Theta_\beta$'s for $\beta$ taken from the
positive root lattice of the subalgebra
$\mathfrak{sl}_{+\infty}$ or $\mathfrak{sp}_{2k}$.
\end{proof}

Now we can introduce the {\em canonical basis} for each of our completed
tensor spaces.
In each case, the bar involution maps the monomial basis vector
indexed by $\bb$ to itself plus a $\Z[q,q^{-1}]$-linear combination of
monomial basis vectors indexed by strictly larger 
$\ba$'s in the appropriate Bruhat order.
Then we apply
``Lusztig's Lemma''
as in the proof of \cite[Theorem 27.3.2]{Lubook}: the
canonical basis vector indexed by $\bb$ is the unique bar-invariant
vector that is equal to the monomial basis vector indexed by $\bb$
modulo a 
$q \Z[q]$-linear combination of other monomial basis vectors.
Our notation for the canonical basis in each case is explained in the next two paragraphs.

For $\mathfrak{sl}_\infty$, we denote the canonical basis for
$\widehat{V}^{\otimes \bsigma}$ as just defined by $\{\dot c_\bb^\bsigma\:|\:\bb \in
\B\}$. So, $\dot c_{\bb}^{\bsigma}$ is the unique vector fixed by
$\psi$ such that
\begin{equation}\label{pilly}
\dot c_{\bb}^{\bsigma} = \sum_{\ba \in \B}
d^\bsigma_{\ba,\bb}(q) \dot v_{\ba}^\bsigma
\end{equation}
for polynomials $d^\bsigma_{\ba,\bb}(q)$ with $d^\bsigma_{\bb,\bb}(q) = 1$,
$d^\bsigma_{\ba,\bb}(q) = 0$ unless $\ba \succeq \bb$, and $d^\bsigma_{\ba,\bb}(q) \in
q \Z[q]$ if $\ba \succ \bb$.
These polynomials have a natural representation theoretic
interpretation discussed in detail in \cite[$\S$5.9]{BLW}.
They are some finite type A parabolic Kazhdan-Lusztig polynomials (suitably
normalized),
hence, all of their coefficients are non-negative.
Moreover, each $\dot c_\bb^\bsigma$
is always a {\em finite} sum of $\dot v^\bsigma_\ba$'s,
i.e. $\dot c^\bsigma_\bb \in \dot V^{\otimes \bsigma}$
before completion.
We will not introduce any new notation for the canonical basis of
$\widehat{V}^{\otimes\bsigma}_0$ in the
$\mathfrak{sl}_{+\infty}$-case, because by the first diagram from Lemma~\ref{compatible} it is simply
the projection $\{\pr_0 \dot c_\bb^\bsigma\:|\:\bb \in \B_0\}$ of the
basis just defined.

Moving on to our notation for $\mathfrak{sp}_{2\infty}$, the
canonical basis for $\widehat{V}^{\otimes n}$ is
$\{\dot c_\bb\:|\:\bb \in \B\}$.
We have that 
\begin{equation}\label{polly}
\dot c_\bb = \sum_{\ba\in \B} d_{\ba,\bb}(q) 
\dot v_\ba
\end{equation}
for polynomials $d_{\ba,\bb}(q) \in \Z[q]$
with $d_{\bb,\bb}(q) = 1$, $d_{\ba,\bb}(q) = 0$ unless $\ba \succeq
\bb$,
and $d_{\ba,\bb}(q) \in q \Z[q]$ if $\ba \succ \bb$.
Unlike in the previous paragraph, the polynomials $d_{\ba,\bb}(q)$ may have
negative coefficients;
see Example~\ref{tsuchioka} below.
Consequently, it is conceivable that some $\dot c_\bb$'s
might fail to be {finite} sums of $\dot v_\ba$'s,
but this seems unlikely to us.
In view of the second diagram from Lemma~\ref{compatible},
the canonical basis for $\dot V^{\otimes n}_k$ is the projection
$\{\pr_k \dot c_\bb\:|\:\bb \in \B_k\}$.

The following lemma is an equivalent formulation of \cite[Proposition 4.1]{CKW}.

\begin{lemma}\label{ckwp}
For $\bb \in \B_\bsigma$, we have that
$\pr_\bsigma \dot c_{\bb} = \pr_0 \dot c_{\bb'}^\bsigma$.
Hence, $d_{\ba,\bb}(q) = d_{\ba',\bb'}^\bsigma(q)$
for all $\ba,\bb \in B_\bsigma$. 
\end{lemma}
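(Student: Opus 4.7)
The plan is to deduce the lemma directly from the uniqueness characterization of canonical bases, using the two key inputs provided by Lemma~\ref{compatible} (that $\pr_\bsigma$ intertwines the bar involutions $\psi$ and $\psi_0$) and Lemma~\ref{threw} (that $\bb \mapsto \bb'$ is a poset isomorphism from $(\B_\bsigma,\preceq)$ to $(\B_0,\preceq_\bsigma)$).

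First I would apply $\pr_\bsigma$ to the expansion (\ref{polly}) of $\dot c_\bb$. Since $\pr_\bsigma$ annihilates $\dot v_\ba$ for $\ba \notin \B_\bsigma$ and sends $\dot v_\ba$ to $\dot v_{\ba'}^\bsigma$ for $\ba \in \B_\bsigma$, we obtain
\[
\pr_\bsigma \dot c_\bb \;=\; \sum_{\ba \in \B_\bsigma} d_{\ba,\bb}(q)\, \dot v_{\ba'}^{\bsigma}.
\]
Because $\bb \in \B_\bsigma$, the term with $\ba = \bb$ contributes $\dot v_{\bb'}^\bsigma$ with coefficient $1$, and any other surviving $\ba$ satisfies $\ba \succ \bb$ with $\ba \in \B_\bsigma$; by Lemma~\ref{threw} this translates into $\ba' \succ_\bsigma \bb'$ in the $\mathfrak{sl}_{+\infty}$-Bruhat order on $\B_0$. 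Moreover the coefficient $d_{\ba,\bb}(q)$ lies in $q\Z[q]$ in this case.

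Next I would invoke the third diagram of Lemma~\ref{compatible}, which gives $\pr_\bsigma \circ \psi = \psi_0 \circ \pr_\bsigma$. Since $\dot c_\bb$ is bar-invariant, it follows that $\pr_\bsigma \dot c_\bb$ is $\psi_0$-invariant. Combined with the previous paragraph, $\pr_\bsigma \dot c_\bb$ is an element of $\widehat{V}_0^{\otimes\bsigma}$ that is $\psi_0$-invariant and equal to $\dot v_{\bb'}^\bsigma$ modulo a $q\Z[q]$-linear combination of $\dot v_{\ba'}^\bsigma$'s for $\ba' \succ_\bsigma \bb'$. By ``Lusztig's Lemma'' applied inside $\widehat{V}_0^{\otimes\bsigma}$, the canonical basis vector $\pr_0 \dot c_{\bb'}^{\bsigma}$ is the unique such element. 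Hence $\pr_\bsigma \dot c_\bb = \pr_0 \dot c_{\bb'}^\bsigma$, which is the first assertion.

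For the second assertion, I would expand the right-hand side using (\ref{pilly}):
\[
\pr_0 \dot c_{\bb'}^\bsigma \;=\; \sum_{\ba_0 \in \B_0} d_{\ba_0,\bb'}^\bsigma(q)\, \dot v_{\ba_0}^\bsigma.
\]
Comparing with the formula for $\pr_\bsigma \dot c_\bb$ obtained above and using that $\ba \mapsto \ba'$ is a bijection $\B_\bsigma \stackrel{\sim}{\rightarrow} \B_0$, equating coefficients of $\dot v_{\ba'}^\bsigma$ yields $d_{\ba,\bb}(q) = d_{\ba',\bb'}^\bsigma(q)$ for every $\ba \in \B_\bsigma$. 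There is no substantial obstacle here: the proof is essentially a bookkeeping exercise, with all the real content packaged into Lemmas~\ref{compatible} and~\ref{threw}. The only mild subtlety is the need to know that the $\bsigma$-restricted projection of the Bruhat filtration on $\widehat{V}^{\otimes n}$ lands inside the Bruhat filtration on $\widehat{V}_0^{\otimes\bsigma}$, but this is exactly the content of Lemma~\ref{threw}.
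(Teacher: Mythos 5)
Your proposal is correct and follows essentially the same route as the paper: apply $\pr_\bsigma$, use $\pr_\bsigma \dot v_\bb = \pr_0 \dot v_{\bb'}^\bsigma$ together with the third diagram of Lemma~\ref{compatible} (bar-involution compatibility), and conclude by the uniqueness characterization of the canonical basis, with Lemma~\ref{threw} handling the translation of the triangularity condition. The paper's proof is just a terser statement of exactly these ingredients.
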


\begin{proof}
As $\pr_\bsigma \dot v_{\bb} = \pr_0 \dot v_{\bb'}^{\bsigma}$,
 this follows using the third diagram from Lemma~\ref{compatible}.
\end{proof}

The vectors $c_\bb^\bsigma$ and
$c_\bb$ displayed in the table at the beginning of the subsection
refer to the
specializations of $\dot c_{\bb}^{\bsigma}$ and $\dot c_\bb$ at $q=1$.

\subsection{An algorithm}
In \cite[$\S$8]{BD}, we described an algorithm to compute the
canonical basis $\{\dot c_\bb^\bsigma\:|\:\bb \in \B\}$ for the $U_q \mathfrak{sl}_\infty$-module
$\widehat{V}^{\otimes\bsigma}$. To conclude the subsection, we work instead
with $U_q \mathfrak{sp}_{2\infty}$, and
describe
an analogous algorithm to compute the canonical basis 
$\{\dot c_\bb\:|\:\bb \in \B\}$
for $\widehat{V}^{\otimes
  n}$.
The algorithm goes by induction on $n$.
In case $n=1$, we have that $\dot c_\bb = \dot v_\bb$ always.
If $n > 1$, we begin by recursively 
computing $\dot c_{\bar\bb} \in \widehat{V}^{\otimes (n-1)}$, where
$\bar\bb$ denotes $(b_1,\dots,b_{n-1})$ as usual.
It is a linear combination of $\dot v_{\bar\ba}$'s for $\bar\ba \succeq \bar\bb$.
Then we define $j$ to be
 the greatest integer such that $j \leq b_n$, and $j \leq -|a_r|$
for all
$1 \leq r < n$ and all tuples
$\bar\ba = (a_1,\dots,a_{n-1})$ such that $\dot v_{\bar\ba}$ occurs
with non-zero coefficient in the expansion of $\dot c_{\bar\bb}$.

\begin{lemma}\label{lastofyear}
In the above notation, we have that
$\Theta\left(\dot c_{\bar \bb} \otimes \dot
v_j\right) = \dot c_{\bar\bb} \otimes \dot v_j$.
\end{lemma}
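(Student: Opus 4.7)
The plan is to read off $\Theta(\dot c_{\bar\bb}\otimes\dot v_j)$ from formula (\ref{rhs}), which gives
\[
\Theta(\dot c_{\bar\bb}\otimes\dot v_j)
= \dot c_{\bar\bb}\otimes \dot v_j + \sum_{i<j} (\Theta_{i,j}\dot c_{\bar\bb})\otimes \dot v_i,
\]
and then to show that every correction term vanishes; hence it suffices to verify that $\Theta_{i,j}\dot c_{\bar\bb}=0$ for each $i<j$.

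Set $M:=\max\{|a_r|\,:\,1\le r<n,\ \dot v_{\bar\ba}\text{ appears in }\dot c_{\bar\bb}\}$, which is finite by the inductive hypothesis that the algorithm produces $\dot c_{\bar\bb}$ as a finite linear combination. The defining property of $j$ gives $j\le -M$, so for any $i<j$ every integer $r$ with $i\le r\le j-1$ satisfies $r\le -M-1<0$, whence $|r|=-r\ge 1-j\ge M+1$. Since $\Theta_{i,j}$ lies in $(U_q^-\mathfrak{sp}_{2\infty})_{-(\alpha_{|i|}+\cdots+\alpha_{|j-1|})}$, every monomial expression for $\Theta_{i,j}$ is a product of Chevalley generators $\dot f_k$ with $k\in\{|r|\,:\,i\le r\le j-1\}$, and all such $k$ are strictly greater than $M$.

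Finally, for any such $k>M$ the formula (\ref{fe}) gives $\dot f_k\dot v_{a_r}=0$ for every single-tensor factor $\dot v_{a_r}$ of $\dot v_{\bar\ba}$, since $\dot f_k\dot v_{a_r}\neq 0$ would force $|a_r|=k>M$. Expanding the iterated coproduct $\Delta$ shows $\dot f_k\dot v_{\bar\ba}=0$, and because the rightmost generator in any monomial acts on $\dot v_{\bar\ba}$ first, each monomial of $\Theta_{i,j}$ sends $\dot v_{\bar\ba}$ to $0$. Therefore $\Theta_{i,j}\dot v_{\bar\ba}=0$, and by linearity $\Theta_{i,j}\dot c_{\bar\bb}=0$, as required. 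The only delicate point is the finiteness of $M$, which is guaranteed by the induction built into the algorithm.
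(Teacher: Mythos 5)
Your argument is correct and is essentially the paper's own proof: you expand $\Theta(\dot c_{\bar\bb}\otimes\dot v_j)$ via (\ref{rhs}) and observe that, by the choice of $j$, every Chevalley generator $\dot f_{|r|}$ with $i\le r\le j-1$ annihilates each $\dot v_{\bar\ba}$ occurring in $\dot c_{\bar\bb}$, so all correction terms $\Theta_{i,j}\dot c_{\bar\bb}$ vanish. The boundedness of $M$ that you flag at the end is precisely the assumption already implicit in the paper's definition of $j$ (which presupposes a greatest such integer exists), so your extra care there introduces no discrepancy with the paper's argument.
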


\begin{proof}
As in (\ref{rhs}), we have that
$\Theta\left(\dot c_{\bar \bb} \otimes \dot
v_j\right) = \dot c_{\bar\bb} \otimes \dot v_j
+ \sum_{i < j} \left(\Theta_{i,j} \dot c_{\bar\bb} \right) \otimes \dot v_i$,
where
$\Theta_{i,j}$ is a linear combination of non-trivial monomials
in $\dot f_{|j-1|}, \dot f_{|j-2|},\dots,\dot f_{|i|}$.
By the definition of $j$, all of these generators act as zero on $\dot c_{\bar\bb}$.
\end{proof}

Lemma~\ref{lastofyear} shows
that the vector 
$\dot c_{\bar \bb} \otimes \dot v_j\in
\widehat{V}^{\otimes n}$ is fixed by $\psi$.
Hence, so too is 
$\dot f_{|b_n-1|}\cdots \dot f_{|j+1|} \dot f_{|j|} 
\left(\dot c_{\bar \bb} \otimes \dot v_j\right)$.
By Lemma~\ref{not}, this new vector equals $\dot v_\bb$ plus a $\Z[q,q^{-1}]$-linear combination of $\dot
v_{\ba}$'s for $\ba \succ \bb$. If all but its leading coefficient lie in $q \Z[q]$, it is
already the desired vector $\dot c_{\bb}$. 
Otherwise, one picks $\ba
\succ \bb$ minimal so that the $\dot v_\ba$-coefficient is not in $q
\Z[q]$, then subtracts a bar-invariant multiple of the recursively
computed vector $\dot c_\ba$ to remedy this defficiency. Continuing in
this way, we finally obtain a bar-invariant vector with all of the required
properties to be
$\dot c_{\bb}$.

\begin{example}
The canonical basis of $V^{\otimes 2}$ consists of the following
vectors:
\begin{align*}
\dot v_i \otimes \dot v_j&\quad\text{for }i \geq j\text{ with }i+j \neq 1,\\
\dot v_i \otimes \dot v_j + q \dot v_j \otimes \dot v_i&\quad\text{for }i < j\text{ with
}i+j \neq 1,\\
\dot v_i \otimes \dot v_{1-i} + q \dot v_{1+i} \otimes \dot v_{-i}&\quad\text{for }i >
0,\\
\dot v_i \otimes \dot v_{1-i} 
+q \dot v_{i+1} \otimes \dot v_{-i}
+q \dot v_{-i} \otimes \dot v_{i+1}
+q^2
\dot v_{1-i} \otimes \dot v_i
&\quad\text{for }i < 0,\\
\dot v_0 \otimes \dot v_1 + q^2 \dot v_1 \otimes \dot v_0.&
\end{align*}
\end{example}

We refer the reader to {\tt http://pages.uoregon.edu/brundan/papers/C.gap}
for some
{\sc Gap} code implementing this algorithm.
Using it, we have independently verified the next examples, which were discovered originally by Tsuchioka:

\begin{example}\label{tsuchioka}
If $\ba = (1,1,0,1,0,0)$ and $\bb = (-1,2,-1,2,-1,2)$
then
$$
d_{\ba,\bb}(q) = q^7 + 4 q^5+ 3 q^3 - q.
$$
If $\ba = ( 1, -1, 2, -1, 2, 0 )$
and $\bb = (-1, -2, 3, -2, 3, 2
)$ then $$
d_{\ba,\bb}(q) = 8q^3-q.
$$ 
These examples demonstrate that positivity
fails in this situation.
\end{example}

\subsection{Crystals}\label{scr}
To conclude the section, we recall the explicit combinatorial description of
the crystal associated to the $\mathfrak{sp}_{2\infty}$-module
$V^{\otimes n}$. 
Later in the article, we will give a
representation-theoretic interpretation of this structure; see $\S$\ref{cryssec}.
The case of the $\mathfrak{sl}_\infty$-module $V^{\otimes\bsigma}$ can be treated
entirely similarly on replacing
$\iSIG(\bb)$ with $\iSIG^\bsigma(\bb)$; its representation-theoretic
significance is discussed e.g. in \cite[$\S$2.10]{BLW}.

The set underlying the crystal that we need is the set $\B$ that
parametrizes our various bases for $V^{\otimes n}$.
Its weight decomposition $\B = \bigsqcup_{\gamma \in P}
\B_\gamma$ is defined by setting
\begin{equation*}
\B_\gamma := \left\{ \bb \in \B  \:\big|\:\,|\!\WT(\bb)| =
  \gamma\right\}.
\end{equation*}
We need to introduce crystal operators
$$
\tilde f_i:\B_\gamma \rightarrow
\B_{\gamma-\alpha_i}\sqcup\{\varnothing\},
\qquad
\tilde e_i:\B_\gamma \rightarrow \B_{\gamma+\alpha_i} \sqcup
\{\varnothing\}
$$
for each $\gamma \in P$ and $i \in I$.
These arise naturally by 
iterating Kashiwara's tensor product rule, and may be computed
as follows.
Take $\bb \in \B_\gamma$.  Starting from 
the \textit{$i$-signature} $\iSIG(\bb)$ from (\ref{newisig}),
we define the \textit{reduced $i$-signature} by replacing pairs of entries
of the form  $\mathtt{e} \mathtt{f}$ (possibly separated by
$\bullet$'s) with $\bullet$'s, 
until all $\mathtt{e}$ entries appear to the right of
the entries $\mathtt{f}$.
Then define $\tilde f_i \bb$ to be 
$\bb + \bd_r$ if 
the rightmost $\mathtt{f}$
in the reduced $i$-signature appears in position $r$,
or
$\varnothing$ if there are no $\mathtt{f}$'s remaining in the reduced $i$-signature.  Similarly, define $\tilde e_i \bb$ to be 
$\bb - \bd_s$ if 
the leftmost $\mathtt{e}$
in the reduced $i$-signature appears in position $s$,
or
$\varnothing$ if there are no $\mathtt{e}$'s present. 

\begin{example} Take $\bb = (2,-1,-1,4,-2,-2,3,2,-2)$.  The $2$-signature of $\bb$ is the tuple
$(\mathtt{f, e, e, \bullet, f, f, e, f,f})$. The reduced
2-signature is $(\mathtt{f,\bullet, \bullet, \bullet, \bullet, \bullet, \bullet,
  \bullet, f})$.
Hence, $\tilde{f}_2\bb = \bb + \bd_9 = (2,-1,-1,4,-2,-2,3,2,-1)$
and
$\tilde{e}_2 \bb = \varnothing$.
\end{example}

Let $\B^\circ$ denote the 
set of all elements of $\B$ which can be obtained from $\bz = (0,\dots,0)$ by applying a sequence
of crystal operators.  In other words, $\B^\circ$ is the
\textit{connected component} of the crystal $\B$ containing $\bz$.  

\begin{lemma} \label{conncomp}
We have that $\bb \in \B^\circ$ if and only if $\bb$ is 
{\em antidominant}
in the sense that $b_1 \leq \cdots \leq b_n$. 
\end{lemma}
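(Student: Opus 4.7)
I would prove the two inclusions separately, handling the forward direction by direct case analysis and the converse by an inductive construction.

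For $\B^\circ \subseteq \{\text{antidominant tuples}\}$, since $\bz$ is antidominant it suffices to show that $\tilde{f}_i$ and $\tilde{e}_i$ each preserve antidominance. Fix antidominant $\bb$ and $i \geq 1$. The ordering $-i < 1-i < i < 1+i$, combined with $b_1 \leq \cdots \leq b_n$, forces the non-$\bullet$ entries of $\iSIG(\bb)$ to form the left-to-right block pattern $\mathtt{f}^a \mathtt{e}^b \mathtt{f}^c \mathtt{e}^d$, where $a, b, c, d$ count the positions with values $-i, 1-i, i, 1+i$ respectively. After cancelling $\mathtt{e}\mathtt{f}$-pairs, the surviving $\mathtt{f}$'s lie in specific value-classes at their rightmost positions, so $\tilde{f}_i$ either sends some $-i$-entry to $1-i$ or some $i$-entry to $1+i$. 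In each case, the next entry of $\bb$ is strictly larger than the original value, hence at least the incremented value, so the weakly increasing condition is preserved. A parallel check handles $\tilde{e}_i$, and the $i = 0$ case is essentially the same (without any reduction).

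For the converse inclusion, I would proceed by induction on $n$; the base case $n = 1$ is immediate from the explicit single-chain structure of the crystal of $V$ (namely $\cdots v_{-1} \xrightarrow{\tilde{f}_1} v_0 \xrightarrow{\tilde{f}_0} v_1 \xrightarrow{\tilde{f}_1} v_2 \cdots$). For $n > 1$, given antidominant $\bb = (b_1, \dots, b_n)$, write $\bb = (\bar\bb, b_n)$ with $\bar\bb$ antidominant in $\B_{n-1}$. By induction, a sequence of crystal operators takes $\bar\bz$ to $\bar\bb$. Viewing $V^{\otimes n}$ as $V^{\otimes(n-1)} \otimes V$ and using Kashiwara's tensor product rule, I would lift this path to $\B_n$ acting on the left tensor factor so as to reach $(\bar\bb, 0)$ or another suitable intermediate, and then use operators acting preferentially on the right tensor factor to set the last component to $b_n$.

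The main obstacle is that natural monotone measures like $\sum |b_r|$ fail to decrease under individual crystal operators, so a direct monotone induction is not available. Instead, the sufficiency argument requires careful case-by-case routing of Kashiwara's tensor rule, at each step distinguishing whether the operator acts on the left or right factor, and interleaving corrective operators when necessary. An alternative approach would appeal to the analogous $\mathfrak{sl}_\infty$-case from \cite{BD} through the $\mathfrak{sl}_{+\infty}$-decomposition of Lemma~\ref{restrict}, transferring the type A conclusion to type C via the compatibilities in Lemma~\ref{compatible}.
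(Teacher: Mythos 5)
Your forward direction is fine and is essentially the paper's own argument: the cancellation convention (cancelling $\mathtt{e}\mathtt{f}$ and not $\mathtt{f}\mathtt{e}$) forces the selected $\mathtt{f}$ to be the last position in its value class, so applying $\tilde f_i$ (or $\tilde e_i$) preserves $b_1\leq\cdots\leq b_n$.

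The gap is in the converse, which is where the content of the lemma lies. Your plan — induct on $n$, lift a path for $\bar\bb$ so that it acts on the first $n-1$ entries, reach $(\bar\bb,0)$, then raise the last entry by operators ``acting preferentially on the right factor'' — is not carried out, and you concede it needs ``careful case-by-case routing'' and ``corrective operators when necessary'' without specifying them. This is not a routine verification; it is the proof. Concretely, take $\bb=(0,2)$, so $\bar\bb=(0)=\bar\bz$ and your scheme reduces to raising the last entry from $0$ to $2$: applying $\tilde f_0$ to $(0,0)$ gives $(0,1)$, but then $\tilde f_1(0,1)=\varnothing$, because the entry $0$ in position $1$ contributes an $\mathtt{e}$ to the $1$-signature which cancels the $\mathtt{f}$ coming from position $2$. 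A weight count (the weight difference is $-\alpha_0-\alpha_1$) shows that no sequence of $\tilde f_i$'s alone reaches $(0,2)$ from $\bz$; one is forced to disturb the earlier entries and undo the damage with $\tilde e$'s, e.g. $(0,0)\to(0,1)\to(1,1)\to(1,2)\to(0,2)$, the last step being $\tilde e_0$. So the ``corrective operators'' are unavoidable, and exhibiting and controlling them is exactly what is missing from the proposal. Your fallback via Lemma~\ref{restrict} does not repair this: that lemma is a decomposition of $\mathfrak{sl}_{+\infty}$-modules, the type A crystal operators (those with $i>0$) preserve each sector $\B_\bsigma$ and never involve $i=0$, so type A connectedness gives strictly smaller components and says nothing about connectedness under the full type C operator set, where the $i=0$ operators are essential for moving between sectors. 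For comparison, the paper argues the converse by writing down an explicit monomial in the crystal operators, processing the entries of $\bb$ in a carefully chosen order; some argument at that level of explicitness (or your induction with the detours made precise) is required before the converse can be considered proved.
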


\begin{proof}
For the forward implication, we observe that whenever $\ba \in \B$ is antidominant, then so are
$\tilde{f}_i \ba$ and $\tilde{e}_i \ba$. 
For instance,
to check that the entries of $\tilde{f}_i \ba$ are weakly increasing,
we have that $\tilde{f}_i \ba = \ba + \bd_r$ where $r$ is the maximal
index for which the reduced $i$-signature of $\ba$ contains an 
$\mathtt{f}$.  
We need to see that $a_r < a_{r+1}$.
Well, otherwise, we would have that $a_r = a_{r+1}$, in which case
$\isig_r(\ba) = \isig _{r+1}(\ba) = \mathtt{f}$.  Because we cancel
$\mathtt{ef}$ pairs (and not $\mathtt{fe}$!) 
it would then follow that the reduced $i$-signature of $\ba$ contains
a $\mathtt{f}$ in its $(r+1)$th entry, which contradicts our assumption
about $r$.

Conversely, suppose that $b_1 \leq \cdots \leq b_n$.  For every index $r$, define a monomial
$$
\widetilde{x}_r := \left\{ \begin{array}{ll}  \tilde{f}_{b_r - 1} \cdots \tilde{f}_1\tilde{f}_0 & \text{ if $b_r \geq 0$} \\
							 \tilde{e}_{-b_r} \cdots \tilde{e}_2\tilde{e}_{1} & \text{ if $b_r < 0$.}
							 \end{array} \right.
$$
Letting $t$ denote the maximal index for which $b_t < 0$, taking $t :=
0$ in case $b_r \geq 0$ for all $r$, one then
checks that
$\widetilde{x}_t \cdots \widetilde{x}_2 \widetilde{x}_1
\widetilde{x}_{t+1}\widetilde{x}_{t+2} \cdots \widetilde{x}_n \bz = \bb.$
\end{proof}

Similarly, 
one can make the subset $\B_k \subset \B$ into an
$\mathfrak{sp}_{2k}$-crystal.  
The connected component of $\B_k$ containing $\bz$ is $\B_k^\circ :=\B_k
\cap \B^\circ$. It is also the connected component containing
\begin{equation}\label{zk}
\bz_k := (1-k,\dots,1-k) \in \B_k.
\end{equation}
This is significant because the vector $v_{\bz_k}$ is a highest weight
vector in $V_k^{\otimes n}$. 
Its weight $|\WT(\bz_k)|$ is $-n \eps_{k-1}$.

\section{Category $\O$}

Next, we introduce the supercategory $\sO$ of representations of the
Lie superalgebra $\mathfrak{q}_n(\K)$ that is the main object of
study of this article. Then, we prove our main categorification
theorem, which asserts that $\sO$ splits as $\O \oplus \Pi \O$ with
$\O$ being a tensor product categorification of the $\mathfrak{sp}_{2\infty}$-module
$V^{\otimes n}$. The proof of this theorem is similar to the proof of a
similar assertion for type A blocks from \cite{BD}.

\subsection{Superalgebra}
We will work from now on over the ground field $\K$.
A {\em vector superspace} is a $\Z/2$-graded vector space $V = V_\0
\oplus V_\1$.
We
denote the parity of a homogeneous vector $v \in V$
by $|v| \in \Z/2$.
Any $v \in V$ has a canonical
decomposition $v = v_\0 + v_\1$ with $|v_p|=p$.
Let $\underline{\SVec}$ be the category of vector superspaces and
parity-preserving linear maps. It is symmetric monoidal 
with braiding $u \otimes v \mapsto (-1)^{|u||v|} v \otimes u$.
Then, we make the following definitions following \cite{BE}:
\begin{itemize}
\item
A {\em supercategory} is a $\underline{\SVec}$-enriched category.
\item
 A {\em superfunctor} is a $\underline{\SVec}$-enriched functor.
\item
A \textit{supernatural transformation} $\eta : 
F \Rightarrow G$ 
between superfunctors
$F, G : \C \to \D$
is a family of morphisms 
$\eta_M = \eta_{M,\0} + \eta_{M,\1} : FM \to GM$ for each $M\in\ob \C$, such that
$\eta_{N,p}  \circ Ff = (-1)^{|f|p} Gf \circ \eta_{M,p}$
for every homogeneous morphism $f:M \rightarrow N$
in $\C$ and each $p \in \Z/2$.
\end{itemize}

For any supercategory $\C$, there is a supercategory
$\mathcal{E}\!nd(\C)$ consisting of superfunctors and supernatural
transformations.
It is a (strict) {\em monoidal supercategory} in the sense
of \cite[Definition 1.4]{BE}.
A {\em superequivalence} between supercategories $\C$
and $\D$ is a superfunctor $F:\C \to \D$ such that there exists
another superfunctor $G:\D \to \C$ with $GF:\C \to \C$ and $FG:\D \to \D$ being evenly
isomorphic to identity functors.

Given any $\K$-linear category $\C$, one can form the supercategory
$\C \oplus \Pi \C$ with objects being pairs $(V_1,V_2)$ of objects
from $\C$, and
morphisms 
$(V_1,V_2) \rightarrow (W_1,W_2)$ that are $2 \times
2$ matrices $f = 
\left( \begin{array}{cc} f_{11} & f_{12} \\ f_{21} & f_{22} \end{array} \right)$
of morphisms $f_{ij}\in \Hom_\C(W_j, V_i)$. The $\Z/2$-grading is
defined so $f_\0 = \left( \begin{array}{cc} f_{11} & 0 \\ 0 & f_{22} \end{array} \right)$
and $f_\1 = \left( \begin{array}{cc} 0 & f_{12} \\f_{21} & 0 \end{array}
\right)$.
We say that a supercategory {\em splits} if it is superequivalent to a
supercategory of this form.

Here is the basic example to keep in mind. 
Let $A = A_\0 \oplus A_\1$ be an associative superalgebra.
There is a supercategory $A\text{-}\SMod$
consisting of 
left $A$-supermodules.
Even morphisms in $A\text{-}\SMod$ are parity-preserving 
linear maps such that $f(av) = a f(v)$ for all $a \in A, v \in M$; odd morphisms are parity-reversing linear maps
such that $f(av) = (-1)^{|a|} a f(v)$ for homogeneous $a$.
If $A$ is purely even, i.e $A = A_\0$, then the category
$A\text{-}\SMod$ obviously splits as $A\text{-}\Mod\oplus \Pi(A\text{-}\Mod)$. In general,
$A\text{-}\SMod$ splits if and only if
$A$ is Morita superequivalent to
a purely even superalgebra.

\subsection{\boldmath Supercategory $\sO$}
We assume henceforth that we have fixed $n\geq 1$, and set $m :=
\lceil n/2\rceil$.
We are interested in a certain supercategory of representations of the Lie
superalgebra $\mathfrak{q}_n(\K)$, that is, the
subalgebra of the general linear Lie superalgebra
$\mathfrak{gl}_{n|n}(\K)$ consisting of matrices of the form
$\left( \begin{array}{c|c} A & B \\\hline B & A \end{array} \right)$.
In order to unify our treatment of odd versus even $n$ as much as
possible, we will adopt the same trick as used in \cite{BD}, setting
$$
\g = \g_\0 \oplus \g_\1:= \left\{
\begin{array}{ll}
\mathfrak{q}_n(\K)&\text{if $n$ is even},\\
\mathfrak{q}_n(\K)\oplus \mathfrak{q}_1(\K)&\text{if $n$ is odd.}
\end{array}
\right.
$$
The point of the additional $\mathfrak{q}_1(\K)$ in case $n$ is odd
is that it adjoins an extra odd involution to the supercategory $\sO$
to be defined shortly. In language from the introduction of
\cite{BD}, this amounts to working with the {\em Clifford twist} of
the supercategory that one would naturally define without this extra factor.

It will sometimes be helpful to identify $\g$ with a subalgebra of $\widehat\g := \mathfrak{gl}_{2m|2m}(\K)$.
Let
$x_{r,s}$ be the usual $rs$-matrix unit in $\widehat\g$,
which is even if $1 \leq r,s \leq 2m$ or $2m+1 \leq r,s \leq 4m$, and
odd otherwise.
Introduce the matrices
\begin{align}\label{a1}
e_{r,s} &:= x_{r,s} + x_{2m+r,2m+s},
&
e'_{r,s} &:= x_{r,2m+s}+x_{2m+r,s},\\
f_{r,s} &:= x_{r,s} - x_{2m+r,2m+s},
&
f'_{r,s} &:=  x_{r,2m+s} - x_{2m+r,s},\label{a2}\\
d_r&:= e_{r,r},
&d_r'&:= e_{r,r}'.
\end{align}
Then $\g$ is the subalgebra of $\widehat\g$ with basis 
$\{e_{r,s}, e_{r,s}'\:|\:1 \leq r,s \leq n\}$ together with
$\{d_{2m}, d_{2m}'\}$ if $n$ is odd.
The matrices $f_{r,s}$, $f_{r,s}'$ are elements of $\widehat\g$ but
not $\g$.
Let $\h = \h_\0 \oplus \h_\1$ be the Cartan subalgebra of $\g = \g_0\oplus\g_\1$ with basis
$\{d_r, d_r'\:|\:1 \leq r \leq 2m\}$.
Also let 
$\delta_1,\dots,\delta_{2m}$ be the basis for $\h_\0^*$ that is dual to
the basis $d_1,\dots,d_{2m}$ for $\h_0$.  
Finally, let $\b$ be the Borel subalgebra of $\g$
generated by $\h$ and the matrices
$\{e_{r,s}, e_{r,s}'\:|\:1 \leq r < s \leq n\}$.

As in the previous section, $\B$ will denote the set $\Z^n$ of
$n$-tuples $\bb = (b_1,\dots,b_n)$ of integers.
For $\bb \in \B$, 
let
$\lambda_\bb \in \h_\0^*$
be the weight defined from
\begin{equation}\label{wtdict2}
\lambda_\bb := 
\left\{
\begin{array}{ll}
\displaystyle\sum_{r=1}^n (b_r-\half)\delta_r&\text{if $n$ is
  even,}\\
\displaystyle\sum_{r=1}^n (b_r-\half)\delta_r  + \delta_{2m}&\text{if $n$ is
  odd.}
\end{array}
\right.
\end{equation}
Then we define $\sO$ to be the supercategory consisting of all 
$\g$-supermodules $M$ such that
\begin{itemize}
\item $M$ is finitely generated over $\g$;
\item $M$ is locally finite-dimensional over $\b$;
\item $M$ is semisimple over $\h_\0$ with all weights of the form
$\lambda_\bb$ for $\bb \in \B$.
\end{itemize}
We denote the usual {\em parity switching functor} by
$\Pi:\sO \rightarrow \sO$. This sends a supermodule $M$ to the
same vector space viewed as a superspace with $(\Pi M)_\0 := M_\1$ and
$(\Pi M)_\1 := M_\0$, and new action defined from $x \cdot v := (-1)^{|x|} xv$.

Let $\underline{\sO}$ be the underlying $\K$-linear category consisting of all of the
same objects as $\sO$, but only the even morphisms. 
The category $\underline{\sO}$ is obviously Abelian.
In fact, it is {\em Schurian} in following sense; 
this follows as in \cite[Lemma 2.3]{Btilt}.

\begin{definition}\label{schurian}
A $\CC$-linear category is {\em Schurian} if it is Abelian,
all of its objects are of finite
length, the endomorphism algebras of the irreducible objects are
one-dimensional,
and there are enough projectives and injectives.
\end{definition}

We proceed to introduce the Verma supermodules in $\sO$.
We need to do this rather carefully in order to be able
to distinguish a Verma supermodule from its parity flip.
Since we reserve the letter $i$ for elements of the set $I$ as in the 
previous section, we'll denote the canonical element of $\K$ by
$\sqrt{-1}$.
We also need to pick some distinguished square roots for each element of 
the subset $\Z+\half$ of $\K$
such that
\begin{equation}\label{needy}
\sqrt{i+\half}\sqrt{i-\half} = \sqrt{-i+\half}\sqrt{-i-\half}
\end{equation}
for each $i \in \N$.
For example, this can be done by 
letting $\sqrt{i+\half}$ denote the usual positive square root when $i
\geq 0$, then setting
$\sqrt{i+\half} := (-1)^{i+1} \sqrt{-1} \sqrt{-i-\half}$
if $i < 0$.

\begin{lemma}
For each $\bb \in \B$, there is a unique (up to even isomorphism)
irreducible $\h$-supermodule $V(\bb)$ 
of weight $\lambda_\bb$
such that the element
$d_1'\cdots d_{2m}' \in U(\g)$ acts
on all even (resp. odd) vectors in $V(\bb)$ by multiplication by the
scalar
$c_\bb$ (resp. $-c_\bb$), where
\begin{equation}\label{y1}
c_\bb
:=
(\sqrt{-1})^m \sqrt{b_1-\half}\cdots \sqrt{b_n-\half}.
\end{equation}
Moreover, any $\h$-supermodule of weight $\lambda_\bb$
splits as a direct sum of copies of $V(\bb)$ and its parity flip $\Pi V(\bb)$.
\end{lemma}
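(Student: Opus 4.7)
The plan is to reduce the lemma to the representation theory of a finite-dimensional Clifford superalgebra. On any $\h$-supermodule $M$ of weight $\lambda_\bb$, each even generator $d_r$ of $\h_\0$ acts by the scalar $\mu_r := b_r - \half$ (or, when $n$ is odd and $r = 2m$, by $\mu_{2m} := 1$). A direct matrix calculation in $\widehat\g$ using (\ref{a1})--(\ref{a2}) shows that $(d_r')^2 = d_r$ and $d_r' d_s' + d_s' d_r' = 0$ for $r \neq s$ inside $U(\h)$. Hence the action of $\h$ on $M$ factors through the quotient algebra $A_\bb$ generated by the odd elements $d_1',\ldots,d_{2m}'$ subject to $(d_r')^2 = \mu_r$ and $d_r' d_s' = -d_s' d_r'$ for $r \neq s$. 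This $A_\bb$ is a Clifford superalgebra on $2m$ odd generators.

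Since every $\mu_r$ is a nonzero scalar in the algebraically closed field $\K$, rescaling each $d_r'$ by a fixed square root of $\mu_r^{-1}$ identifies $A_\bb$ with the standard Clifford superalgebra on $2m$ generators of square $1$. Because $2m$ is even, the latter is a simple associative superalgebra of type M, evenly isomorphic to the matrix superalgebra $M_{2^{m-1}|2^{m-1}}(\K)$. In particular, every $A_\bb$-supermodule is semisimple, and there are precisely two simple $A_\bb$-supermodules up to even isomorphism, interchanged by the parity functor $\Pi$. This already delivers the direct-sum decomposition asserted at the end of the lemma.

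To single out $V(\bb)$ from among these two candidates, I would exploit the even element $z := d_1' \cdots d_{2m}' \in A_\bb$. An elementary but careful computation with the anticommutation relations yields $z d_r' = -d_r' z$ for every $r$, together with $z^2 = (-1)^m \mu_1 \cdots \mu_{2m}$, which matches $c_\bb^2$ by the definition (\ref{y1}); the sign $(-1)^m$ is exactly what accounts for the factor $(\sqrt{-1})^m$ in $c_\bb$. Consequently $z$ must act as a scalar $\pm c_\bb$ on the even part of any simple $A_\bb$-supermodule and as the opposite scalar on the odd part, and the two simple classes are distinguished precisely by this sign. I would then define $V(\bb)$ to be the unique simple supermodule on which $z$ acts as $c_\bb$ on even vectors, so that its parity shift $\Pi V(\bb)$ is the other simple class. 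The main obstacle is just the careful sign bookkeeping in the computation of $z^2$; beyond that, the condition $c_\bb \neq 0$ (which is immediate from the nonvanishing of each $\mu_r$) ensures that $V(\bb)$ and $\Pi V(\bb)$ are genuinely non-isomorphic, even via odd morphisms.
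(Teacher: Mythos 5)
Your proposal is correct and follows essentially the same route as the paper, which simply notes (citing [BD, Lemma 2.1]) that $V(\bb)$ is constructed as an irreducible supermodule over a rank-$2m$ Clifford superalgebra; your reduction of weight-$\lambda_\bb$ $\h$-supermodules to that Clifford superalgebra, together with the type-M classification and the sign analysis of the volume element $z=d_1'\cdots d_{2m}'$, is exactly the argument being invoked, spelled out in detail.
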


\begin{proof}
This is similar to \cite[Lemma 2.1]{BD}.
The supermodule $V(\bb)$ may be constructed explicitly as there as an
irreducible supermodule over a Clifford superalgebra of  rank $2m$;
in particular, $\dim V(\bb) = 2^m$.
\end{proof}

For each $\bb \in \B$, we define the {\em Verma supermodule}
\begin{equation}
M(\bb) := U(\g) \otimes_{U(\b)} V(\bb),
\end{equation}
viewing $V(\bb)$ as a $\b$-supermodule via the natural surjection $\b
\twoheadrightarrow \h$.
It is obvious that this belongs to $\sO$.
Here we list some more basic facts.
\begin{itemize}
\item
The Verma supermodule $M(\bb)$ has a unique irreducible quotient
$L(\bb)$. The supermodules
$\{L(\bb)\:|\:\bb \in \B\}$ give
a complete set of representatives for the isomorphism classes of irreducible objects in
$\sO$. Moreover, $L(\bb)$ is not
evenly isomorphic to its parity flip.
\item
There is a duality $\star$ on $\sO$ 
such that $L(\bb)$ and
$L(\bb)^\star$ are evenly isomorphic for each $\bb \in \B$; 
cf.
\cite[Lemma 2.3]{BD}.
\item
If $\bb$ is both {\em dominant} in the sense that $b_1 \geq \cdots
\geq b_n$, and {\em typical}, meaning that $b_r+b_s \neq 1$ for
all $1 \leq r < s \leq n$, then $M(\bb)$ is projective;
cf. \cite[Lemma 2.4]{BD}.
\end{itemize}

Let $\sO^\Delta$ be the full subcategory of $\sO$ consisting of all
supermodules possessing a Verma flag,
i.e. for which there is a filtration $0 = M_0 \subset \cdots \subset M_l = M$ 
with sections $M_k / M_{k-1}$
that are isomorphic  to Verma supermodules.
As in \cite[Lemma 2.5]{BD}, the multiplicities $(M:M(\bb))$ and
$(M:\Pi M(\bb))$
of $M(\bb)$ and $\Pi M(\bb)$
in any Verma flag of $M\in \ob\sO^\Delta$ satisfy
\begin{align*}
(M:M(\bb)) &= \dim \Hom_{\sO}(M, M(\bb)^\star)_\0,\\
(M:\Pi M(\bb)) &= \dim \Hom_{\sO}(M, M(\bb)^\star)_\1.
\end{align*}
Moreover, if $M$ possesses a Verma flag, then so does any direct
summand of $M$.

\subsection{Special projective superfunctors}
Let $\widehat{U}$ be the natural $\widehat{\mathfrak{g}}$-supermodule
of column vectors with standard basis $u_1,\dots,u_{2m},
u_1',\dots,u_{2m}'$, so the unprimed vectors are even, the primed ones
are odd.
Let $\widehat{U}^*$ be its dual, with 
basis $\phi_1,\dots,\phi_{2m}, \phi_1',\dots,\phi_{2m}'$
that is dual to the basis $u_1,\dots,u_{2m}, u_1',\dots,u_{2m}'$. 
Then, let $U \subseteq \widehat{U}$ and $U^* \subseteq \widehat{U}^*$
be the $\g$-supermodules
with 
bases $u_1,\dots,u_n,u_1',\dots,u_n'$ and
$\phi_1,\dots,\phi_n,\phi_1',\dots,\phi_n'$, respectively.

It is easy to see that tensoring either with $U$ or with $U^*$ sends supermodules
in $\sO$ to supermodules in $\sO$. Hence, we have endofunctors
\begin{equation}\label{spc}
\sF := U \otimes -:\sO \rightarrow \sO,
\qquad
\sE := U^* \otimes -:\sO \rightarrow \sO.
\end{equation}
The superfunctors $\sF$ and $\sE$ are both left and right adjoint to
each other.
The canonical adjunction making $(\sE, \sF)$ into an adjoint pair 
is induced by the linear maps
$$
U^* \otimes U \rightarrow \K,\:
\phi \otimes u \mapsto \phi(u),
\qquad
\K \rightarrow U \otimes U^*,\:
1 \mapsto \sum_{r=1}^n (u_r \otimes \phi_r + u_r' \otimes \phi_r'),
$$
while the adjunction $(\sF, \sE)$
is induced by 
$$
U \otimes U^* \rightarrow \K,\,
u \otimes \phi \mapsto (-1)^{|\phi||u|}\phi(u),
\quad
\K \rightarrow U^* \otimes U,\,
1 \mapsto \sum_{r=1}^n (\phi_r \otimes u_r - \phi_r' \otimes u_r').
$$
As well as these adjunctions, 
there are even
supernatural transformations
$x:\sF \Rightarrow \sF$ and
$t:\sF^2 \Rightarrow \sF^2$, and an odd supernatural transformation
$c:\sF \Rightarrow \sF$, which are
defined on $M \in \ob \sO$ as follows:
\begin{itemize}
\item 
$x_M:U \otimes M \rightarrow U \otimes M$ 
is left multiplication by the tensor 
\begin{equation*}
\omega := 
\sum_{r,s=1}^n \left(f_{r,s}\otimes e_{s,r}  -
  f_{r,s}'\otimes e_{s,r}'\right)
\in \widehat{\g} \otimes \g,
\end{equation*}
which defines a $\g$-supermodule homomorphism by the proof of
\cite[Lemma 3.1]{BD};
\item
$t_M:U \otimes U \otimes M \rightarrow U \otimes U \otimes M$ 
sends
$u \otimes v \otimes m \mapsto (-1)^{|u||v|} v \otimes u
\otimes m$;
\item
$c_M: U \otimes M \rightarrow U \otimes M$
is left  multiplication by $\sqrt{-1} \,z' \otimes 1$
where
\begin{equation*}
z' := \sum_{t=1}^n f_{t,t}'
\in \widehat\g.
\end{equation*}
\end{itemize}
Similarly, there are supernatural transformations
$x^*:\sE \Rightarrow \sE, t^*:\sE^2 \Rightarrow \sE^2$ and
$c^*:\sE \Rightarrow \sE$: $x^*$ and $c^*$ are defined
similarly to $x$ and $c$ but with an additional sign, so they are given 
by left multiplication by $- \omega$ and by $-\sqrt{-1}\,z'
\otimes 1$, respectively; $t^*$ is defined using the braiding on
$\SVec$ in exactly the same
way as $t$.
One can check that $x^*, t^*$ and $c^*$ are both the left and right mates
of $x, t$ and $c$, respectively, with respect to the adjunctions fixed in the previous
paragraph; cf. \cite[Lemma 3.6]{BD}.

\begin{definition}\label{AHC}
The (degenerate) {\em affine Hecke-Clifford supercategory} $\AHC$ is the 
strict monoidal supercategory
with a single generating object $1$, even generating morphisms
$\mathord{
\begin{tikzpicture}[baseline = -1]
	\draw[-,thick,darkblue] (0.08,-.15) to (0.08,.3);
      \node at (0.08,0.08) {$\color{darkblue}\bullet$};
\end{tikzpicture}
}:1 \rightarrow 1$ and
$\mathord{
\begin{tikzpicture}[baseline = -1]
	\draw[-,thick,darkblue] (0.18,-.15) to (-0.18,.3);
	\draw[-,thick,darkblue] (-0.18,-.15) to (0.18,.3);
\end{tikzpicture}
}:1 \otimes 1 \rightarrow 1 \otimes 1$,
and an odd generating morphism
$\mathord{
\begin{tikzpicture}[baseline = -1]
	\draw[-,thick,darkblue] (0.08,-.15) to (0.08,.3);
      \node at (0.08,0.08) {$\color{darkblue}\circ$};
\end{tikzpicture}
}:1 \rightarrow 1$,
subject to the following relations:
\begin{align*}
\mathord{
\begin{tikzpicture}[baseline = 2]
	\draw[-, thick,darkblue] (0, 0.6) to (0, -0.3);
	\node at (0, -0.00) {$\color{darkblue} \circ$};
	\node at (0, 0.3) {$\color{darkblue} \bullet$};
\end{tikzpicture}
} &= 
- 
\mathord{
\begin{tikzpicture}[baseline = 2]
	\draw[-, thick,darkblue] (0, 0.6) to (0, -0.3);
	\node at (0, -0.0) {$\color{darkblue} \bullet$};
	\node at (0, 0.3) {$\color{darkblue} \circ$};
\end{tikzpicture}
}\:,
&\mathord{
\begin{tikzpicture}[baseline = 2]
	\draw[-, thick,darkblue] (0, 0.6) to (0, -0.3);
	\node at (0, -0.0) {$\color{darkblue} \circ$};
	\node at (0, 0.3) {$\color{darkblue} \circ$};
\end{tikzpicture}
}
&= 
\mathord{
\begin{tikzpicture}[baseline = 2]
	\draw[-, thick,darkblue] (0, 0.6) to (0, -0.3);
\end{tikzpicture}
}\:,
&
\mathord{
\begin{tikzpicture}[baseline = 8.5]
	\draw[-,thick,darkblue] (0.2,.4) to[out=90,in=-90] (-0.2,.9);
	\draw[-,thick,darkblue] (-0.2,.4) to[out=90,in=-90] (0.2,.9);
	\draw[-,thick,darkblue] (0.2,-.1) to[out=90,in=-90] (-0.2,.4);
	\draw[-,thick,darkblue] (-0.2,-.1) to[out=90,in=-90] (0.2,.4);
\end{tikzpicture}}
 &= 
\mathord{
\begin{tikzpicture}[baseline = 8.5]
	\draw[-,thick,darkblue] (0.1,-.1) to (0.1,.9);
	\draw[-,thick,darkblue] (-0.3,-.1) to (-0.3,.9);
\end{tikzpicture}
}\:,
\end{align*}\begin{align*}
\mathord{
\begin{tikzpicture}[baseline = 4]
	\draw[-,thick,darkblue] (0.25,.6) to (-0.25,-.2);
	\draw[-,thick,darkblue] (0.25,-.2) to (-0.25,.6);
      \node at (-0.14,0.42) {$\color{darkblue}\circ$};
\end{tikzpicture}
}
 &= 
\mathord{
\begin{tikzpicture}[baseline = 4]
	\draw[-,thick,darkblue] (0.25,.6) to (-0.25,-.2);
	\draw[-,thick,darkblue] (0.25,-.2) to (-0.25,.6);
      \node at (0.14,-0.02) {$\color{darkblue}\circ$};
\end{tikzpicture}
}
&
\mathord{
\begin{tikzpicture}[baseline = 4]
	\draw[-,thick,darkblue] (0.25,.6) to (-0.25,-.2);
	\draw[-,thick,darkblue] (0.25,-.2) to (-0.25,.6);
      \node at (-0.14,0.42) {$\color{darkblue}\bullet$};
\end{tikzpicture}
}
-
\mathord{
\begin{tikzpicture}[baseline = 4]
	\draw[-,thick,darkblue] (0.25,.6) to (-0.25,-.2);
	\draw[-,thick,darkblue] (0.25,-.2) to (-0.25,.6);
      \node at (0.14,-0.02) {$\color{darkblue}\bullet$};
\end{tikzpicture}
}
&= 
\mathord{
\begin{tikzpicture}[baseline = 4]
 	\draw[-,thick,darkblue] (0.08,-.2) to (0.08,.6);
	\draw[-,thick,darkblue] (-0.28,-.2) to (-0.28,.6);
\end{tikzpicture}
} 
- \!
\mathord{
\begin{tikzpicture}[baseline = 4]
 	\draw[-,thick,darkblue] (0.08,-.2) to (0.08,.6);
	\draw[-,thick,darkblue] (-0.28,-.2) to (-0.28,.6);
	\node at (0.08, 0.2) {$\color{darkblue}\circ$};
	\node at (-0.28, 0.2){$\color{darkblue}\circ$};
\end{tikzpicture}
}\:,
&
\mathord{
\begin{tikzpicture}[baseline = 3]
	\draw[-,thick,darkblue] (0.45,.8) to (-0.45,-.4);
	\draw[-,thick,darkblue] (0.45,-.4) to (-0.45,.8);
        \draw[-,thick,darkblue] (0,-.4) to[out=90,in=-90] (-.45,0.2);
        \draw[-,thick,darkblue] (-0.45,0.2) to[out=90,in=-90] (0,0.8);
\end{tikzpicture}
}
&=
\mathord{
\begin{tikzpicture}[baseline = 3]
	\draw[-,thick,darkblue] (0.45,.8) to (-0.45,-.4);
	\draw[-,thick,darkblue] (0.45,-.4) to (-0.45,.8);
        \draw[-,thick,darkblue] (0,-.4) to[out=90,in=-90] (.45,0.2);
        \draw[-,thick,darkblue] (0.45,0.2) to[out=90,in=-90] (0,0.8);
\end{tikzpicture}
}\,.
\end{align*}
(Here, we are using the string calculus for strict monoidal
supercategories as in \cite{BE}.)
\end{definition}

The following theorem is essentially 
\cite[Theorem 7.4.1]{HKS}; cf. \cite[Theorem 6.2]{BD}.
It is proved by explicitly checking the relations.

\begin{theorem}\label{hkst}
There is a strict monoidal superfunctor
$\Psi:\AHC \rightarrow \mathcal{E}nd(\sO)$
sending the generating object
$1$ to the endofunctor $\sF$, and the generating morphisms 
$\mathord{
\begin{tikzpicture}[baseline = -1]
	\draw[-,thick,darkblue] (0.08,-.15) to (0.08,.3);
      \node at (0.08,0.08) {$\color{darkblue}\bullet$};
\end{tikzpicture}
},
$
$\mathord{
\begin{tikzpicture}[baseline = -1]
	\draw[-,thick,darkblue] (0.18,-.15) to (-0.18,.3);
	\draw[-,thick,darkblue] (-0.18,-.15) to (0.18,.3);
\end{tikzpicture}
}$
and
$\mathord{
\begin{tikzpicture}[baseline = -1]
	\draw[-,thick,darkblue] (0.08,-.15) to (0.08,.3);
      \node at (0.08,0.08) {$\color{darkblue}\circ$};
\end{tikzpicture}
}$
to the
supernatural transformations
$x, t$ and $c$, respectively.
\end{theorem}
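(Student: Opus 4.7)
The plan is to construct $\Psi$ by setting $\Psi(1) := \sF$ and sending the three generating morphisms to $x$, $t$, and $c$ respectively, then to check that these images satisfy each of the defining relations of $\AHC$ in Definition~\ref{AHC}. Since $\AHC$ is the free strict monoidal supercategory on the listed generators modulo those relations, verifying the relations suffices to extend the assignment to a strict monoidal superfunctor. Thus the task reduces to six relation checks.

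First I would dispatch the easy relations. The relation $t^2 = \id$ is immediate from the definition of $t$ as the super-swap on $U \otimes U$ (the signs inherent in the super-braiding square to $+1$). The braid relation for $t$ on $\sF^3$ is a standard fact about the symmetric braiding on $\SVec$. The relation $c^2 = \id$ reduces to the computation $(\sqrt{-1}\, z' \otimes 1)^2 = -(z')^2 \otimes 1$ acting on $U \otimes M$, and one checks directly from $z' = \sum_t f'_{t,t}$ and the matrix units formulas \eqref{a1}--\eqref{a2} that $(z')^2$ acts as $-1$ on the generators $u_r$ and $u_r'$ of $U$. The anti-commutation $xc = -cx$ follows by noting that $\omega$ is an even element of $\widehat{\g} \otimes \g$ while $\sqrt{-1}\, z' \otimes 1$ is odd, and verifying that left multiplications by these two tensors super-commute to zero using a direct calculation with the matrix unit basis; this is analogous to the odd-dot slide of \cite[Lemma 3.6]{BD}.

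Next I would treat the two relations expressing how $c$ slides past the crossing: these state that the odd dot on either strand of $t$ moves to the opposite strand on the other side. Unwinding definitions, this is the assertion that $(\sqrt{-1}\, z' \otimes 1) \otimes 1_M$ and $1_U \otimes (\sqrt{-1}\, z' \otimes 1_M)$ are intertwined by the super-swap on $U \otimes U \otimes M$; the identity follows because $z'$ acts diagonally on $U$ and the super-swap is natural with respect to even endomorphisms of the tensor factors (with the requisite signs from odd parity of $z'$ accounted for by the super-braiding convention).

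The main obstacle is the affine Hecke-Clifford relation $tx - xt = \id - cc$ (the sixth relation of Definition~\ref{AHC}). Here I would compute $(t_M \circ (x \otimes 1_{\sF M})) - ((x \otimes 1_{\sF M}) \circ t_M)$ acting on $u \otimes v \otimes m$ by substituting the explicit formula $\omega = \sum_{r,s} (f_{r,s} \otimes e_{s,r} - f'_{r,s} \otimes e'_{s,r})$ and using the commutation of $\g$-action with the super-swap. The key algebraic identity to isolate is
\[
\omega \cdot (t \otimes 1) - (t \otimes 1) \cdot \omega = t \otimes 1 - (c \otimes c) \cdot (t \otimes 1) \cdot (c \otimes c)
\]
or an equivalent expression, after applying the anti-commutation relations among the $f_{r,s}, f'_{r,s}, e_{s,r}, e'_{s,r}$ (both within $\widehat{\g}$ and between $\widehat{\g}$ and $\g$). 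The $\id$ term comes from the diagonal contribution where the matrix units in $\omega$ contract against the swap, while the $cc$ term absorbs the additional signs forced by the odd primed generators $f'_{r,s}, e'_{s,r}$. This is precisely the content of \cite[Theorem 7.4.1]{HKS} in the degenerate limit and is formally analogous to the type A computation carried out in \cite[Theorem 6.2]{BD}; I would follow that template closely, the only difference being the book-keeping of the additional $f'$-terms in $\omega$ which are responsible for the $cc$ correction.
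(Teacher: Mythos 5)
Your proposal is correct and takes essentially the same route as the paper: the paper's proof consists precisely of the remark that the theorem is established by explicitly checking the relations of Definition~\ref{AHC} on the supernatural transformations $x$, $t$ and $c$, this being essentially \cite[Theorem 7.4.1]{HKS} (cf.\ \cite[Theorem 6.2]{BD}). Your relation-by-relation sketch, including reducing the main affine Hecke--Clifford relation to the computation with $\omega$ and the primed matrix units, is exactly the verification the paper delegates to those references.
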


The superfunctor $\Psi$ from Theorem~\ref{hkst} induces superalgebra
homomorphisms
\begin{equation}\label{tea2}
\Psi_d:AHC_d \rightarrow \End(\sF^d)
\end{equation}
for each $d \geq 0$, where
$AHC_d$ denotes the
(degenerate) {\em affine Hecke-Clifford superalgebra}
\begin{equation}\label{ahca}
AHC_d := \End_{\AHC}(1^{\otimes d}).
\end{equation}
These superalgebras were introduced originally in
\cite{N}, and can be understood
algebraically as follows.
Numbering the strings of a $d$-stringed diagram by $1,\dots,d$ from right to left,
let $x_r$ (resp. $c_r$) denote the element of
$AHC_d$ 
defined by a closed dot (resp. an open dot) on the $r$th
string. Let $t_r$ denote the crossing of the $r$th and $(r+1)$th
string.
The even elements $x_1,\dots, x_d$ commute, the odd elements
$c_1,\dots,c_d$ satisy the relations $c_r^2=1$ and $c_r c_s = -c_s c_r
\:(r \neq s)$ of the rank $d$ Clifford superalgebra $C_d$,
and $t_1,\dots,t_{d-1}$ satisfy the same relations as the
basic transpositions in the symmetric group $S_d$.
In fact, by the basis theorem for $AHC_d$ from \cite[$\S$2-k]{BK}, 
$x_1,\dots,x_d$ generate a copy of the polynomial algebra
$A_d := \K[x_1,\dots,x_d]$
inside $AHC_d$, while $c_1,\dots,c_d, t_1,\dots,t_{d-1}$ generate a
copy of the {\em Sergeev superalgebra}
$HC_d := S_d \ltimes C_d.$
Moreover, the natural multiplication map
$HC_d \otimes A_d \rightarrow AHC_d$ is an isomorphism of vector
superspaces.
We note also that
the
multiplication in $AHC_d$ satisfies the following:
\begin{align}\label{mirror1}
  f\, c_r &= c_r \,c_r(f),\\
 f \, t_r &= t_r\, t_r(f) + \partial_r(f)
+c_r c_{r+1} \tilde\partial_r(f),\label{mirror2}
\end{align}
for each $f \in A_d$. Here, the operators $c_r, t_r, \partial_r,\tilde\partial_r:A_d \rightarrow
A_d$ are defined as follows:
\begin{itemize}
\item $t_r$ is the automorphism that interchanges $x_r$ and
  $x_{r+1}$ and
fixes all other generators;
\item
$c_r$ is the automorphism that sends $x_r \mapsto -x_r$ and
fixes all other generators;
\item
$\partial_r$ is the Demazure operator $\partial_r(f) :=
\frac{t_r(f)-f}{x_r - x_{r+1}};
$
\item
$\tilde{\partial}_r$ is the {twisted Demazure operator} $c_{r+1}
\circ \partial_r \circ c_{r}$.
\end{itemize}

Next, we are going to decompose
$\sF$ and $\sE$ into generalized eigenspaces with respect to the
endomorphisms $x$ and $x^*$.
The key ingredient needed to understand this is the following, whose proof is
identical to that of \cite[Lemma 3.2]{BD}.

\begin{lemma} \label{eigenvalues} Suppose that $\bb \in \B$
and let $M := M(\bb)$.
\begin{enumerate}
\item 
There is 
a filtration $$
0 = M_0 \subset M_1 \subset \cdots \subset M_n = 
U \otimes M
$$ 
with
$M_{t} / M_{t-1} \cong M(\bb + \bd_t) \oplus \Pi M(\bb + \bd_t)$
for each $t=1,\dots,n$.  The endomorphism
$x_{M}$ preserves this filtration,
and the induced endomorphism
of the subquotient $M_t / M_{t-1}$ is diagonalizable
 with exactly two 
eigenvalues $\pm \sqrt{b_t+\half}\sqrt{b_t-\half}$.
Its $\sqrt{b_t+\half}\sqrt{b_t-\half}$-eigenspace is
 evenly isomorphic to $M(\bb + \bd_t)$,
while the other eigenspace is evenly isomorphic to 
$\Pi M(\bb + \bd_t)$.
\item 
There is 
a filtration 
$$
0 = M^{n} \subset \cdots \subset M^1 \subset M^0 = 
U^*
\otimes  M
$$ 
with
$M^{t-1} / M^{t} \cong M(\bb - \bd_t) \oplus \Pi M(\bb - \bd_t)$
for each $t=1,\dots,n$.  The endomorphism
$x^*_{M}$ preserves this filtration,
and the induced endomorphism
of the subquotient $M^{t-1} / M^{t}$ is diagonalizable
 with exactly two 
eigenvalues $\pm \sqrt{b_t-\half}\sqrt{b_t-\threehalves}$.
Its $\sqrt{b_t-\half}\sqrt{b_t-\threehalves}$-eigenspace is
 evenly isomorphic to $M(\bb - \bd_t)$,
while the other eigenspace is evenly isomorphic to 
$\Pi M(\bb - \bd_t)$.
 \end{enumerate}
\end{lemma}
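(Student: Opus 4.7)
The plan is to mimic the proof of the analogous type A statement [BD, Lemma 3.2], which hinges on constructing an explicit filtration of the tensor product by $\g$-submodules and then computing the action of $\omega$ on each subquotient directly. I will sketch part (a) in detail; part (b) is entirely parallel, using the dual basis $\phi_1,\ldots,\phi_n,\phi_1',\ldots,\phi_n'$ of $U^*$ and the sign-adjusted endomorphism $x^*_M = -\omega\cdot$, with the weight shift $+\bd_t$ replaced throughout by $-\bd_t$ and eigenvalues $\pm\sqrt{b_t-\half}\sqrt{b_t-\threehalves}$ in place of $\pm\sqrt{b_t+\half}\sqrt{b_t-\half}$.

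First I would construct the filtration. Fix a nonzero $m_\bb \in V(\bb) \subset M(\bb)$. For $t = 0,1,\ldots,n$, let $M_t$ denote the $U(\g)$-submodule of $U \otimes M$ generated by $\{u_s \otimes v,\, u_s' \otimes v : 1 \leq s \leq t,\ v \in V(\bb)\}$. Using the PBW theorem together with the observation that each raising generator of $\g$ sends $u_s \otimes m_\bb$ into the span of $u_r \otimes (\cdot)m_\bb$ and $u_r' \otimes (\cdot)m_\bb$ with $r \leq s$, one verifies that $0 = M_0 \subset M_1 \subset \cdots \subset M_n = U \otimes M$ is a well-defined filtration by $\g$-submodules, and that $M_t/M_{t-1}$ is generated as a $\g$-module by the images of $u_t \otimes m_\bb$ and $u_t' \otimes m_\bb$. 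These images span an $\h$-stable plane of weight $\lambda_{\bb+\bd_t}$, decomposing as $V(\bb+\bd_t) \oplus \Pi V(\bb+\bd_t)$; a standard universal-property argument then identifies $M_t/M_{t-1}$ with $M(\bb+\bd_t) \oplus \Pi M(\bb+\bd_t)$.

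Next I would analyze the induced endomorphism of $x_M$ on $M_t/M_{t-1}$. Because $\omega$ defines a $\g$-module map, $x_M$ preserves each $M_t$. Expanding $\omega(u_t \otimes m_\bb)$ and $\omega(u_t' \otimes m_\bb)$ and using that $f_{r,s}$ and $f_{r,s}'$ act on $u_t, u_t'$ only when $s = t$, one finds that terms with $r > t$ vanish (because $m_\bb$ is $\b$-singular) while terms with $r < t$ lie in $M_{t-1}$. Only the $r = t$ contributions survive modulo $M_{t-1}$, coming from $e_{t,t} = d_t$, which acts as the scalar $b_t - \half$, together with the odd Cartan element $e_{t,t}' = d_t'$, which acts via the Clifford structure on $V(\bb)$. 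This produces an explicit $2\times 2$ matrix for the induced endomorphism of $M_t/M_{t-1}$ in the basis coming from $u_t \otimes m_\bb$ and $u_t' \otimes m_\bb$; its characteristic polynomial, after applying the square-root convention (\ref{needy}), equals $X^2 - (b_t+\half)(b_t-\half)$, so the eigenvalues are $\pm\sqrt{b_t+\half}\sqrt{b_t-\half}$.

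To identify parities, note that each eigenspace is $\g$-stable and contains a highest weight vector of weight $\lambda_{\bb+\bd_t}$, hence is a direct sum of copies of $M(\bb+\bd_t)$ or $\Pi M(\bb+\bd_t)$. The action of $d_1'\cdots d_{2m}'$ on this highest weight line distinguishes the two via the Clifford scalar $c_{\bb+\bd_t}$ from (\ref{y1}), and an explicit computation shows that the $+\sqrt{b_t+\half}\sqrt{b_t-\half}$-eigenspace realizes the eigenvalue $+c_{\bb+\bd_t}$, so that summand is evenly isomorphic to $M(\bb+\bd_t)$ and the other to $\Pi M(\bb+\bd_t)$. The main obstacle will be the second step: keeping track of the Koszul signs in the action of $\omega$ and ensuring that the square-root normalization (\ref{needy}) produces the clean eigenvalues stated. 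Once this is settled, part (b) follows by running the same argument with $U^*$, the dual decreasing filtration $M^0 \supset M^1 \supset \cdots \supset M^n = 0$ generated by the $\phi_s, \phi_s'$, and the sign-reversed tensor $-\omega$.
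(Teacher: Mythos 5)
Your proposal takes the same route as the paper, which in fact gives no independent argument here: it simply notes that the proof of \cite[Lemma 3.2]{BD} carries over verbatim, and that proof is exactly the filtration-plus-$\omega$-computation you sketch (filtration induced from the obvious $\b$-stable flag on $U$ via the tensor identity, then the action of $\omega$ on the generators of each subquotient, and the Clifford element $d_1'\cdots d_{2m}'$ to pin down parities).

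One intermediate claim in your second step is stated too loosely, though, and as written it is false: for $r<t$ the individual terms $u_r\otimes e_{t,r}m_\bb$ and $u_r'\otimes e_{t,r}'m_\bb$ do \emph{not} lie in $M_{t-1}$. Indeed, since $u_r\otimes m_\bb\in M_{t-1}$ and $M_{t-1}$ is $\g$-stable, one has $u_r\otimes e_{t,r}m_\bb\equiv -u_t\otimes m_\bb\pmod{M_{t-1}}$, which is nonzero in $M_t/M_{t-1}$; similarly the odd term is congruent to $+u_t\otimes m_\bb$ up to the Koszul sign. What is true is that for each fixed $r<t$ the even and odd contributions cancel modulo $M_{t-1}$, so that only the $r=t$ terms survive. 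This cancellation is the type Q analogue of the vanishing $\rho$-shift and is precisely why the eigenvalues depend only on $b_t$ and not on $t$ or the remaining entries of $\bb$ (in the even analogue the $r<t$ terms genuinely contribute and shift the eigenvalue by $t-1$). With that correction, the $r=t$ terms give the two-dimensional action of $d_t$ and $d_t'$ you describe, whose square is $(b_t-\half)(b_t+\half)$ because $(d_t')^2=d_t$ and $d_t$ acts by $b_t-\half$, and the rest of your argument, including the parity identification and the parallel treatment of $U^*$ with $-\omega$, goes through as in \cite[Lemma 3.2]{BD}.
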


{\em For the remainder of the section}, we let $I$ denote the set $\N$. In the notation from the
previous section, this is the index set for
the simple roots of the Kac-Moody algebra $\mathfrak{s} =
\mathfrak{sp}_{2\infty}$.
Let
\begin{equation}
J :=
\left\{\pm \sqrt{i+\half}\sqrt{i-\half}\:\bigg|\:i \in I\right\}.
\end{equation}
This set is relevant due to the following lemma.

\begin{lemma}\label{minpoly}
For any $M \in \ob\sO$, all roots of the minimal polynomials of
$x_M$ and $x_M^*$
(computed
in the finite dimensional superalgebras
$\End_{\sO}(\sF\,M)$ and $\End_{\sO}(\sE\,M)$)
belong to the set $J$.
\end{lemma}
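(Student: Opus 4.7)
The plan is to bootstrap from Lemma~\ref{eigenvalues}, which computes all relevant eigenvalues on Verma supermodules, by means of a filtration argument applied to a projective cover.

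First I would verify the numerical fact that for every $b \in \Z$ the four scalars $\pm\sqrt{b+\half}\sqrt{b-\half}$ and $\pm\sqrt{b-\half}\sqrt{b-\threehalves}$ lie in $J$. If $b \geq 1$ we set $i := b-1 \in \N$, whereupon $\sqrt{b+\half}\sqrt{b-\half} = \sqrt{(i+1)+\half}\sqrt{(i+1)-\half} \in J$; if $b \leq 0$ we set $i := -b \in \N$ and use the consistency relation (\ref{needy}) to identify $\sqrt{b+\half}\sqrt{b-\half}$ with $\pm\sqrt{i+\half}\sqrt{i-\half} \in J$. The $x^*$-eigenvalues $\pm\sqrt{b-\half}\sqrt{b-\threehalves}$ are of the same form with $b$ replaced by $b-1$, so the same analysis applies. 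This is the one place where (\ref{needy}) is essential, and it is really the only combinatorial content in the lemma.

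Next I would reduce to the case of Verma-flagged objects. Since $\underline{\sO}$ is Schurian it has enough projectives, and standard BGG-type arguments (cf.\ the discussion of $\sO^\Delta$ immediately before the lemma) show that every projective object in $\sO$ has a Verma flag. Given $M \in \ob\sO$, pick a surjection $p: P \twoheadrightarrow M$ with $P$ projective. Applying the exact superfunctor $\sF$ yields a surjection $\sF p : \sF P \twoheadrightarrow \sF M$, and by naturality the square $\sF p \circ x_P = x_M \circ \sF p$ commutes. Thus the minimal polynomial of $x_M$ in $\End_\sO(\sF M)$ divides the minimal polynomial of $x_P$ in $\End_\sO(\sF P)$, so it suffices to show that every root of the latter lies in $J$.

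Finally I would handle $P$ itself. Fix a Verma flag $0 = P_0 \subset P_1 \subset \cdots \subset P_l = P$ with sections $P_t/P_{t-1} \cong M(\bb^{(t)})$ up to parity. By exactness of $\sF$ this induces a filtration $0 = \sF P_0 \subset \cdots \subset \sF P_l = \sF P$, and by naturality of $x$ the endomorphism $x_P$ preserves this filtration and acts on the $t$-th subquotient as $x_{M(\bb^{(t)})}$. By Lemma~\ref{eigenvalues}(1) together with the first step, each such subquotient is annihilated by
\[
q_t(z) := \prod_{r=1}^{n} \bigl(z - \sqrt{b^{(t)}_r+\tfrac12}\sqrt{b^{(t)}_r-\tfrac12}\bigr)\bigl(z + \sqrt{b^{(t)}_r+\tfrac12}\sqrt{b^{(t)}_r-\tfrac12}\bigr),
\]
a polynomial with all roots in $J$. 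A triangular induction on the length of the filtration then shows that the product $q_1(x_P) q_2(x_P) \cdots q_l(x_P)$ vanishes on $\sF P$, so the minimal polynomial of $x_P$ divides this product and hence has all roots in $J$. The same argument using Lemma~\ref{eigenvalues}(2) disposes of $x^*_M$.

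The argument is essentially a routine Verma-flag reduction; the only place any care is needed is the first step, where the ambiguous sign choices for the square roots must be reconciled using (\ref{needy}) in order to unify the $b \geq 1$ and $b \leq 0$ cases into a single index set $i \in I = \N$.
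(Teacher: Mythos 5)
Your proposal is correct in substance and follows essentially the same route as the paper: reduce to Verma supermodules and then combine Lemma~\ref{eigenvalues} with the compatibility (\ref{needy}) of the chosen square roots to see that all eigenvalues $\pm\sqrt{b+\half}\sqrt{b-\half}$ and $\pm\sqrt{b-\half}\sqrt{b-\threehalves}$ lie in $J$; your naturality-plus-filtration bookkeeping is a faithful expansion of the paper's one-line reduction. The one point you should repair is the justification of the reduction step: you invoke ``every projective object of $\sO$ has a Verma flag,'' citing the discussion of $\sO^\Delta$, but that discussion does not contain this fact --- in this paper it is Theorem~\ref{mainsplitthm2}, proved only later via the functors $\sF_j$, whose very definition depends on the decomposition (\ref{cc}) and hence on Lemma~\ref{minpoly}, so as written your reduction is circular within the paper's development. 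The fix is easy and does not require projectivity at all: since $M$ is finitely generated, locally finite over $\b$ and $\h_\0$-semisimple, it is a quotient of an induced module $U(\g)\otimes_{U(\b)}W$ for a finite-dimensional $\b$-stable subsuperspace $W$ generating $M$, and such an induced module has a Verma flag for elementary reasons (filter $W$ by irreducible $\b$-subquotients, which are the $V(\bb)$ or $\Pi V(\bb)$ with trivial nilradical action); with this surjection in place of the projective cover, your argument goes through verbatim.
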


\begin{proof}
This reduces to the case that $M$
is a Verma supermodule, when it follows from
Lemma~\ref{eigenvalues} and (\ref{needy}).
\end{proof}

For $j \in J$, let $\sF_j$ (resp.\ $\sE_j$) be the subfunctor of $\sF$
(resp. $\sE$) defined
by letting $\sF_j\, M$
(resp. $\sE_j\, M$) be the generalized $j$-eigenspace of
$x_M$ (resp. $x_M^*$)
for each $M \in \ob\sO$.
Lemma~\ref{minpoly} implies that
\begin{equation}\label{cc}
\sF = \bigoplus_{j \in J} \sF_j,
\qquad
\sE = \bigoplus_{j \in J} \sE_j.
\end{equation}
The adjunctions
$(\sE, \sF)$ and $(\sF, \sE)$ fixed earlier restrict to adjunctions
$(\sE_j, \sF_j)$ and $(\sF_j, \sE_j)$
for each $j \in J$; this follows because
$x^*$ is both the left and right mate of $x$.
Also, by Theorem~\ref{hkst}, 
$c$ restricts to an odd isomorphism 
$\sF_j \stackrel{\sim}{\Rightarrow} \sF_{-j}$ for each $j \in J$;
similarly, $\sE_j \cong \sE_{-j}$.

Recalling (\ref{cg}),
the following theorem reveals the first significant connection between
combinatorics in $\sO$ and the $\mathfrak{sp}_{2\infty}$-module
$V^{\otimes n}$.

\begin{theorem}\label{maintfthm2}
Given $\bb \in \B$ and $i \in I$,
let $j := 
\sqrt{i+\half}\sqrt{i-\half}$.
Then:
\begin{itemize}
\item[(1)]
$\sF_j \,M(\bb)$ (resp. $\sF_{-j}\,M(\bb)$) has a multiplicity-free 
filtration with sections that are 
evenly (resp. oddly) isomorphic to the Verma supermodules
$$
\{M(\bb+\bd_t)\:|\:\text{for }1 \leq t \leq n\text{ such that }\isig_t(\bb) =
\mathtt{f}\},
$$
appearing from bottom to top in order of increasing $t$.
\item[(2)]
$\sE_j \,M(\bb)$ (resp. $\sE_{-j}\,M(\bb)$) has a multiplicity-free 
filtration with sections that are 
evenly (resp. oddly) isomorphic to the Verma supermodules
$$
\{M(\bb-\bd_t)\:|\:\text{for }1 \leq t \leq n\text{ such that
}\isig_t(\bb) = \mathtt{e}\},
$$
appearing from top to bottom in order of increasing $t$.
\end{itemize}
\end{theorem}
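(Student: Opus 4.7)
The plan is to deduce the theorem directly from Lemma~\ref{eigenvalues} by intersecting the filtrations produced there with the generalized $\pm j$-eigenspaces of $x_M$ and $x_M^*$. The basic numerical observation is that $(b_t+\half)(b_t-\half) = (i+\half)(i-\half)$ is equivalent to $b_t = \pm i$, which by (\ref{newisig}) is precisely the condition $\isig_t(\bb) = \mathtt{f}$; similarly, $(b_t-\half)(b_t-\threehalves) = (i+\half)(i-\half)$ is equivalent to $b_t = 1 \pm i$, i.e.\ $\isig_t(\bb) = \mathtt{e}$. This already explains why only the indices $t$ with the stated $i$-signature contribute to the filtrations of $\sF_{\pm j}M(\bb)$ and $\sE_{\pm j}M(\bb)$.

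For part (1), I would start from the filtration $0 = M_0 \subset \cdots \subset M_n = \sF M(\bb)$ from Lemma~\ref{eigenvalues}(1). Since $x_M$ preserves this filtration, its generalized $j$-eigenspace inherits a filtration whose $t$-th subquotient is obtained by taking the generalized $j$-eigenspace of the induced action on $M_t/M_{t-1} \cong M(\bb+\bd_t)\oplus \Pi M(\bb+\bd_t)$. If $\isig_t(\bb) \neq \mathtt{f}$ the eigenvalues $\pm \sqrt{b_t+\half}\sqrt{b_t-\half}$ differ from $\pm j$, so this subquotient vanishes. If $\isig_t(\bb) = \mathtt{f}$, then by the convention around (\ref{needy})---in particular the explicit choice $\sqrt{i+\half} := (-1)^{i+1}\sqrt{-1}\sqrt{-i-\half}$ for $i<0$---the value $\sqrt{b_t+\half}\sqrt{b_t-\half}$ is exactly $+j$, in both the $b_t = i$ and $b_t = -i$ subcases. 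Consequently, the $M(\bb+\bd_t)$ summand contributes to $\sF_j M(\bb)$ while the $\Pi M(\bb+\bd_t)$ summand contributes to $\sF_{-j} M(\bb)$, yielding the required multiplicity-free filtration of $\sF_j M(\bb)$; the statement about $\sF_{-j}M(\bb)$ then follows from the odd isomorphism $\Pi M(\bb+\bd_t) \cong M(\bb+\bd_t)$.

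Part (2) is entirely parallel. Applying Lemma~\ref{eigenvalues}(2) to $\sE M(\bb)$ gives a filtration $0 = M^n \subset \cdots \subset M^0 = \sE M(\bb)$, on whose subquotients $x_M^*$ acts with eigenvalues $\pm \sqrt{b_t-\half}\sqrt{b_t-\threehalves}$. Repeating the same analysis with $b_t$ replaced by $b_t - 1$ shows that these eigenvalues equal $\pm j$ precisely for the $t$ with $\isig_t(\bb) = \mathtt{e}$, and that the signs attach the $M(\bb-\bd_t)$ summand to $\sE_j M(\bb)$ and the $\Pi M(\bb-\bd_t)$ summand to $\sE_{-j}M(\bb)$. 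The indexing $M^n \subset \cdots \subset M^0$ places the subquotient for index $t$ at the $(n-t+1)$-th layer from the bottom, which is exactly the top-to-bottom ordering in increasing $t$ asserted in the theorem.

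The main obstacle is the sign bookkeeping of the square roots in the cases $b_t = -i$ (for $\sF$) and $b_t = 1 - i$ with $b_t \leq 0$ (for $\sE$). For these one must use the explicit convention defining $\sqrt{i+\half}$ when $i<0$ to confirm that the eigenvalue is genuinely $+j$ and not $-j$; this is a direct but slightly fiddly case check. Once the sign in each subcase is verified, the remainder of the argument is a mechanical consequence of Lemma~\ref{eigenvalues}.
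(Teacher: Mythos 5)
Your proposal is correct and follows essentially the same route as the paper: intersect the filtrations of Lemma~\ref{eigenvalues} with the generalized eigenspaces, square the eigenvalue equation to get $b_t=\pm i$ (resp.\ $b_t=1\pm i$), and use the sign convention (\ref{needy}) to see that both subcases give eigenvalue exactly $+j$, matching the $i$-signature condition (\ref{newisig}). The only cosmetic difference is that the paper deduces the $\sF_{-j}$ (resp.\ $\sE_{-j}$) statement from the odd isomorphism $\sF_j\cong\sF_{-j}$ furnished by $c$, whereas you read it directly off the $-j$-eigenspaces in Lemma~\ref{eigenvalues}; both are fine.
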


\begin{proof}
(1) We just need to check the statement for $\sF_j\, M(\bb)$; the one
about
$\sF_{-j}\,M(\bb)$ then follows because it is isomorphic to
$\sF_j\,M(\bb)$ via an odd isomorphism.
Applying Lemma~\ref{eigenvalues}, we see that
$\sF_j \,M(\bb)$ has a multiplicity-free filtration with sections that
are evenly isomorphic to the supermodules
$M(\bb+\bd_t)$
for $t=1,\dots,n$ such that
$\sqrt{b_t+\half}\sqrt{b_t-\half} = j = \sqrt{i+\half}\sqrt{i-\half}$.
Squaring both sides, we deduce that
$b_t^2 = i^2$, hence,
$b_t = \pm i$. Both cases do indeed give
solutions thanks to (\ref{needy}). It remains to compare what we have
proved with the definition
of $i$-signature from (\ref{newisig}).

(2) Similar.
\end{proof}

Finally in this subsection, we introduce a completion
$\widehat{AHC}_d$ of the affine
Hecke-Clifford superalgebra $AHC_d$ from (\ref{ahca}), following
\cite[Definition 5.3]{KKT}.
As a vector superspace, we have that
\begin{equation}\label{completed}
\widehat{AHC}_d := HC_d \otimes \widehat{A}_d\qquad
\text{where}\qquad
\widehat{A}_d := \bigoplus_{\bj \in J^d} 
\K[[x_1-j_1,\dots,x_d-j_d]]1_\bj,
\end{equation}
and $J^d$ denotes the set of $d$-tuples $\bj = j_d\cdots j_1$ of
elements of $J$.
For $h \in HC_d$ and $f \in \K[[x_1-j_1,\dots,x_d-j_d]]$, we write simply $h f
1_\bj$ in
place of $h \otimes f 1_\bj$.
The multiplication in $\widehat{AHC}_d$
is defined so that $\widehat{A}_d$ is a subalgebra,
the maps $HC_d \hookrightarrow \widehat{AHC}_d,
h \mapsto h 1_\bj$ are algebra homomorphisms,
and,
extending (\ref{mirror1})--(\ref{mirror2}),
we have that:
\begin{align}\label{opaque1}
  (f 1_{\bj}) \, (c_r 1_{\bj'})
&= c_r\, c_r(f)  1_{c_r(\bj)} 1_{\bj'},\\\notag
(f 1_{\bj}) \, (t_r 1_{\bj'}) &=  t_r \, t_r(f) 1_{t_r(\bj)}1_\bj' 
+ \frac{t_r(f) 1_{t_r(\bj)} 
- f 1_\bj}{x_r-x_{r+1}}1_{\bj'}\\
&\qquad\qquad\qquad+c_r c_{r+1} \frac{
t_r(f)  1_{t_r(\bj)}- c_{r+1}(c_{r}(f))  1_{c_{r+1}(c_r(\bj))}}{x_r + x_{r+1}} 1_{\bj'}.\label{opaque2}
\end{align}
Let $\End(\sF^d)$ be the superalgebra of all supernatural
transformations $\sF^d \Rightarrow \sF^d$.
Since $(x-j)$ acts {\em locally nilpotently} on $\sF_j$,
i.e. it induces a nilpotent endomorphism of $\sF_j \,M$ for each $M \in
\ob \sO$, we can extend the homomorphism
$\Psi_d$ from (\ref{tea2}) uniquely to a homomorphism 
\begin{equation}\label{tea}
\widehat{\Psi}_d:\widehat{AHC}_d \rightarrow \End(\sF^d)
\end{equation}
such that $\widehat{\Psi}_d(1_\bj)$ is the projection of $\sF^d$ onto
its summand $\sF_{j_d}\,\cdots\,\sF_{j_1}$,
and $\widehat{\Psi}_d(a 1_\bj) = \Psi_d(a) \circ \widehat{\Psi}_d(1_\bj)$
for each $a \in AHC_d$.

\subsection{Indecomposable projectives}
In this subsection, we relate the $\mathfrak{sp}_{2\infty}$-Bruhat order $\preceq$ on $\B$
from $\S$\ref{bos} to the structure of the Verma supermodules in $\sO$.
Actually, it is better to work in terms of projectives, so
let $P(\bb)$ be a projective cover of $L(\bb)$ in $\underline{\sO}$.

\begin{theorem}\label{mainsplitthm2} 
The indecomposable projective supermodule $P(\bb)$ 
has a Verma flag with top section evenly isomorphic to $M(\bb)$
and other
sections 
evenly isomorphic to $M(\bc)$'s for $\bc \in \B$ with $\bc \succ \bb$.
\end{theorem}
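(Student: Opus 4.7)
The plan is to adapt the categorification argument from \cite{BD} for type A, using Lemma~\ref{construction2} as the combinatorial input. Given $\bb \in \B$, I would apply Lemma~\ref{construction2} to produce a \emph{dominant and typical} $\ba \in \B$ (so that $M(\ba)$ is projective) together with a monomial $X = f_{i_1} \cdots f_{i_\ell}$ in the Chevalley generators of $\mathfrak{sp}_{2\infty}$ such that
\[
X v_\ba \;=\; v_\bb + \sum_{\bc \succ \bb} m_\bc \, v_\bc
\]
in $V^{\otimes n}$. Setting $j_k := \sqrt{i_k + \half}\sqrt{i_k - \half}$ (positive square root throughout), I define the categorification $\widetilde P := \sF_{j_1} \cdots \sF_{j_\ell} M(\ba)$. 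Since each $\sF_{j_k}$ is an exact direct summand of $\sF$ biadjoint to $\sE_{j_k}$, it preserves projectivity, so $\widetilde P$ is projective.

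Iterated application of Theorem~\ref{maintfthm2}(1) will endow $\widetilde P$ with a Verma flag whose sections are evenly isomorphic to the Verma supermodules $M(\bc)$ indexed, with multiplicity $m_\bc$, by the monomial basis expansion of $X v_\ba$: in particular, $M(\bb)$ occurs exactly once and every other section is $M(\bc)$ for some $\bc \succ \bb$. I would arrange the ordering so that $M(\bb)$ sits at the top of this flag, invoking the structure of the last step $X_n = f_{|b_n-1|}\cdots f_{|a_n|}$ in the construction of Lemma~\ref{construction2}, which produces $M(\bb)$ from a strictly extremal section of the preceding filtration. Composing the resulting quotient $\widetilde P \twoheadrightarrow M(\bb)$ with $M(\bb) \twoheadrightarrow L(\bb)$ shows that $L(\bb)$ lies in the head of $\widetilde P$, so $P(\bb)$ is a direct summand of $\widetilde P$. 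By the fact recalled from \cite[Lemma 2.5]{BD}, $P(\bb)$ inherits a Verma flag whose sections are drawn from those of $\widetilde P$; lifting the surjection $P(\bb) \twoheadrightarrow L(\bb)$ through $M(\bb) \twoheadrightarrow L(\bb)$ by projectivity gives $P(\bb) \twoheadrightarrow M(\bb)$, so $(P(\bb):M(\bb)) \geq 1$, while the unique occurrence of $M(\bb)$ in $\widetilde P$ forces $(P(\bb):M(\bb)) = 1$. All remaining Verma sections are thus $M(\bc)$ for $\bc \succ \bb$, and the top section of any such flag must have head equal to $L(\bb)$, which forces it to be evenly isomorphic to $M(\bb)$.

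The main obstacle is parity bookkeeping: the odd isomorphism $c : \sF_j \stackrel{\sim}{\Rightarrow} \sF_{-j}$ (and its counterpart for $\sE$) means that $\sF_{\pm j}$ are interchangeable only up to parity flip. Working consistently with the $+\sqrt{\cdot}$ branch throughout is precisely what ensures the isomorphisms produced by Theorem~\ref{maintfthm2}(1) are \emph{even} rather than odd, so that the Verma sections of $\widetilde P$ are genuinely $M(\bc)$'s (not $\Pi M(\bc)$'s) and the distinguished summand exhibited by the map to $L(\bb)$ is $P(\bb)$ rather than $\Pi P(\bb)$.
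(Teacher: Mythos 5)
Your proposal is correct and follows essentially the same route as the paper's proof: apply Lemma~\ref{construction2} to get a dominant typical $\ba$ and monomial $X$, categorify $X$ by the corresponding composition of the functors $\sF_j$ with $j=\sqrt{i+\half}\sqrt{i-\half}$ applied to the projective Verma $M(\ba)$, use Theorem~\ref{maintfthm2}(1) to get a Verma flag with $M(\bb)$ occurring once at the top and all other sections $M(\bc)$ with $\bc\succ\bb$, and conclude that $P(\bb)$ is a summand, inheriting the required flag since $\sO^\Delta$ is closed under summands. Your closing head-of-the-top-section argument is a slightly more explicit justification of the final bookkeeping than the paper gives, but the approach and all key inputs are the same.
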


\begin{proof}
By Lemma~\ref{construction2},
there exists a dominant, typical $\ba \in \B$ and a monomial $X$ in the
Chevalley generators
$\{f_i\:|\:i \in I\}$ of $\mathfrak{sp}_{2\infty}$
such that
$X v_\ba = v_\bb + 
(\text{a sum of $v_\bc$'s
for $\bc \succ \bb$}).$
Suppose that $X = f_{i_l}\cdots f_{i_2} f_{i_1}$ for $i_k \in I$.
Let $j_k := \sqrt{i_k+\half}\sqrt{i_k-\half}$ and consider the supermodule
$$
P := \sF_{j_l} \cdots \sF_{j_2} \sF_{j_1} \,M(\ba).
$$
Since $\ba$ is dominant and typical, $M(\ba)$ is projective.
Since each $\sF_j$ sends projectives to projectives (being left
adjoint to an exact functor), 
we deduce that $P$ is projective.
Since the combinatorics of 
(\ref{cg}) matches that of Theorem~\ref{maintfthm2},
we can
reinterpret
Lemma~\ref{construction2} as saying
that $P$
has a Verma flag with 
one section evenly isomorphic to $M(\bb)$
and all other sections evenly isomorphic to $M(\bc)$'s for $\bc \succ
\bb$.
In fact, 
the unique section isomorphic to $M(\bb)$ appears at the top of this Verma flag, thanks
the order of the sections arising from Theorem~\ref{maintfthm2}(1).
Hence, $P$ has a summand evenly isomorphic to
$P(\bb)$, and we are done as $\sO^\Delta$ is closed under passing to
summands.
\end{proof}

\begin{corollary}\label{bruhat}
For $\bc \in \B$, we have that $[M(\bc):L(\bc)] = 1$. All other composition factors of $M(\bc)$
are evenly isomorphic to $L(\ba)$'s for $\ba \prec \bc$.
\end{corollary}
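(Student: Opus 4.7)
The plan is to extract composition factor information for $M(\bc)$ from the Verma flag structure of the projectives established in Theorem~\ref{mainsplitthm2}, via a super-analogue of BGG reciprocity.

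In detail, since $P(\bb)$ is a projective cover of $L(\bb)$ in $\underline{\sO}$, a standard argument using exactness of $\Hom(P(\bb),-)$ and Schur's Lemma (together with the fact, recorded earlier, that $L(\bb)$ is not evenly isomorphic to its parity flip) shows that
\[
\dim \Hom_{\sO}(P(\bb), M)_\0 = [M:L(\bb)],\qquad \dim \Hom_{\sO}(P(\bb), M)_\1 = [M:\Pi L(\bb)]
\]
for every $M \in \ob\sO$. Taking $M = M(\bc)^\star$ and invoking the multiplicity formulas $(N : M(\bc)) = \dim \Hom_{\sO}(N, M(\bc)^\star)_\0$ and $(N : \Pi M(\bc)) = \dim \Hom_{\sO}(N, M(\bc)^\star)_\1$ recalled earlier in the section, one gets
\[
(P(\bb) : M(\bc)) = [M(\bc)^\star : L(\bb)] = [M(\bc) : L(\bb)],\qquad (P(\bb) : \Pi M(\bc)) = [M(\bc) : \Pi L(\bb)],
\]
where the middle equality in the first display uses that $\star$ preserves composition multiplicities and fixes $L(\bb)$ evenly.

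Now apply Theorem~\ref{mainsplitthm2}: $P(\bb)$ admits a Verma flag whose sections are evenly isomorphic to $M(\bb)$ (once, at the top) and to various $M(\bc)$ for $\bc \succ \bb$, with no $\Pi M(\bc)$'s appearing. The second identity above therefore forces $[M(\bc) : \Pi L(\bb)] = 0$ for all $\bb, \bc$, so every composition factor of $M(\bc)$ is evenly isomorphic to some $L(\ba)$. Specializing the first identity to $\bb = \bc$ gives $[M(\bc) : L(\bc)] = (P(\bc) : M(\bc)) = 1$, and for $\bb \neq \bc$, non-vanishing of $[M(\bc) : L(\bb)]$ forces $M(\bc)$ to appear as a section of the Verma flag of $P(\bb)$, hence $\bc \succ \bb$, i.e.\ $\bb \prec \bc$. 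The only non-routine point is the careful parity bookkeeping, and this is handled uniformly by the two multiplicity formulas above; there is no serious obstacle.
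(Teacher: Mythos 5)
Your proposal is correct and takes essentially the same route as the paper, which deduces the corollary from Theorem~\ref{mainsplitthm2} via exactly the same BGG reciprocity chain $[M(\bc):\Pi^p L(\ba)] = [M(\bc)^\star:\Pi^p L(\ba)] = \dim \Hom_{\sO}(P(\ba), M(\bc)^\star)_p = (P(\ba):\Pi^p M(\bc))$. The only difference is presentational: you spell out the parity bookkeeping (using that $L(\bb)$ is not evenly isomorphic to $\Pi L(\bb)$) a bit more explicitly than the paper does.
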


\begin{proof}
This follows from Theorem~\ref{mainsplitthm2} using {\em BGG reciprocity}:
for $\ba, \bc \in \B$ and $p \in \Z/2$, we have that
$$
[M(\bc):\Pi^p L(\ba)]
= [M(\bc)^\star:\Pi^p L(\ba)]
= \dim \Hom_{\sO}(P(\ba), M(\bc)^\star)_p
=
(P(\ba):\Pi^p M(\bc)).
$$
\end{proof}

\begin{corollary}\label{split}
For any $\bb \in \B$, every irreducible subquotient of the
indecomposable projective $P(\bb)$ is evenly isomorphic to $L(\ba)$
for $\ba \in \B$ with $|\WT(\ba)| = |\WT(\bb)|$.
\end{corollary}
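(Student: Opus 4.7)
The plan is to chain Theorem~\ref{mainsplitthm2} and Corollary~\ref{bruhat} together, with one crucial combinatorial observation: the $\mathfrak{sp}_{2\infty}$-Bruhat order $\preceq$ on $\B$ preserves the total weight $|\WT(\cdot)| \in P$. Indeed, by (\ref{o700}), $\ba \preceq \bb$ on $\B$ means $\WT(\ba) \preceq \WT(\bb)$ in the inverse dominance order on $P^n$, and by its very definition in $\S$\ref{bos} the latter order requires equality of the component sums. So it will suffice to trap every composition factor of $P(\bb)$ between two elements of $\B$ that are comparable to $\bb$ in the Bruhat order.

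First, I would apply Theorem~\ref{mainsplitthm2} to $P(\bb)$: this produces a Verma flag whose sections are all evenly isomorphic to Verma supermodules $M(\bc)$ with $\bc \succeq \bb$. Consequently every irreducible subquotient of $P(\bb)$ must already appear as a composition factor of some such $M(\bc)$. Next, I would feed each $M(\bc)$ into Corollary~\ref{bruhat}, which tells us that its composition factors are evenly isomorphic to $L(\ba)$'s with $\ba \preceq \bc$.

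Combining the two steps, every irreducible subquotient of $P(\bb)$ is evenly isomorphic to some $L(\ba)$ with $\ba \preceq \bc \succeq \bb$. Invoking the weight-preservation observation above twice then yields $|\WT(\ba)| = |\WT(\bc)| = |\WT(\bb)|$, which is the desired conclusion.

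There is no substantial obstacle: the statement is essentially a bookkeeping corollary of the two preceding results. The only point that needs genuine attention is the parity tracking, i.e.\ verifying that the isomorphisms in question can be taken to be \emph{even} rather than odd; but this is already built into the precise formulations of both Theorem~\ref{mainsplitthm2} (sections evenly isomorphic to $M(\bc)$) and Corollary~\ref{bruhat} (composition factors evenly isomorphic to $L(\ba)$), so it passes through the argument for free.
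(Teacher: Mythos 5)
Your proposal is correct and is essentially identical to the paper's own argument: the paper also chains Theorem~\ref{mainsplitthm2} with Corollary~\ref{bruhat} to conclude that every composition factor of $P(\bb)$ is an $L(\ba)$ with $\ba \preceq \bc \succeq \bb$ for some $\bc$, and then deduces $|\WT(\ba)| = |\WT(\bb)|$. Your explicit remark that comparability in the inverse dominance order forces equality of the total weights $|\WT(\cdot)|$ just spells out the step the paper leaves implicit.
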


\begin{proof}
By Theorem~\ref{mainsplitthm2} and Corollary~\ref{bruhat}, the composition factors of $P(\bb)$ are 
$L(\ba)$'s for $\ba \in \B$ such that $\ba \preceq \bc
\succeq \bb$ for some $\bc$.
This implies that $|\WT(\ba)| = |\WT(\bb)|$.
\end{proof}

\subsection{The main categorification theorem}
Recall that $I = \N$.
The monoidal category in the following definition is 
one of the categories introduced
by Khovanov and Lauda
\cite{KL1, KL2} and Rouquier \cite{Rou}, for the graph arising from the Dynkin
diagram of
$\mathfrak{sp}_{2\infty}$
and the matrix of parameters $(q_{i,j}(u,v))_{i,j \in I}$ defined from
\begin{equation}\label{parameters}
q_{i,j}(u,v) :=
\left\{
\begin{array}{ll}
0&\text{if $i=j$,}\\
1&\text{if $|i-j|>1$,}\\
u^2-v &\text{if $i=1$ and $j=0$,}\\
v^2-u&\text{if $i=0$ and $j=1$,}\\
(i-j)u +(j-i)v
&\text{otherwise.}
\end{array}
\right.
\end{equation}

\begin{definition}\label{QH1}
The \textit{quiver Hecke category} $\QH$
of type $\mathfrak{sp}_{2\infty}$
is the strict $\K$-linear monoidal category generated by
objects $I$
and 
morphisms
$\mathord{
\begin{tikzpicture}[baseline = -2]
	\draw[-,thick,darkred] (0.08,-.15) to (0.08,.3);
      \node at (0.08,0.05) {$\color{darkred}\bullet$};
   \node at (0.08,-.25) {$\scriptstyle{i}$};
\end{tikzpicture}
}:i \rightarrow i$ and
$\mathord{
\begin{tikzpicture}[baseline = -2]
	\draw[-,thick,darkred] (0.18,-.15) to (-0.18,.3);
	\draw[-,thick,darkred] (-0.18,-.15) to (0.18,.3);
   \node at (-0.18,-.25) {$\scriptstyle{i_2}$};
   \node at (0.18,-.25) {$\scriptstyle{i_1}$};
\end{tikzpicture}
}:i_2 \otimes i_1 \rightarrow i_1 \otimes i_2$
subject to the following relations:
\begin{align*}
\mathord{
\begin{tikzpicture}[baseline = 2]
	\draw[-,thick,darkred] (0.25,.6) to (-0.25,-.2);
	\draw[-,thick,darkred] (0.25,-.2) to (-0.25,.6);
  \node at (-0.25,-.28) {$\scriptstyle{i_2}$};
   \node at (0.25,-.28) {$\scriptstyle{i_1}$};
      \node at (-0.14,0.42) {$\color{darkred}\bullet$};
\end{tikzpicture}
}
-
\mathord{
\begin{tikzpicture}[baseline =2]
	\draw[-,thick,darkred] (0.25,.6) to (-0.25,-.2);
	\draw[-,thick,darkred] (0.25,-.2) to (-0.25,.6);
  \node at (-0.25,-.28) {$\scriptstyle{i_2}$};
   \node at (0.25,-.28) {$\scriptstyle{i_1}$};
      \node at (0.14,-0.02) {$\color{darkred}\bullet$};
\end{tikzpicture}
}
&=
\mathord{
\begin{tikzpicture}[baseline = 2]
	\draw[-,thick,darkred] (0.25,.6) to (-0.25,-.2);
	\draw[-,thick,darkred] (0.25,-.2) to (-0.25,.6);
  \node at (-0.25,-.28) {$\scriptstyle{i_2}$};
   \node at (0.25,-.28) {$\scriptstyle{i_1}$};
      \node at (-0.13,-0.02) {$\color{darkred}\bullet$};
\end{tikzpicture}
}
-\mathord{
\begin{tikzpicture}[baseline = 2]
	\draw[-,thick,darkred] (0.25,.6) to (-0.25,-.2);
	\draw[-,thick,darkred] (0.25,-.2) to (-0.25,.6);
  \node at (-0.25,-.28) {$\scriptstyle{i_2}$};
   \node at (0.25,-.28) {$\scriptstyle{i_1}$};
      \node at (0.14,0.42) {$\color{darkred}\bullet$};
\end{tikzpicture}
}
=
\left\{
\begin{array}{ll}
\mathord{
\begin{tikzpicture}[baseline = -1]
 	\draw[-,thick,darkred] (0.08,-.3) to (0.08,.4);
	\draw[-,thick,darkred] (-0.28,-.3) to (-0.28,.4);
   \node at (-0.28,-.4) {$\scriptstyle{i_2}$};
   \node at (0.08,-.4) {$\scriptstyle{i_1}$};  
   \end{tikzpicture} 
   }
   & \text{if $i_1 = i_2$,}\\
	\:\:\:0 & \text{if $i_1 \neq i_2$;}\\
\end{array}
\right. \end{align*}\begin{align*}
\mathord{
\begin{tikzpicture}[baseline = 8]
	\draw[-,thick,darkred] (0.28,.4) to[out=90,in=-90] (-0.28,1.1);
	\draw[-,thick,darkred] (-0.28,.4) to[out=90,in=-90] (0.28,1.1);
	\draw[-,thick,darkred] (0.28,-.3) to[out=90,in=-90] (-0.28,.4);
	\draw[-,thick,darkred] (-0.28,-.3) to[out=90,in=-90] (0.28,.4);
  \node at (-0.28,-.4) {$\scriptstyle{i_2}$};
  \node at (0.28,-.4) {$\scriptstyle{i_1}$};
\end{tikzpicture}
}
&=
\left\{
\begin{array}{ll}
\:\:\:0&\text{if $i_1 = i_2$,}\\
\mathord{
\begin{tikzpicture}[baseline = 0]
	\draw[-,thick,darkred] (0.08,-.3) to (0.08,.4);
	\draw[-,thick,darkred] (-0.28,-.3) to (-0.28,.4);
   \node at (-0.28,-.4) {$\scriptstyle{i_2}$};
   \node at (0.08,-.4) {$\scriptstyle{i_1}$};
\end{tikzpicture}
}&\text{if $|i_1 - i_2| > 1$,}\\
\mathord{
\begin{tikzpicture}[baseline = 0]
	\draw[-,thick,darkred] (0.08,-.3) to (0.08,.4);
	\draw[-,thick,darkred] (-0.28,-.3) to (-0.28,.4);
   \node at (-0.28,-.4) {$\scriptstyle{i_2}$};
   \node at (0.08,-.4) {$\scriptstyle{i_1}$};
      \node at (-0.28,-0.05) {$\color{darkred}\bullet$};
      \node at (-0.28,0.15) {$\color{darkred}\bullet$};
\end{tikzpicture}
}
-\mathord{
\begin{tikzpicture}[baseline = 0]
	\draw[-,thick,darkred] (0.08,-.3) to (0.08,.4);
	\draw[-,thick,darkred] (-0.28,-.3) to (-0.28,.4);
   \node at (-0.28,-.4) {$\scriptstyle{i_2}$};
   \node at (0.08,-.4) {$\scriptstyle{i_1}$};
     \node at (0.08,0.05) {$\color{darkred}\bullet$};
\end{tikzpicture}
}&\text{if $i_1=0$ and $i_2 = 1$,}\\
\mathord{
\begin{tikzpicture}[baseline = 0]
	\draw[-,thick,darkred] (0.08,-.3) to (0.08,.4);
	\draw[-,thick,darkred] (-0.28,-.3) to (-0.28,.4);
   \node at (-0.28,-.4) {$\scriptstyle{i_2}$};
   \node at (0.08,-.4) {$\scriptstyle{i_1}$};
      \node at (0.08,-0.05) {$\color{darkred}\bullet$};
      \node at (0.08,0.15) {$\color{darkred}\bullet$};
\end{tikzpicture}
}
-\mathord{
\begin{tikzpicture}[baseline = 0]
	\draw[-,thick,darkred] (0.08,-.3) to (0.08,.4);
	\draw[-,thick,darkred] (-0.28,-.3) to (-0.28,.4);
   \node at (-0.28,-.4) {$\scriptstyle{i_2}$};
   \node at (0.08,-.4) {$\scriptstyle{i_1}$};
     \node at (-0.28,0.05) {$\color{darkred}\bullet$};
\end{tikzpicture}
}
&\text{if $i_1=1$ and $i_2=0$,}\\
 (i_1-i_2)
\mathord{
\begin{tikzpicture}[baseline = 0]
	\draw[-,thick,darkred] (0.08,-.3) to (0.08,.4);
	\draw[-,thick,darkred] (-0.28,-.3) to (-0.28,.4);
   \node at (-0.28,-.4) {$\scriptstyle{i_2}$};
   \node at (0.08,-.4) {$\scriptstyle{i_1}$};
     \node at (0.08,0.05) {$\color{darkred}\bullet$};
\end{tikzpicture}
}
+ (i_2-i_1)
\mathord{
\begin{tikzpicture}[baseline = 0]
	\draw[-,thick,darkred] (0.08,-.3) to (0.08,.4);
	\draw[-,thick,darkred] (-0.28,-.3) to (-0.28,.4);
   \node at (-0.28,-.4) {$\scriptstyle{i_2}$};
   \node at (0.08,-.4) {$\scriptstyle{i_1}$};
      \node at (-0.28,0.05) {$\color{darkred}\bullet$};
\end{tikzpicture}
}
&\text{otherwise;}\\
\end{array}
\right. 
\end{align*}\begin{align*}
\mathord{
\begin{tikzpicture}[baseline = 2]
	\draw[-,thick,darkred] (0.45,.8) to (-0.45,-.4);
	\draw[-,thick,darkred] (0.45,-.4) to (-0.45,.8);
        \draw[-,thick,darkred] (0,-.4) to[out=90,in=-90] (-.45,0.2);
        \draw[-,thick,darkred] (-0.45,0.2) to[out=90,in=-90] (0,0.8);
   \node at (-0.45,-.5) {$\scriptstyle{i_3}$};
   \node at (0,-.5) {$\scriptstyle{i_2}$};
  \node at (0.45,-.5) {$\scriptstyle{i_1}$};
\end{tikzpicture}
}
\!\!-
\!\!\!
\mathord{
\begin{tikzpicture}[baseline = 2]
	\draw[-,thick,darkred] (0.45,.8) to (-0.45,-.4);
	\draw[-,thick,darkred] (0.45,-.4) to (-0.45,.8);
        \draw[-,thick,darkred] (0,-.4) to[out=90,in=-90] (.45,0.2);
        \draw[-,thick,darkred] (0.45,0.2) to[out=90,in=-90] (0,0.8);
   \node at (-0.45,-.5) {$\scriptstyle{i_3}$};
   \node at (0,-.5) {$\scriptstyle{i_2}$};
  \node at (0.45,-.5) {$\scriptstyle{i_1}$};
\end{tikzpicture}
}
&=
\left\{
\begin{array}{ll}
\mathord{
\begin{tikzpicture}[baseline = -1]
	\draw[-,thick,darkred] (0.44,-.3) to (0.44,.4);
	\draw[-,thick,darkred] (0.08,-.3) to (0.08,.4);
	\draw[-,thick,darkred] (-0.28,-.3) to (-0.28,.4);
   \node at (-0.28,-.4) {$\scriptstyle{i_3}$};
   \node at (0.08,-.4) {$\scriptstyle{i_2}$};
   \node at (0.44,-.4) {$\scriptstyle{i_1}$};
      \node at (0.44,0.05) {$\color{darkred}\bullet$};
\end{tikzpicture}
}
+
\mathord{
\begin{tikzpicture}[baseline = -1]
	\draw[-,thick,darkred] (0.44,-.3) to (0.44,.4);
	\draw[-,thick,darkred] (0.08,-.3) to (0.08,.4);
	\draw[-,thick,darkred] (-0.28,-.3) to (-0.28,.4);
   \node at (-0.28,-.4) {$\scriptstyle{i_3}$};
   \node at (0.08,-.4) {$\scriptstyle{i_2}$};
   \node at (0.44,-.4) {$\scriptstyle{i_1}$};
      \node at (-0.28,0.05) {$\color{darkred}\bullet$};
\end{tikzpicture}
}
&
\text{if $i_1 = i_3=1$ and $i_2=0$,}\\
(i_1 - i_2) 
\mathord{
\begin{tikzpicture}[baseline = -1]
	\draw[-,thick,darkred] (0.44,-.3) to (0.44,.4);
	\draw[-,thick,darkred] (0.08,-.3) to (0.08,.4);
	\draw[-,thick,darkred] (-0.28,-.3) to (-0.28,.4);
   \node at (-0.28,-.4) {$\scriptstyle{i_3}$};
   \node at (0.08,-.4) {$\scriptstyle{i_2}$};
   \node at (0.44,-.4) {$\scriptstyle{i_1}$};
\end{tikzpicture}
}
&
\text{if $i_1 = i_3$, $|i_1 - i_2|= 1$ and $i_2 \neq 0$,}\\
\:\:\:0 &\text{otherwise.}
\end{array}
\right.
\end{align*}
Although we will not make use of it in this section, we note
that $\QH$ can be enriched with a $\Z$-grading by setting $\operatorname{deg}\left(\mathord{
\begin{tikzpicture}[baseline = -2]
	\draw[-,thick,darkred] (0.08,-.15) to (0.08,.3);
      \node at (0.08,0.05) {$\color{darkred}\bullet$};
   \node at (0.08,-.25) {$\scriptstyle{i}$};
\end{tikzpicture}
}\right) := (\alpha_i,\alpha_i)$
and $\operatorname{deg}\left(\mathord{
\begin{tikzpicture}[baseline = -2]
	\draw[-,thick,darkred] (0.18,-.15) to (-0.18,.3);
	\draw[-,thick,darkred] (-0.18,-.15) to (0.18,.3);
   \node at (-0.18,-.25) {$\scriptstyle{i_2}$};
   \node at (0.18,-.25) {$\scriptstyle{i_1}$};
\end{tikzpicture}
}\right) := -(\alpha_{i_1}, \alpha_{i_2})$.
\end{definition}

Our final definition is the analog for $\mathfrak{sp}_{2\infty}$ 
of \cite[Definition 2.10]{BLW}, which reformulated
\cite[Definiton 3.2]{LW} for tensor products of minuscule
representations of $\mathfrak{sl}_{\infty}$.

\begin{definition}\label{tpcdef}
A {\em tensor product categorification}
(TPC for short) of
$V^{\otimes n}$
is the following data:
\begin{itemize}
\item
a highest weight category $\mathcal C$ 
with 
standard objects $\{\Delta(\bb)\:|\:\bb \in \B\}$
indexed by the set $\B$
ordered according to the Bruhat order $\preceq$;
\item
adjoint pairs $(F_i, E_i)$ of endofunctors of $\mathcal C$ for each $i
\in I$;
\item
a strict monoidal functor $\Phi:\QH \rightarrow
\mathcal{E}nd(\C)$ with $\Phi(i) = F_i$ for each $i \in I$.
\end{itemize}
We impose the following additional axioms for all $i
\in I$ and $\bb \in \B$:
\begin{itemize}
\item
$E_i$ is isomorphic to a left adjoint
of $F_i$;
\item
$F_i \Delta(\bb)$ 
has a filtration with sections 
$\{\Delta(\bb+\bd_t)\:|\: 1 \leq t \leq n,
\isig_t(\bb) = \mathtt{f}\}$;
\item
$E_i \Delta(\bb)$ 
has a filtration with sections 
$\{\Delta(\bb-\bd_t)\:|\:1 \leq t \leq n, \isig_t(\bb) =
\mathtt{e}\}$;
\item
the natural transformation
$\Phi\Big(
\mathord{
\begin{tikzpicture}[baseline = -2]
	\draw[-,thick,darkred] (0.08,-.15) to (0.08,.3);
      \node at (0.08,0.05) {$\color{darkred}\bullet$};
   \node at (0.08,-.25) {$\scriptstyle{i}$};
\end{tikzpicture}
}
\Big)$ is {locally nilpotent}.
\end{itemize}
\end{definition}

Now we let $\O$ (resp. $\Pi \O$) be the subcategory of $\sO$ 
consisting of the supermodules all of whose composition factors are
evenly (resp. oddly) isomorphic to
$L(\bb)$'s for $\bb \in \B$.
All morphisms between objects of $\O$ are purely even, so we may as
well  forget the $\Z/2$-grading and view $\O$ simply as a
$\K$-linear category.

Our main theorem is as follows.

\begin{theorem}\label{typec}
We have that $\sO = \O \oplus \Pi \O$, i.e. the supercategory $\sO$ splits.
Moreover, the $\K$-linear category $\O$
admits all of the additional structure needed to make it into a TPC of $V^{\otimes n}$.
\end{theorem}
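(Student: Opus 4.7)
The proof has two parts: the splitting $\sO = \O \oplus \Pi\O$, and putting the TPC structure on $\O$.

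For the splitting, by Corollary~\ref{split} every indecomposable projective $P(\bb)$ in $\underline{\sO}$ has all of its composition factors \emph{evenly} isomorphic to simples $L(\ba)$, so $P(\bb)\in\ob\O$. Parity flips yield the indecomposable projectives of $\Pi\O$, and together these exhaust the indecomposable projectives of $\underline{\sO}$. Since $\underline{\sO}$ is Schurian (hence has enough projectives), every object $M\in\ob\sO$ decomposes as $M_\O\oplus M_{\Pi\O}$ with $M_\O\in\ob\O$ and $M_{\Pi\O}\in\ob\Pi\O$. There are no nonzero morphisms between $\O$ and $\Pi\O$ since their composition factors are disjoint, giving the desired equivalence of supercategories. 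In particular, all morphisms in $\O$ are even, so $\O$ may be regarded as a $\K$-linear category.

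To put the TPC structure on $\O$, fix $i\in I$ and set $j:=\sqrt{i+\half}\sqrt{i-\half}\in J$. Theorem~\ref{maintfthm2}(1) shows that $\sF_j$ sends each $M(\bb)\in\ob\O$ to an object with a Verma flag whose sections are \emph{evenly} isomorphic to the $M(\bb+\bd_t)$ with $\isig_t(\bb)=\mathtt{f}$ (the functor $\sF_{-j}$ produces the analogous odd isomorphisms, i.e.\ lands in $\Pi\O$). Hence $F_i:=\sF_j|_\O$ is a well-defined endofunctor of $\O$, and similarly $E_i:=\sE_j|_\O$ using Theorem~\ref{maintfthm2}(2); the canonical adjunction $(\sE_j,\sF_j)$ restricts to $(E_i,F_i)$. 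Taking $\Delta(\bb):=M(\bb)$, the two $\Delta$-filtration axioms of Definition~\ref{tpcdef} are precisely Theorem~\ref{maintfthm2}. The highest weight structure on $\O$ for the poset $(\B,\preceq)$ follows from Theorem~\ref{mainsplitthm2} (Verma flag of $P(\bb)$ with top $M(\bb)$ and other sections $M(\bc)$ for $\bc\succ\bb$), Corollary~\ref{bruhat} (composition factor bounds on $M(\bc)$), BGG reciprocity, and the existence of the duality $\star$, combined with the Schurian property.

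The main obstacle is constructing the strict monoidal functor $\Phi:\QH\to\mathcal{E}nd(\O)$ with $\Phi(i)=F_i$ and satisfying the type $C_\infty$ quiver Hecke relations (\ref{parameters}). The plan is to start from Theorem~\ref{hkst}, which gives the action of $\AHC$ on $\sO$ via $x, t, c$, pass to generalized eigenspaces to obtain the completed action $\widehat{\Psi}_d:\widehat{AHC}_d\to\End(\sF^d)$ of (\ref{tea}), and then transfer this to a quiver Hecke action via the isomorphism of Kang-Kashiwara-Tsuchioka~\cite{KKT}. On each block $F_{i_d}\cdots F_{i_1}$ (indexed by tuples $\bi\in I^d$, with associated $\bj\in J^d$), the QH-dot on string $r$ is built from the nilpotent shift $(x_r-j_r)1_\bj$, while the QH-crossing differs from $t_r 1_\bj$ by an invertible power-series factor in the shifted generators, chosen precisely so that the relations (\ref{parameters}) hold. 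The key subtleties, which the KKT isomorphism handles, are: (i) the identifications $\sF_j\cong\sF_{-j}$ coming from the odd supernatural transformation $c$, which absorb the Clifford generators and are responsible for the doubled-edge relations at the vertex $i=0$; and (ii) the asymmetric cases $\{i_1,i_2\}=\{0,1\}$ in (\ref{parameters}), which reflect the explicit eigenvalue identities (\ref{needy}) for the chosen square roots. Local nilpotency of $\Phi$ applied to the QH-dot is immediate from Lemma~\ref{minpoly}. With $\Phi$ constructed and all filtration and adjunction axioms checked above, all requirements of Definition~\ref{tpcdef} are met, yielding the theorem.
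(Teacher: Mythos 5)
Your proposal is correct and follows essentially the same route as the paper: the splitting via Corollary~\ref{split}, the highest weight structure from Theorem~\ref{mainsplitthm2} and Corollary~\ref{bruhat}, the functors $F_i=\sF_j|_{\O}$, $E_i=\sE_j|_{\O}$ with the standard-filtration and adjunction axioms supplied by Theorem~\ref{maintfthm2}, and the functor $\Phi$ obtained by converting the affine Hecke--Clifford action of Theorem~\ref{hkst} (in its completed form $\widehat{\Psi}_d$) into a quiver Hecke action via the Kang--Kashiwara--Tsuchioka isomorphism, correctly noting that the vertex $i=0$, where $j^2+\frac{1}{4}=0$, is what forces the type C$_\infty$ parameters. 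The only difference is one of explicitness: the paper records the precise power-series formulae (the elements $y_r$, $p$, $g$ in $\widehat{AHC}_2$) defining $\Phi$ on the dot and crossing, whereas you delegate this to the cited isomorphism, which matches the paper's strategy in outline.
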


\begin{proof}
The fact that $\sO = \O \oplus \Pi \O$ follows from
Corollary~\ref{split}; cf. the proof of \cite[Theorem 5.1]{BD}.
To make $\O$ into a TPC, we need to
introduce the additional data then check the axioms from Definition~\ref{tpcdef}.

It is clear that $\O$ is a Schurian category in the sense of
Definition~\ref{schurian} with irreducible objects
$\{L(\bb)\:|\:\bb \in \B\}$.
Since $P(\bb)$ belongs to $\O$, it is the projective cover of $L(\bb)$
in $\O$. Theorem~\ref{mainsplitthm2} and Corollary~\ref{bruhat} then give the necessary
technical ingredients needed to check that $\O$ is a highest weight
category with the required weight poset; cf. the proof of
\cite[Theorem 5.4]{BD}.
Its standard objects $\{\Delta(\bb)\:|\:\bb\in\B\}$ are the Verma supermodules $\{M(\bb)\:|\:\bb\in\B\}$.

To define $F_i$ and $E_i$, take $i \in I$, and let 
$j := \sqrt{i+\half}\sqrt{i-\half}$.
Theorem~\ref{maintfthm2}
shows that $\sF_j \,M(\bb)$ and $\sE_j \,M(\bb)$ are objects of $\O$.
Hence, by exactness, the functors $\sF_j$ and $\sE_j$ send arbitrary objects from
$\O$ to objects of $\O$. So we obtain the required endofunctors by
setting
$$
F_i := \sF_j|_{\O}:\O\rightarrow\O, 
\qquad
E_i := \sE_j|_{\O}:\O\rightarrow\O.
$$
The adjunctions $(\sF_j, \sE_j)$ and $(\sE_j, \sF_j)$ discussed
earlier give adjunctions $(F_i, E_i)$ and $(E_i, F_i)$ too. 
Also $F_i \,M(\bb)$ and $E_i \,M(\bb)$ have the required Verma
filtrations thanks to Theorem~\ref{maintfthm2}. 

It remains to define $\Phi:\QH \rightarrow
\mathcal{E}nd(\O)$.
Since $\QH$ is defined by generators and relations, we can
do this simply by declaring that $\Phi(i) := F_i$ for each $i$, then 
specifying natural transformations
$\Phi\Big(
\mathord{
\begin{tikzpicture}[baseline = -2]
	\draw[-,thick,darkred] (0.08,-.15) to (0.08,.3);
      \node at (0.08,0.05) {$\color{darkred}\bullet$};
   \node at (0.08,-.25) {$\scriptstyle{i}$};
\end{tikzpicture}
}
\Big):F_i \Rightarrow F_i$
and
$\Phi\Big(\mathord{
\begin{tikzpicture}[baseline = -2]
	\draw[-,thick,darkred] (0.18,-.15) to (-0.18,.3);
	\draw[-,thick,darkred] (-0.18,-.15) to (0.18,.3);
   \node at (-0.18,-.25) {$\scriptstyle{i_2}$};
   \node at (0.18,-.25) {$\scriptstyle{i_1}$};
\end{tikzpicture}
}\Big):F_{i_2} F_{i_1} \Rightarrow F_{i_1}F_{i_2}$ satisfying the quiver
Hecke relations from Definition~\ref{QH1}.
The explicit formulae for these natural transformations are recorded 
in the next two paragraphs.
They were derived like in the proof of \cite[Theorem 6.2]{BD}
by starting from
the supernatural transformations from Theorem~\ref{hkst},
which satisfy the affine Hecke-Clifford relations of
Definition~\ref{AHC}, then using the remarkable isomorphism from
\cite[Theorem 5.4]{KKT}
to combine these into supernatural transformations satisfying 
the quiver Hecke-Clifford relations of \cite[Definition 3.5]{KKT}.
When $i=0$, the number $j =
\sqrt{i+\half}\sqrt{i-\half}$
satisfies $j^2 + \frac{1}{4} = 0$. Hence, we are in the situation of
\cite[$\S$5.2(i)(c)]{KKT} and the appropriate Dynkin diagram is
of type $\mathfrak{sp}_{2\infty}$, unlike in \cite{BD} where it was of
type $\mathfrak{sl}_\infty$. This is really the only difference
compared to the proof of \cite[Theorem 6.2]{BD}, so we omit any
further explanations.

Here we give the explicit formula for
$\Phi\Big(
\mathord{
\begin{tikzpicture}[baseline = -2]
	\draw[-,thick,darkred] (0.08,-.15) to (0.08,.3);
      \node at (0.08,0.05) {$\color{darkred}\bullet$};
   \node at (0.08,-.25) {$\scriptstyle{i}$};
\end{tikzpicture}
}
\Big)$.
Let $j := \sqrt{i+\half}\sqrt{i-\half}$, then define
$y_1 \in (x_1-j)\K[[x_1-j]]$ to be $x_1^2+\frac{1}{4}$ if $i=0$,
or 
the unique power series in $(x_1-j)\K[[x_1-j]]$ such that
$(y_1+i)^2 = x_1^2+\frac{1}{4}$ if $i \neq 0$.
Recalling (\ref{completed}), this gives us an element $y_1 1_{j} \in \widehat{AHC}_1$. Applying the
homomorphism $\widehat{\Psi}_1$ from (\ref{tea}), we obtain from this
an even supernatural transformation $\widehat{\Psi}_1(y_1 1_j):\sF_{j}
\rightarrow \sF_j$. 
Since $F_i$ is the restriction of $\sF_j$, this gives us the required
natural transformation
$\Phi\Big(
\mathord{
\begin{tikzpicture}[baseline = -2]
	\draw[-,thick,darkred] (0.08,-.15) to (0.08,.3);
      \node at (0.08,0.05) {$\color{darkred}\bullet$};
   \node at (0.08,-.25) {$\scriptstyle{i}$};
\end{tikzpicture}
}
\Big)$. It is locally nilpotent because $(x-j)$ acts locally
nilpotently on $\sF_j$ by the definition of $\sF_j$.

Finally, we give the formula for
$\Phi\Big(\mathord{
\begin{tikzpicture}[baseline = -2]
	\draw[-,thick,darkred] (0.18,-.15) to (-0.18,.3);
	\draw[-,thick,darkred] (-0.18,-.15) to (0.18,.3);
   \node at (-0.18,-.25) {$\scriptstyle{i_2}$};
   \node at (0.18,-.25) {$\scriptstyle{i_1}$};
\end{tikzpicture}\Big)
}$.
For this, we work in $\widehat{AHC}_2$.
For $r = 1,2$, let $j_r := \sqrt{i_r+\half}\sqrt{i_r-\half}$, then
define $y_r \in (x_r-j_r)\K[[x_r-j_r]$
to be $x_r^2+\frac{1}{4}$ if $i_r=0$,
or 
the unique power series
such that
$(y_r+i_r)^2 = x_r^2+\frac{1}{4}$ if $i_r \neq 0$ (like in the
previous paragraph).
Let 
$$
p:=\frac{(x_1^2-x_2^2)^2}{2(x_1^2+x_2^2)-(x_1^2-x_2^2)^2},
$$
which is an element of $\K[[x_1-j_1,x_2-j_2]]$ unless $|i_1- i_2|=1$ (when it should be viewed as an element of the fraction field).
Then, recalling (\ref{parameters}), we define $g \in
\K[[x_1-j_1,x_2-j_2]]$ from
$$
g :=
\left\{
\begin{array}{ll}
-1&\text{if $i_1 < i_2$,}\\
\sqrt{p}/(y_1-y_2)&\text{if $i_1= i_2$,}\\
 p\:q_{i_2,i_1}\!(y_2,y_1)&\text{if $i_1 > i_2$,}
\end{array}\right.
$$
choosing the square root when $i_1=i_2$
so that $g -\frac{x_1-x_2}{y_1-y_2} \in
(x_1-x_2)\K[[x_1-j_1,x_2-j_2]]$.
Using (\ref{opaque1})--(\ref{opaque2}), one can check that $$
t_1 g 1_{j_2 j_1} +
\left(\frac{g}{x_1-x_2}-\frac{\delta_{i_1,i_2}}{y_1-y_2}\right) 1_{j_2 j_1}
+ c_1 c_2 \frac{g}{x_1+x_2}1_{j_2 j_1} \in 
1_{j_1 j_2} \widehat{AHC}_2 1_{j_2 j_1}.
$$
Applying $\widehat{\Psi}_2$, we obtain an even 
supernatural transformation $\sF_{j_2} \sF_{j_1}\Rightarrow \sF_{j_1}
\sF_{j_2}$, hence, the desired natural transformation $F_{i_2} F_{i_1}
\Rightarrow F_{i_1} F_{i_2}$.
\end{proof}

\section{Orthodox basis}

In this section, we prove the first Cheng-Kwon-Wang conjecture
\cite[Conjecture 5.12]{CKW}.
Throughout the section, $I$ will denote the set $\N$ that indexes the
simple roots of $\mathfrak{sp}_{2\infty}$, and $\B = \Z^n$ as always.
For $k \geq 1$, we'll write $I_k$ for the set $\{0,1,\dots,k-1\}$ that
indexes the simple roots of the subalgebra $\mathfrak{sp}_{2k} <
\mathfrak{sp}_{2\infty}$, and define $\B_k$ as in (\ref{Bsigma}).

\subsection{\boldmath Truncation from $\mathfrak{sp}_{2\infty}$ to $\mathfrak{sp}_{2k}$}\label{roger1}
Fix $k \geq 1$.
The quiver Hecke category of type $\mathfrak{sp}_{2k}$ is the
full subcategory $\QH_k$ of $\QH$ whose objects are monoidally generated
by $I_k \subset I$.
There is a notion of a {\em tensor product categorification
 of $V_k^{\otimes n}$}. This is defined in exactly the same way as
Definition~\ref{tpcdef},
replacing $\mathfrak{sp}_{2\infty}, V, \B, I$ and $\QH$ with
$\mathfrak{sp}_{2k}, V_k, \B_k, I_k$ and $\QH_k$, respectively.
In this subsection, we are going to explain how to construct such a
structure
out of a TPC of $V^{\otimes n}$ 
by passing to a certain subquotient.
The approach is similar to that of \cite[$\S$2.8]{BLW}.

Recall (\ref{N2}). Let $\B_{\leq k}$ denote the set of all $\bb \in \B$
such that
$N_{[1,s]}(\bb,k) \leq 0$
for $s=1,\dots,n-1$ and $N_{[1,n]}(\bb,k) = 0$.
Let $\B_{< k}$ be the set of all $\bb
\in \B_{\leq k}$ such that $N_{[1,s]}(\bb,k) < 0$
for at least one $s$.
Lemma~\ref{spequiv} implies that these are both ideals (lower sets) in
the poset $\B$.
Observe moreover that $\B_k$ is the set difference
$\B_{\leq k} \setminus \B_{< k}$.

Now let $\C$ be any TPC
of $V^{\otimes n}$.
Let $\C_{\leq k}$ be the Serre subcategory of $\C$ generated
by the irreducible supermodules $\{L(\bb)\:|\:\bb \in \B_{\leq k}\}$,
and define $\C_{< k}$ similarly using $\B_{< k}$. 
As $\B_{\leq k}$ and $\B_{< k}$ are ideals, 
we are in the same general situation
as discussed in \cite[$\S$2.5]{BLW}.
Hence,
$\C_{\leq k}$ and
$\C_{< k}$ get
induced highest weight structures, as does the Serre
quotient $\C_k := \C_{\leq k} / \C_{< k}$.
Its weight poset is $(\B_k, \preceq)$.

\begin{theorem}\label{fk}
The subquotient $\C_k$ of $\C$
admits the structure of a TPC of $V_k^{\otimes n}$.
\end{theorem}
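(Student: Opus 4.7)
The plan is to descend the TPC structure from $\C$ to $\C_k$ using the Serre quotient formalism, in the same spirit as \cite[\S 2.8]{BLW}. Since the highest weight structure on $\C_k$ with weight poset $(\B_k, \preceq)$ is already available, the task reduces to producing endofunctors $\bar F_i, \bar E_i$ for $i \in I_k$ and a strict monoidal functor $\Phi_k : \QH_k \to \mathcal{E}nd(\C_k)$, and then verifying the axioms of Definition~\ref{tpcdef} in the $\mathfrak{sp}_{2k}$-version.

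The key combinatorial input is that the ideals $\B_{<k} \subset \B_{\leq k}$ are stable under $f_i, e_i$ for every $i \in I_k$. Indeed, from (\ref{newisig}), for such $i$ any entry $b_t$ with $\isig_t(\bb) \in \{\mathtt{e}, \mathtt{f}\}$ satisfies $b_t \in \{-k+1, \dots, k\}$, and the same is true of the modified entry $b_t \pm 1$; this uses $i \leq k-1$ and covers the short simple root $i = 0$, where $b_t \in \{0,1\}$. Hence $N_{[1,s]}(\bb \pm \bd_t, k) = N_{[1,s]}(\bb, k)$ for every $s$, and Lemma~\ref{spequiv} gives the claim. Combined with the standard filtration axiom in $\C$, it follows that $F_i \Delta(\bb) \in \C_{\leq k}$ (resp. $\C_{<k}$) whenever $\bb \in \B_{\leq k}$ (resp. $\B_{<k}$); since $F_i$ is exact because $E_i$ is both a left and a right adjoint of $F_i$, an induction on length extends this from standards to all objects, so $F_i$ preserves both Serre subcategories. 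The analogous statement for $E_i$ is identical. Therefore $F_i, E_i$ descend to exact endofunctors $\bar F_i, \bar E_i$ of $\C_k$, and both adjunctions $(F_i, E_i)$ and $(E_i, F_i)$ descend with them.

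To descend the monoidal data, I would observe that $\QH_k$ is by definition the full monoidal subcategory of $\QH$ generated by $I_k$, so $\Phi$ restricts to a strict monoidal functor $\QH_k \to \mathcal{E}nd(\C_{\leq k})$ whose generating natural transformations all preserve $\C_{<k}$, and hence induces $\Phi_k : \QH_k \to \mathcal{E}nd(\C_k)$ satisfying the quiver Hecke relations; local nilpotence of the dot in $\Phi_k$ is inherited from $\Phi$. Finally, for each $\bb \in \B_k$ and $i \in I_k$, the Verma filtration of $F_i \Delta(\bb)$ in $\C$ has sections $\{\Delta(\bb + \bd_t) : \isig_t(\bb) = \mathtt{f}\}$, and each such $\bb + \bd_t$ again lies in $\B_k$ by the combinatorial observation above, so no section is killed in the quotient. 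This delivers the required filtration of $\bar F_i \bar\Delta(\bb)$, and the case of $\bar E_i$ is dual.

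The main obstacle is precisely the combinatorial stability of $\B_{\leq k}$ and $\B_{<k}$ under $f_i, e_i$ for $i \in I_k$, with a separate sanity check at the short simple root $\alpha_0 = -2\eps_0$; once this is pinned down via Lemma~\ref{spequiv}, the descent of the adjoint pairs, the monoidal functor, and the standard filtrations is formal.
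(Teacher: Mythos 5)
Your proposal is correct and follows essentially the same route as the paper: the same key combinatorial observation that $N_{[1,s]}(\bb\pm\bd_t,k)=N_{[1,s]}(\bb,k)$ whenever $\isig_t(\bb)\in\{\mathtt{e},\mathtt{f}\}$ for $i\in I_k$, invariance of $\C_{\leq k}$ and $\C_{<k}$ checked on standard objects and propagated by exactness (the step the paper delegates to \cite[Lemma 2.18]{BLW}), and then formal descent of the adjunctions, the $\QH_k$-action, and the standard filtrations. Your write-up just spells out the details the paper compresses into ``all of the other required structure comes immediately from the definitions.''
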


\begin{proof}
We must check all of the properties from the $\mathfrak{sp}_{2k}$
version of Definition~\ref{tpcdef}.
We've already explained that $\C_k$ is a highest weight category with
the appropriate weight poset.
Next, we show that the endofunctors $E_i, F_i$ for $i \in I_k$ leave
both $\C_{\leq k}$ and $\C_{< k}$ invariant. 
As in the proof of \cite[Lemma 2.18]{BLW}, 
we just need to verify this on standard objects, when it follows using
the observation that $$
N_{[1,s]}(\bb\pm \bd_r,k) =
N_{[1,s]}(\bb,k)
$$
for all $\bb \in \B$ and $r,s=1,\dots,n$ such that $\isig_r(\bb) \in
\{\mathtt{e},\mathtt{f}\}$ for $i \in I_k$.
Hence, $E_i, F_i$ 
induce biadjoint endofunctors of $\C_k$ for each $i \in I_k$.
All of the other required structure comes immediately from the definitions.
\end{proof}

\subsection{Proof of the first Cheng-Kwon-Wang conjecture}
Our definition of a TPC
of $V_k^{\otimes n}$ is a simplified version of the more general
notion of TPC from \cite[Definition 3.2]{LW}.
The simplification is possible because $V_k$ is a minuscule highest
weight representation for $\mathfrak{sp}_{2k}$.
The equivalence of our definition with the Losev-Webster definition
may be verified by a similar argument to the one explained in
\cite[Remark 2.11]{BLW}.
Hence, we obtain the following as a special case of the
uniqueness theorem for TPCs that is the
main result of \cite{LW}; we refer to \cite[Definition 4.7]{BD1} for
the definition of strongly equivariant equivalence being used here.

\begin{theorem}[Losev--Webster]\label{uniqueness}
All TPCs of
$V_k^{\otimes n}$ are strongly equivariantly equivalent via
equivalences which preserve the labelling of irreducible objects.
\end{theorem}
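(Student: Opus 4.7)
The plan is to reduce the statement to the main uniqueness theorem of \cite{LW}, which establishes uniqueness (up to strongly equivariant equivalence preserving the labelling of standard objects) of tensor product categorifications in the general sense of Losev-Webster. The only real work is to check that our streamlined Definition~\ref{tpcdef}, transcribed to the $\mathfrak{sp}_{2k}$ setting as in $\S\ref{roger1}$, produces a TPC in the sense of \cite[Definition 3.2]{LW}. This is exactly what the authors flag in the sentence preceding the theorem.

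First I would recall the Losev-Webster axioms in full: they require a Schurian highest weight category equipped with a categorical $\mathfrak{sp}_{2k}$-action (biadjoint endofunctors $E_i, F_i$ intertwined by an action of the associated quiver Hecke algebra), together with a compatibility between the crystal on the weight poset and the tensor factorization, encoded via a filtration by Serre subcategories corresponding to the tensor factors and a matching of standard objects with their crystal labels. Because $V_k$ is minuscule, several strands of this axiomatization collapse: each $i$-string of the crystal of $V_k$ has length at most one, so the filtration-by-factor axioms reduce to exactly the Verma-like filtrations of $F_i\Delta(\bb)$ and $E_i\Delta(\bb)$ prescribed by the $i$-signature in our Definition~\ref{tpcdef}. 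In particular, the ordering of the sections dictated by Theorem~\ref{maintfthm2} is the one required by \cite{LW}.

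Next I would verify each Losev-Webster axiom from our data. The categorical action is produced by the monoidal functor $\Phi:\QH_k\rightarrow\mathcal{E}nd(\C_k)$ together with the adjunctions $(F_i,E_i)$ and $(E_i,F_i)$; local nilpotency of the image of the dot under $\Phi$ ensures that this extends to an action of the completed quiver Hecke category, which is the form in which the Kac-Moody action enters \cite{LW}. The compatibility between the Bruhat order on $\B_k$ and the $i$-signature combinatorics recorded in Lemma~\ref{spequiv} and Lemma~\ref{not} then guarantees that the highest weight structure interacts with the categorical action in the way prescribed by \cite{LW}; the uniqueness theorem then applies verbatim.

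The main obstacle is purely bookkeeping: it is to reconcile conventions between our Definition~\ref{QH1} and the conventions of \cite{LW} (sign choices for the parameters $q_{i,j}(u,v)$, normalization of Chevalley generators on the standard monomial basis of $V_k$, and possible rescaling of generators by units in the completed algebra). This is the same sort of translation carried out in \cite[Remark 2.11]{BLW} for $\mathfrak{sl}_\infty$; the arguments there adapt directly to the present setting, with the type A Dynkin data and natural $\mathfrak{sl}_\infty$-module replaced by the type C data and $V_k$. Once the equivalence of definitions is established, the theorem follows immediately from the main result of \cite{LW}.
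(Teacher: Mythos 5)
Your proposal matches the paper's treatment: the theorem is attributed to Losev--Webster, and the paper's only "proof" is precisely your plan, namely observing that the streamlined Definition~\ref{tpcdef} (in its $\mathfrak{sp}_{2k}$ form) is equivalent to the general notion of \cite[Definition 3.2]{LW} because $V_k$ is minuscule, with the verification carried out as in \cite[Remark 2.11]{BLW}, after which the main uniqueness theorem of \cite{LW} applies directly. Your write-up just spells out this reduction in more detail than the paper does.
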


If we apply the construction from the previous subsection to the
category $\O$ of Theorem~\ref{typec}, we obtain a subquotient $\O_k :=
\O_{\leq k} / \O_{< k}$ of $\O$
which is a TPC of $V_k^{\otimes
  n}$.
Let $\Web_k$ denote Webster's tensor product algebra
associated to the $n$-fold
tensor product
of the natural representation of $\mathfrak{sp}_{2k}$, that is, 
the algebra $T^{\bomega_k}$ from \cite[$\S$4]{Web}
associated to the $n$-tuple of dominant weights
$\bomega_k := (-\eps_{k-1},\dots,-\eps_{k-1})$ for $\mathfrak{sp}_{2k}$.
Webster's general theory from \cite{Web}
shows that the category $\Web_k\text{-}\mod$ of finite dimensional modules
over this algebra also has the structure of a TPC of $V^{\otimes n}_k$;
see also \cite[Theorem 3.12]{LW}.
Hence, applying Theorem~\ref{uniqueness}, we obtain the following:

\begin{corollary}\label{here}
The category $\O_k$ is equivalent to $\Web_k\text{-}\mod$
via an equivalence which preserves the labelling of irreducible objects.
\end{corollary}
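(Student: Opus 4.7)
The proof will be essentially a short assembly of results already in hand, so the plan is simply to verify that both categories fit the hypotheses of the Losev--Webster uniqueness theorem (Theorem~\ref{uniqueness}).

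First, I would note that by Theorem~\ref{typec}, the category $\O$ is a TPC of the $\mathfrak{sp}_{2\infty}$-module $V^{\otimes n}$, with standard objects the Verma supermodules $M(\bb)$ indexed by $\bb \in \B$ ordered by the $\mathfrak{sp}_{2\infty}$-Bruhat order. Applying the truncation construction of $\S$\ref{roger1}, specifically Theorem~\ref{fk}, the subquotient $\O_k := \O_{\leq k}/\O_{<k}$ inherits the structure of a TPC of the $\mathfrak{sp}_{2k}$-module $V_k^{\otimes n}$. Its weight poset is $(\B_k, \preceq)$, and its labelled irreducibles are precisely the images of the $L(\bb)$ for $\bb \in \B_k$.

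Second, I would invoke the parallel fact on the Webster side: by the general machinery of \cite{Web} (see also \cite[Theorem 3.12]{LW}), the category $\Web_k\text{-}\mod$ of finite-dimensional modules over the tensor product algebra $T^{\bomega_k}$ carries a canonical structure of a TPC of $V_k^{\otimes n}$, with irreducibles labelled in the same way by $\B_k$.

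Finally, both structures are TPCs of the same minuscule tensor product $V_k^{\otimes n}$ for the finite type algebra $\mathfrak{sp}_{2k}$, so Theorem~\ref{uniqueness} applies and produces a (strongly equivariant) equivalence $\O_k \simeq \Web_k\text{-}\mod$ that matches the $\B_k$-labellings of irreducible objects. The only nontrivial content in the argument lies in confirming that our Definition~\ref{tpcdef}, tailored to minuscule tensor products, is equivalent to the Losev--Webster definition used in Theorem~\ref{uniqueness}; this is the point where a little care is needed, but as noted just before Theorem~\ref{uniqueness}, the equivalence of the two definitions goes through by the same reasoning as \cite[Remark 2.11]{BLW}. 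There is no real obstacle beyond keeping track of conventions.
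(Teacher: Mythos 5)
Your proposal is correct and follows exactly the paper's own route: $\O_k$ is a TPC of $V_k^{\otimes n}$ by Theorems~\ref{typec} and \ref{fk}, $\Web_k\text{-}\mod$ is one by Webster's theory (cf.\ \cite[Theorem 3.12]{LW}), and the Losev--Webster uniqueness theorem (Theorem~\ref{uniqueness}) yields the label-preserving equivalence. The remark about matching Definition~\ref{tpcdef} with the Losev--Webster definition via the argument of \cite[Remark 2.11]{BLW} is precisely the caveat the paper itself records before Theorem~\ref{uniqueness}, so nothing is missing.
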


In particular, this means that the combinatorics of decomposition
numbers in the category $\O$ is the same as that of Webster's tensor
product algebras. 
More precisely, given any $\ba,\bb \in \B$, we pick $k$ large enough
so that $\ba, \bb$ both belong to $\B_k$.
Then, 
Corollary~\ref{here}
implies that
\begin{equation}\label{army}
[M(\ba):L(\bb)]
= [M_k(\ba):L_k(\bb)],
\end{equation}
where $M_k(\ba)$ denotes the 
standard
$\Web_k$-module associated to $\ba \in \B_k$ as constructed in
\cite[$\S$5]{Web}, and $L_k(\ba)$ is its unique irreducible quotient.
Indeed, for $\ba \in \B_k$, 
the canonical images of the standard objects $M(\ba)$ and their
irreducible quotients
$L(\ba)$ in the quotient category $\O_k$
map under the equivalence from Corollary~\ref{here} to copies of $M_k(\ba)$ and
$L_k(\ba)$, respectively.
Then (\ref{army}) follows just like
in the proof of \cite[Theorem 2.21]{BLW}, 

We can reformulate the assertions made in the previous paragraph in
terms of Webster's {orthodox basis}, as follows.
Let $P_k(\ba)$ be the projective cover of $L_k(\ba)$
in $\Web_k\text{-}\mod$. As
$\Web_k\text{-}\mod$ is a TPC, there
is a vector space isomorphism
$$
\iota_k:\CC \otimes_{\Z} K_0(\Web_k\text{-}\mod) \stackrel{\sim}{\rightarrow} V_k^{\otimes n},
\qquad
[M_k(\ba)] \mapsto v_\ba.
$$
By the definition from \cite[$\S$7]{Web1}, Webster's {\em orthodox basis} of $V_k^{\otimes n}$
(specialized at $q=1$) is the basis
$\left\{\iota_k([P_k(\bb)])\:\big|\:\bb \in \B_k\right\}.$
Analogously, we can consider the isomorphism
$$
\iota:\CC \otimes_{\Z} K_0(\mathcal O^\Delta) \stackrel{\sim}{\rightarrow} V^{\otimes n},
\qquad
[M(\ba)] \mapsto v_\ba.
$$
The following defines the {\em orthodox basis} of
$V^{\otimes n}$ (specialized at $q=1$).

\begin{theorem}\label{lastyear}
The space $V^{\otimes n}$ has a unique topological basis
$\{o_\bb\:|\:b \in \B\}$ such that $\pr_k o_\bb = \iota_k([P_k(\bb)])$
for each $k \geq 1$ and $\bb \in \B_k$.
Moreover, we have that $o_\bb = \iota([P(\bb)])$ for any $\bb \in \B$.
\end{theorem}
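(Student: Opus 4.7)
The plan is to take $o_\bb := \iota([P(\bb)])$ as the natural candidate, and then verify that this element satisfies the characterising compatibility $\pr_k o_\bb = \iota_k([P_k(\bb)])$; this will simultaneously prove existence, the ``moreover'' clause, and (combined with a short argument) uniqueness. Theorem~\ref{mainsplitthm2} together with the fact that $\O$ is Schurian (so $P(\bb)$ has finite length) ensures that
$$
o_\bb = v_\bb + \sum_{\bc \succ \bb} (P(\bb):M(\bc))\, v_\bc
$$
is a well-defined finite linear combination in $V^{\otimes n}$, so $o_\bb$ lives where the theorem claims.

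To verify compatibility, I would fix $\bb \in \B_k$ and apply $\pr_k$, which keeps only those $v_\bc$ with $\bc \in \B_k$:
$$
\pr_k o_\bb = \sum_{\bc \in \B_k} (P(\bb):M(\bc))\, v_\bc.
$$
By BGG reciprocity (as used in Corollary~\ref{bruhat}), $(P(\bb):M(\bc)) = [M(\bc):L(\bb)]$. The key input~(\ref{army}) then identifies this with $[M_k(\bc):L_k(\bb)]$ for all $\bb,\bc \in \B_k$, and a second application of BGG reciprocity on the Webster side turns the latter into $(P_k(\bb):M_k(\bc))$. Summing, $\pr_k o_\bb = \iota_k([P_k(\bb)])$, as desired.

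For uniqueness, if $o'_\bb$ is any other element with $\pr_k o'_\bb = \iota_k([P_k(\bb)])$ for every $k \geq 1$, then $\pr_k(o_\bb - o'_\bb) = 0$ for all $k$; since any $\bc \in \B$ lies in $\B_k$ for $k$ sufficiently large, every $v_\bc$-coefficient of the difference must vanish, forcing $o'_\bb = o_\bb$. The topological basis property is then immediate from the unitriangular expansion $o_\bb = v_\bb + (\text{higher-$\preceq$ terms})$, combined with interval-finiteness of the $\mathfrak{sp}_{2\infty}$-Bruhat order (Lemma~\ref{spequiv}), which ensures that the change-of-basis matrix is invertible in the completion $\widehat{V}^{\otimes n}$.

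The genuine work has already been done upstream: the crucial input is equation~(\ref{army}), which in turn rests on Corollary~\ref{here} and hence on the Losev--Webster uniqueness Theorem~\ref{uniqueness} applied to the $\mathfrak{sp}_{2k}$-TPC structure on $\O_k$ from Theorem~\ref{fk}. Once (\ref{army}) is accepted, the argument for Theorem~\ref{lastyear} reduces to the routine BGG-reciprocity and triangularity manipulations sketched above, so I do not expect any serious obstacle in the proof itself.
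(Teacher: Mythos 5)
Your proposal is correct and follows essentially the same route as the paper: define $o_\bb := \iota([P(\bb)])$, expand via BGG reciprocity in both $\O$ and Webster's category, and use (\ref{army}) to match the coefficients under $\pr_k$. The uniqueness/topological-basis remarks you add are routine and consistent with the paper's setup.
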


\begin{proof}
Let $o_\bb := \iota([P(\bb)])$.
By BGG reciprocity in the highest weight categories $\O$ and
$A_k\text{-}\mod$, respectively, we have that $[P(\bb)] = \sum_{\ba \in \B}
[M(\ba):L(\bb)] [M(\ba)]$ and
$[P_k(\bb)] = \sum_{\ba \in \B_k}
[M_k(\ba):L_k(\bb)] [M_k(\ba)]$.
Hence, for $\bb \in \B_k$, we have that
$$
\pr_k o_\bb 
= \sum_{\ba \in \B_k} [M(\ba):L(\bb)] v_\ba
= \sum_{\ba \in \B_k} [M_k(\ba):L_k(\bb)] v_\ba=
\iota_k([P_k(\bb)]),
$$
using (\ref{army}) for the middle equality.
\end{proof}

This establishes the truth of \cite[Conjecture 5.12]{CKW}. 
Actually, Cheng, Kwon and Wang formulated their conjecture 
in terms of tilting modules instead of projective modules, i.e. they
work in the
Ringel dual setting. The equivalence of our Theorem~\ref{lastyear} with their
conjecture follows by \cite[(7.12)]{Btilt}.

\begin{remark}\label{or}
Webster's algebra $A_k$ admits a natural $\Z$-grading.
Hence, one can consider the category $A_k\text{-}\grmod$ of
finite-dimensional {\em graded} $A_k$-modules. The endofunctors $E_i$
and $F_i$ also admit graded lifts, making $A_k\text{-}\grmod$ into a
{\em $U_q \mathfrak{sp}_{2k}$-tensor product categorification} of
$\dot V_k^{\otimes n}$. We
refer the reader to
\cite[Definition 5.9]{BLW} for a related definition which is easily adapted to the present
situation; this depends on the grading on $\QH_k$ noted at the end of
Definition~\ref{QH1}.
The Grothendieck group $K_0(A\text{-}\grmod)$ is a
$\Z[q,q^{-1}]$-module with $q$ acting as the upward grading shift
functor. Also the standard modules $M_k(\ba)$ admit graded lifts $\dot
M_k(\ba)$, such that there is a $\Q(q)$-vector space isomorphism
$$
\dot\iota_k:\Q(q) \otimes_{\Z[q,q^{-1}]} K_0(A_k\text{-}\grmod)
\stackrel{\sim}{\rightarrow} \dot V_k^{\otimes n},
\qquad
[\dot M_k(\ba)] \mapsto \dot v_\ba.
$$
Webster's {\em orthodox basis} of $\dot V_k^{\otimes n}$
is the basis $\big\{\dot \iota_k([\dot P_k(\bb)])\:\big|\:\bb \in \B_k\big\}$, where $\dot P_k(\bb)$
is the projective cover of $\dot M_k(\bb)$ in $A_k\text{-}\grmod$.
Using the graded analog of 
Theorem~\ref{uniqueness}, one can show that the coefficients of this basis stabilize as $k
\rightarrow \infty$, hence, there is a unique topological basis
$\{\dot o_\bb \:|\:\bb \in \B\}$ for $\dot V^{\otimes n}$
such that $\pr_k \dot o_\bb = \dot \iota_k([\dot P_k(\bb)])$ for all
$k \geq 1$ and $\bb \in \B_k$. This is the $q$-analog of the basis in Theorem~\ref{lastyear}.
\end{remark}

\begin{remark}\label{didnt}
We expect that the category $\O$ admits a graded lift $\dot\O$
which is a $U_q \mathfrak{sp}_{2\infty}$-tensor product
categorification of $\dot V^{\otimes n}$. Then there should be a
$\Q(q)$-vector space isomorphism
$$
\dot \iota:\Q(q)\otimes_{\Z[q,q^{-1}]} K_0(\dot \O)
\stackrel{\sim}{\rightarrow}
\dot V^{\otimes n}, \quad
[\dot M(\ba)] \mapsto \dot v_\ba,
\quad
[\dot P(\bb)] \mapsto \dot o_\bb,
$$
for suitable graded lifts $\dot M(\ba)$ and $\dot P(\bb)$
of $M(\ba)$ and $P(\bb)$.
It should be possible to prove these statements by mimicking
the general approach developed in \cite{BLW}. The argument would also
yield an extension of the uniqueness theorem (Theorem~\ref{uniqueness}) from
$\mathfrak{sp}_{2k}$ to $\mathfrak{sp}_{2\infty}$.
\end{remark}

\subsection{Prinjectives and the associated crystal}\label{cryssec}
The proof of the uniqueness theorem in \cite{LW} gives a great
deal of additional information about the structure of TPCs of
$V_k^{\otimes n}$. In particular, \cite[Theorem 7.2]{LW} gives an
explicit combinatorial description of the associated 
crystal 
in the general sense of
\cite[$\S$4.4]{BD1}. Also,
\cite[Proposition 5.2]{LW} gives a classification of 
the indecomposable {\em prinjective} ($=$ projective and injective)
objects. Here is a precise statement of these results:

\begin{theorem}[Losev-Webster]\label{lw2}
Let $\C_k$ be a TPC of $V_k^{\otimes n}$.
Denote its distinguished irreducible objects by
$\{L_k(\bb)\:|\:\bb \in \B_k\}$.
\begin{enumerate}
\item
The associated crystal is the crystal structure on $\B_k$ defined in $\S$\ref{scr}.
This means that $F_i L_k(\bb) \neq 0$ (resp.\ $E_i L_k(\bb) \neq 0$) 
if and only if $\tilde f_i
\bb \neq \varnothing$ (resp. $\tilde e_i \bb \neq \varnothing$), in which case 
$F_i L_k(\bb)$ (resp. $E_i L_k(\bb)$)
has irreducible head and
socle isomorphic to $L_k(\tilde f_i\bb)$
(resp. $L_k(\tilde e_i \bb)$).
\item
The projective cover of $L_k(\bb)$ is injective if and
only if $\bb$ is antidominant, i.e. it is an element of the connected
component $\B_k^\circ$ of the crystal generated by the tuple $\bz_k$
from (\ref{zk}).
\end{enumerate}
\end{theorem}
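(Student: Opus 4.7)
My plan is to invoke Theorem~\ref{uniqueness}, which lets me verify both statements in a single convenient model of a TPC of $V_k^{\otimes n}$: strongly equivariant equivalences preserving the labelling of irreducibles transport both the crystal operators $\tilde f_i, \tilde e_i$ (defined intrinsically through the head/socle of $F_i L_k(\bb)$ and $E_i L_k(\bb)$) and the property of being a prinjective indecomposable. In particular, I may work directly with the given $\C_k$ constructed from $\O$ in $\S$\ref{roger1}.

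For part~(1), the key observation is that inside any TPC each pair $(E_i, F_i)$ satisfies the axioms of an $\mathfrak{sl}_2$-categorification in the sense of Chuang--Rouquier, and from their general theory the induced crystal operators on isomorphism classes of irreducibles are characterized by exactly the head/socle formulae claimed. The resulting crystal on $\B_k$ is fully determined by how $F_i, E_i$ act on standard objects, which by Definition~\ref{tpcdef} matches on the Grothendieck group the action of $f_i, e_i$ on the monomial basis of $V_k^{\otimes n}$ via (\ref{cg}). Since $V_k^{\otimes n}$ is an $n$-fold tensor power of the minuscule crystal $V_k$, Kashiwara's tensor product rule applied iteratively recovers precisely the reduced $i$-signature recipe of $\S$\ref{scr}.

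For part~(2), I would first observe using Lemma~\ref{spequiv} that $\bz_k = (1-k, \ldots, 1-k)$ is the unique minimum of $(\B_k, \preceq)$: one computes $N_{[1,s]}(\bz_k, i) = -s$ for all $i \in I_k$ and $s$, and the constraint $N_{[1,n]}(\ba, k-1) = -n$ combined with $\ba \in \B_k$ forces $a_r = 1-k$ for every $r$. By Corollary~\ref{bruhat} and BGG reciprocity in $\C_k$, this minimality yields $M_k(\bz_k) = L_k(\bz_k) = P_k(\bz_k)$; the self-duality of simples under the duality on $\C_k$ then shows that $P_k(\bz_k)$ is also injective. The translation functors $F_i$ and $E_i$ are biadjoint to exact functors, so they preserve both projectives and injectives, and hence the class of prinjective indecomposables. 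Combining this with part~(1), applying sequences of $F_i$'s and $E_i$'s to the prinjective $P_k(\bz_k)$ produces prinjective objects whose indecomposable summands realize $P_k(\bb)$ for every $\bb$ in the crystal-connected-component $\B_k^\circ$.

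The converse direction, namely that $P_k(\bb)$ fails to be injective whenever $\bb \notin \B_k^\circ$, is the main obstacle. The plan is a Grothendieck-group argument: since translation functors preserve prinjectives, the classes of indecomposable prinjectives span a $U(\mathfrak{sp}_{2k})$-submodule of $K_0(\C_k) \otimes_{\Z} \CC \cong V_k^{\otimes n}$; this submodule contains $[P_k(\bz_k)] = v_{\bz_k}$, a highest weight vector of weight $-n\eps_{k-1}$ generating an irreducible submodule of dimension $|\B_k^\circ|$. To complete the converse I must show that the number of indecomposable prinjectives is exactly $|\B_k^\circ|$. I would approach this step following Losev--Webster by analysing the highest weight vectors in $V_k^{\otimes n}$ against the combinatorics of antidominant tuples, or alternatively by identifying the additive category of prinjectives with modules over an explicit endomorphism ring whose rank can be computed via categorical dimensions.
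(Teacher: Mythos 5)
There is a genuine gap — in fact two, and they sit exactly where the paper itself does no work: the paper does not prove this theorem but quotes it from Losev--Webster, part (1) being \cite[Theorem 7.2]{LW} and part (2) being \cite[Proposition 5.2]{LW}. Your attempt to reprove it from scratch founders at precisely those points. In part (1), the step ``the resulting crystal on $\B_k$ is fully determined by how $F_i, E_i$ act on standard objects'' is unjustified. The axioms of Definition~\ref{tpcdef} only prescribe the classes $[F_i\Delta(\bb)]$, $[E_i\Delta(\bb)]$ in the Grothendieck group, i.e.\ the decategorified action (\ref{cg}); this does not determine the head or socle of $F_i L_k(\bb)$, which is what the crystal records. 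Chuang--Rouquier's $\mathfrak{sl}_2$-theory (or \cite[$\S$4.4]{BD1}) does give you that these heads/socles are simple and define \emph{some} crystal on $\B_k$, but identifying that abstract crystal with the reduced-signature crystal of $\S$\ref{scr} is the entire content of \cite[Theorem 7.2]{LW}; its proof uses the internal structure of the TPC (the standard filtrations, the categorical splitting into ``hierarchy'' subquotients), not a $K_0$ computation plus Kashiwara's tensor product rule.

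In part (2), your forward direction is essentially sound, modulo a wording issue: $\bz_k$ is not the minimum of $(\B_k,\preceq)$ but is \emph{isolated} — your own computation with Lemma~\ref{spequiv} shows no other $\ba \in \B_k$ satisfies the $s=n$ equalities, equivalently the $-n\eps_{k-1}$ weight space of $V_k^{\otimes n}$ is one-dimensional — and it is this isolation (via BGG reciprocity and Corollary~\ref{bruhat}) that forces $M_k(\bz_k)=L_k(\bz_k)=P_k(\bz_k)$; genuine minimality alone would only give $M_k=L_k$, not $P_k=M_k$. Granting that, transporting prinjectivity along the crystal by applying $F_i, E_i$ does show $P_k(\bb)$ is prinjective for all antidominant $\bb$. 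But the converse — that $P_k(\bb)$ is \emph{not} injective when $\bb \notin \B_k^\circ$ — is exactly \cite[Proposition 5.2]{LW}, and you concede it is not proved: your Grothendieck-group plan still needs (a) that the classes of indecomposable prinjectives are linearly independent, and (b) the upper bound that there are at most $|\B_k^\circ|$ of them, i.e.\ that the span of prinjective classes is no larger than the irreducible $\mathfrak{sp}_{2k}$-submodule generated by $v_{\bz_k}$; neither of the two strategies you sketch is carried out, and neither is routine. As it stands the proposal does not establish the theorem; the intended (and the paper's) route is simply to invoke Losev--Webster.
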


Using also Theorem~\ref{fk} and letting $k \rightarrow \infty$, we get the
following corollary, which extends this result to infinite rank.

\begin{corollary}\label{thegraph}
Let $\C$ be a TPC of $V^{\otimes n}$ with irreducible objects
$\{L(\bb)\:|\:\bb \in \B\}$ (e.g., the category $\O$ from Theorem~\ref{typec}).
\begin{enumerate}
\item
The associated crystal is the crystal structure on $\B$ defined in $\S$\ref{scr}.
\item
The projective cover of $L(\bb)$ is injective if and
only if $\bb$ is antidominant.
\end{enumerate}
\end{corollary}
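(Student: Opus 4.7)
Fix $\bb \in \B$. Since for each $\gamma \in P$ the set $\{\ba \in \B : |\WT(\ba)| = \gamma\}$ is finite, I can choose $k$ large enough that $\bb \in \B_k$ and that every $\ba$ with $|\WT(\ba)|$ equal to one of the finitely many weights $|\WT(\bb)|$ or $|\WT(\bb)| - \alpha_i$ (for $i \in I_k$) belongs to $\B_k$. By Corollary~\ref{split}, the modules $P(\bb)$, $I(\bb) := P(\bb)^\star$, $F_i L(\bb)$, and $E_i L(\bb)$ (for $i \in I_k$) then have every composition factor indexed by $\B_k \subseteq \B_{\leq k} \setminus \B_{<k}$; in particular each lies in $\C_{\leq k}$ and has no subobject or quotient in $\C_{<k}$.

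Standard properties of the Serre quotient $\pi_k : \C_{\leq k} \to \C_k$ then imply that for any two such $M,N$ the natural map $\Hom_\C(M,N) \to \Hom_{\C_k}(\pi_k M, \pi_k N)$ is an isomorphism, and that $\pi_k$ is exact and commutes with $\star$, with $F_i$, and with $E_i$ for $i \in I_k$ (the last by the proof of Theorem~\ref{fk}). Applying this and using that $\End_{\C_k}(\pi_k P(\bb)) = \End_\C(P(\bb))$ is local, I conclude $\pi_k P(\bb) = P_k(\bb)$, and dually $\pi_k I(\bb) = I_k(\bb)$, and similarly $\pi_k F_i L(\bb) = F_i L_k(\bb)$, $\pi_k E_i L(\bb) = E_i L_k(\bb)$. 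Part~(1) now follows by applying Theorem~\ref{lw2}(1) to $\C_k$ to describe the heads and socles of $F_i L_k(\bb)$ and $E_i L_k(\bb)$ via the $\mathfrak{sp}_{2k}$-crystal on $\B_k$, transporting the result back to $\C$ via the Hom-identification, and observing directly from the recipe in $\S$\ref{scr} that for $\bb \in \B_k$ and $i \in I_k$ the $\mathfrak{sp}_{2k}$-crystal operators on $\B_k$ agree with the restrictions of the $\mathfrak{sp}_{2\infty}$-crystal operators on $\B$ and keep the result inside $\B_k$ whenever non-empty (since $\tilde f_i$ or $\tilde e_i$ alters a single entry from $\pm i$ to $\pm i + 1$ with $i < k$).

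For part~(2), since $L(\bb)$ has one-dimensional endomorphisms, $P(\bb)$ is injective in $\C$ iff $P(\bb) \cong I(\bb)$. By the Hom-identification this is equivalent to $P_k(\bb) \cong I_k(\bb)$ in $\C_k$, which by Theorem~\ref{lw2}(2) holds iff $\bb \in \B_k^\circ = \B_k \cap \B^\circ$; by Lemma~\ref{conncomp} this is iff $\bb$ is antidominant. The main obstacle is the initial finiteness argument: one must verify that a single $k$ can be chosen which simultaneously absorbs the composition factors of all the finitely many modules in play, so that everything really lives in $\C_{\leq k}$ with no subobject or quotient in $\C_{<k}$. Once this is in place, the reduction to $\C_k$ and the invocation of the Losev-Webster results is essentially formal.
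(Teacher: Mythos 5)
Your overall strategy is the paper's: truncate to $\C_k$ via Theorem~\ref{fk}, identify the images of $P(\bb)$, $F_iL(\bb)$, $E_iL(\bb)$ with the corresponding objects of $\C_k$, and then quote Theorem~\ref{lw2} together with Lemma~\ref{conncomp}. However, the opening step on which your reduction rests is false: for $n\geq 2$ the set $\{\ba\in\B\:|\:|\WT(\ba)|=\gamma\}$ is in general \emph{infinite}, because cancelling pairs are possible. For instance, with $n=2$ and $\gamma=0$, every tuple $(j+1,-j)$ with $j\geq 0$ satisfies $|\WT(\ba)|=\eps_j-\eps_j=0$; more generally $\{\ba\:|\:|\WT(\ba)|=\gamma\}$ is infinite whenever it contains an atypical tuple. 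Consequently no $k$ with the property you demand exists in general, and your condition is in any case self-referential, since the list of weights $|\WT(\bb)|-\alpha_i$ for $i\in I_k$ grows with $k$.

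The gap is easily repaired, and the repair is exactly what the paper does: you never need all of $\{\ba\:|\:|\WT(\ba)|=\gamma\}$ to lie in $\B_k$, only the labels of the composition factors of the finitely many objects actually in play. Fix $i\in I$ and $\bb$ (the crystal statement is checked one $i$ at a time); since $\C$ is Schurian, the objects $F_iL(\bb)$, $E_iL(\bb)$, $P(\bb)$ and the injective hull of $L(\bb)$ have finite length, so their composition factors carry only finitely many labels, and any finite subset of $\B$ is contained in $\B_k$ for $k\gg 0$ (choosing $k$ also so that $i\in I_k$ and $\bb\in\B_k$). With that choice of $k$, the rest of your argument — the Hom-isomorphism onto the Serre quotient for objects with no subobject or quotient in $\C_{<k}$, the identifications of the images with $P_k(\bb)$, $I_k(\bb)$, $F_iL_k(\bb)$, $E_iL_k(\bb)$, and the observation that the $\mathfrak{sp}_{2k}$-crystal operators agree with the restrictions of the $\mathfrak{sp}_{2\infty}$-ones — goes through and coincides in substance with the paper's proof. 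One further small point: a general TPC $\C$ carries no duality $\star$, so you should define $I(\bb)$ as the injective hull of $L(\bb)$ (which exists since $\C$ is Schurian) rather than as $P(\bb)^\star$; nothing else in your part~(2) changes.
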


\begin{proof}
For (1), choose $k$ so that $i \in I_k$ and 
all of the composition factors of $F_i L(\bb)$ have label belonging to
$\B_k$.
Then, $F_i L(\bb) \in \ob \C_{\leq k}$, and its socle and head can be determined by
passing to the quotient category $\C_k$, where the result follows from Theorem~\ref{lw2}(1).
For (2), choose $k$ so that all composition factors of  the
projective cover of $L(\bb)$ have label belonging
to $\B_k$. Then we get done by Theorem~\ref{lw2}(2), since an object
of $\C$ with composition factors labelled by $\B_k$ is projective or
injective in $\C$ if and only if its image is projective or injective
in $\C_k$. 
\end{proof}

\section{Category $\mathcal F$}
To conclude the article, we formulate and prove a generalization of
\cite[Conjecture 5.13]{CKW}, then deduce some consequences for the
structure of the category $\mathcal F$ of finite-dimensional
half-integral weight $\g$-supermodules.
Throughout this section, $I$ denotes $\N$ and $I_0 := \Z_+$, 
i.e. they are the index sets for the simple roots of 
$\mathfrak{sp}_{2\infty}$ and $\mathfrak{sl}_{+\infty}$, respectively.

\subsection{\boldmath 
Truncation from $\mathfrak{sp}_{2\infty}$ to
$\mathfrak{sl}_{+\infty}$}\label{roger2}
Recall the $\mathfrak{sl}_{+\infty}$-module $V_0^{\otimes \bsigma}$
from $\S$\ref{tp}. We gave two different realizations of that, one as
a submodule of the $\mathfrak{sl}_\infty$-module $V^{\otimes
  \bsigma}$,
the other as a submodule of the $\mathfrak{sp}_{2\infty}$-module
$V^{\otimes n}$.
In turn, categorifications of $V_0^{\otimes \bsigma}$
can be constructed either by truncating from
a TPC of the $\mathfrak{sl}_\infty$-module $V^{\otimes\bsigma}$
as explained in \cite[$\S$2.8]{BLW}, or by truncating from a TPC of the
$\mathfrak{sp}_{2\infty}$-module $V^{\otimes n}$. In this subsection, we are going to
follow the latter route.

We begin with a couple more definitions. 
The {\em quiver Hecke category} $\QH_0$ of type
$\mathfrak{sl}_{+\infty}$ may be identified with the full subcategory
of the quiver Hecke category $\QH$
of type $\mathfrak{sp}_{2\infty}$ from Definition~\ref{QH1}
whose objects are monoidally generated by $I_0 \subset I$.

\begin{definition}\label{tpcdef2}
Fix $\bsigma = (\sigma_1,\dots,\sigma_n) \in \{\pm\}^n$.
A {\em TPC} of the $\mathfrak{sl}_{+\infty}$-module
$V_0^{\otimes \bsigma}$
is the following data:
\begin{itemize}
\item
a highest weight category $\mathcal C$ 
with 
standard objects $\{\Delta(\bb)\:|\:\bb \in \B_0\}$
indexed by the set $\B_0$ from (\ref{B0})
ordered according to the Bruhat order $\preceq_\bsigma$ from (\ref{o600});
\item
adjoint pairs $(F_i, E_i)$ of endofunctors of $\mathcal C$ for each $i
\in I_0$;
\item
a strict monoidal functor $\Phi:\QH_0 \rightarrow
\mathcal{E}nd(\C)$ with $\Phi(i) = F_i$ for each $i \in I_0$.
\end{itemize}
We impose the following additional axioms for all $i
\in I_0$ and $\bb \in \B_0$:
\begin{itemize}
\item
$E_i$ is isomorphic to a left adjoint
of $F_i$;
\item
$F_i \Delta(\bb)$ 
has a filtration with sections 
$\{\Delta(\bb+\sigma_t\bd_t)\:|\: 1 \leq t \leq n,
\isig_t^\bsigma(\bb) = \mathtt{f}\}$;
\item
$E_i \Delta(\bb)$ 
has a filtration with sections 
$\{\Delta(\bb-\sigma_t\bd_t)\:|\:1 \leq t \leq n, \isig_t^\bsigma(\bb) =
\mathtt{e}\}$;
\item
the natural transformation
$\Phi\Big(
\mathord{
\begin{tikzpicture}[baseline = -2]
	\draw[-,thick,darkred] (0.08,-.15) to (0.08,.3);
      \node at (0.08,0.05) {$\color{darkred}\bullet$};
   \node at (0.08,-.25) {$\scriptstyle{i}$};
\end{tikzpicture}
}
\Big)$ is {locally nilpotent}.
\end{itemize}
\end{definition}

View $\B$ as a
poset via the $\mathfrak{sp}_{2\infty}$-Bruhat order from
(\ref{o700}).
Recalling (\ref{N2}),
let $\B_{\leq \bsigma}$ be the set of all $\bb \in \B$ such that
$N_{[1,s]}(\bb, 0) \leq \sigma_1+\cdots+\sigma_s$ for $s=1,\dots, n-1$
and $N_{[1,n]}(\bb,0) = \sigma_1+\cdots+\sigma_n$.
Let $\B_{< \bsigma}$ be the set of all $\bb \in \B_{\leq \bsigma}$
such that $N_{[1,s]}(\bb,0) < \sigma_1+\cdots+\sigma_s$ for at least
one $s$.
Lemma~\ref{spequiv} implies that these are both ideals in $\B$.
Moreover, the set difference $\B_{\leq \bsigma} \setminus \B_{<
  \bsigma}$
is precisely the index set $\B_\bsigma$.

Now let $\C$ be a TPC of $V^{\otimes n}$ in the sense of
Definition~\ref{tpcdef}.
Let $\C_{\leq \bsigma}$ 
and $\C_{< \bsigma}$ be the Serre subcategories of $\C$
corresponding to the ideals $\B_{\leq \bsigma}$ and $\B_{< \bsigma}$,
respectively.
Then form the Serre quotient
$\C_\bsigma := \C_{\leq \bsigma} / \C_{< \bsigma}$.
This has a naturally induced structure of highest weight category with
weight poset $(\B_{\bsigma}, \preceq)$.
Its irreducibles $\{L_\bsigma(\bb)\:|\:\bb \in \B_\sigma\}$ 
are the canonical images of the $L(\bb)$'s.
The following parallels Theorem~\ref{fk}.

\begin{theorem}\label{jordan}
The subquotient $\C_\bsigma$ of $\C$ admits the structure of a
TPC of $V_0^{\otimes\bsigma}$.
\end{theorem}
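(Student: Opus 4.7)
The plan is to mimic the proof of Theorem~\ref{fk} closely, transporting the induced highest weight structure on $\C_\bsigma$ across the bijection $\B_\bsigma \stackrel{\sim}{\to} \B_0$, $\bb \mapsto \bb'$ from (\ref{primemap}). By Lemma~\ref{threw} this bijection is a poset isomorphism $(\B_\bsigma, \preceq) \stackrel{\sim}{\to} (\B_0, \preceq_\bsigma)$, so after relabelling the standard objects as $\Delta_\bsigma(\bb') := \Delta(\bb)$ (for $\bb \in \B_\bsigma$), the underlying highest weight data are exactly what Definition~\ref{tpcdef2} demands.

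The first substantive step is to show that $F_i$ and $E_i$ for $i \in I_0$ leave both $\C_{\leq \bsigma}$ and $\C_{<\bsigma}$ invariant. As in Theorem~\ref{fk}, this reduces via the filtration axioms for $\C$ to a check on standards, which in turn reduces to the combinatorial observation that
$$
N_{[1,s]}(\bb \pm \bd_r, 0) = N_{[1,s]}(\bb, 0)
$$
whenever $i \geq 1$ and $\isig_r(\bb) \in \{\mathtt{e}, \mathtt{f}\}$. This holds because $b_r = \pm i$ with $i \geq 1$ forces $b_r \pm 1$ to lie on the same side of $0$ as $b_r$, so the sign counts in (\ref{N2}) with $i=0$ are not disturbed. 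Consequently $F_i, E_i$ descend to biadjoint endofunctors of $\C_\bsigma$, and the monoidal functor $\Phi$ restricts along $\QH_0 \hookrightarrow \QH$ and then descends to a strict monoidal functor $\Phi_\bsigma: \QH_0 \to \mathcal{E}nd(\C_\bsigma)$ with $\Phi_\bsigma(i) = F_i$; local nilpotency of the dot generator survives passage to the quotient.

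The remaining task is to match the Verma filtrations. For $\bb \in \B_\bsigma$ and $i \in I_0$, a case analysis on $\sigma_t$ and the sign of $b_t$ shows that $\isig_t(\bb) = \mathtt{f}$ if and only if $\isig_t^\bsigma(\bb') = \mathtt{f}$, and when this common condition holds one has $\bb + \bd_t \in \B_\bsigma$ with $(\bb + \bd_t)' = \bb' + \sigma_t \bd_t$. Hence the filtration of $F_i \Delta(\bb)$ inside $\C$ furnished by the TPC axioms for $V^{\otimes n}$ descends in $\C_\bsigma$ to precisely the filtration of $F_i \Delta_\bsigma(\bb')$ required by Definition~\ref{tpcdef2}; the analysis for $E_i$ is entirely symmetric.

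The only potentially delicate point is this last case analysis, and the key subtlety is that $i \geq 1$ is essential: for $i = 0$ the boundary value $b_t = 0$ would allow $\bb + \bd_t$ to escape $\B_\bsigma$, and the signatures $\isig_t$ and $\isig_t^\bsigma$ would also cease to match. Once this is in hand, everything else is routine and parallels the proof of Theorem~\ref{fk}.
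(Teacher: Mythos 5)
Your proposal is correct and follows essentially the same route as the paper's proof: invariance of $\C_{\leq\bsigma}$ and $\C_{<\bsigma}$ under $E_i,F_i$ ($i\in I_0$) via the invariance of $N_{[1,s]}(\cdot,0)$ checked on standards as in Theorem~\ref{fk}, together with Lemma~\ref{threw} and the matching $\isig_t(\bb)=\isig^\bsigma_t(\bb')$, $(\bb+\bd_t)'=\bb'+\sigma_t\bd_t$ to identify the filtrations required by Definition~\ref{tpcdef2}. Your explicit remark that $i\geq 1$ is what keeps $\bb\pm\bd_t$ inside $\B_\bsigma$ is exactly the key point the paper relies on.
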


\begin{proof}
Like in \cite[$\S$2.5]{BLW}, $\C_{\bsigma}$
is a highest weight category with weight poset $(\B_\sigma, \preceq)$, which is
isomorphic to $(\B_0, \preceq_{\bsigma})$ thanks to Lemma~\ref{threw}.
Also, the endofunctors $E_i, F_i$ for $i \in I_0$
leave both $\C_{\leq \bsigma}$ and $\C_{< \bsigma}$ invariant,
hence, they induce endofunctors of $\C_{\bsigma}$.
This follows by a similar argument to the proof of Theorem~\ref{fk};
the key point this time is that $\bb \in \B_0$ satisfies
$$
N_{[1,s]}(\bb\pm\bd_r,0) = N_{[1,s]}(\bb,0)
$$
whenever $\isig_r(\bb) \in \{\mathtt{e}, \mathtt{f}\}$
for some $i \in I_0$.
We should also note for $i \in I_0$, $\bb \in \B_\bsigma$, and
$\bb' \in \B_0$ defined via (\ref{restrict}) that:
\begin{itemize}
\item
$\isig_t(\bb) = \mathtt{e}$ (resp. $\mathtt{f}$) if and only if
$\isig^\bsigma_t(\bb') = \mathtt{e}$ (resp. $\mathtt{f}$);
\item $(\bb \pm \sigma_t \bd_t)' = \bb' \pm \bd_t$.
\end{itemize}
This follows from Lemma~\ref{restrict} using (\ref{haha}) and (\ref{cg}).
\end{proof}

The extremal choices for $\bsigma$ deserve some special mention.
For $\bsigma$ as in the following lemma, the subquotient $\C_\bsigma$
of Theorem~\ref{jordan}
may be identified with a subcategory of $\C$.

\begin{lemma}\label{ex1}
Suppose that
$\bsigma = (-,\dots,-,+,\dots,+)$ with 
$n_1$ entries equal to $-$ followed by $n_0$ entries equal to $+$.
Then 
$\B_\bsigma$ is an ideal in $\B$.
\end{lemma}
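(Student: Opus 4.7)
The plan is to exploit the characterization of the $\mathfrak{sp}_{2\infty}$-Bruhat order given by Lemma~\ref{spequiv} at the single index $i=0$. The point is that for the very special $\bsigma$ in the statement, the function $s \mapsto N_{[1,s]}(\bb,0)$ takes its extremal possible value on $\B_\bsigma$, so any $\ba$ below such a $\bb$ is forced to agree.

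More precisely, suppose $\bb \in \B_\bsigma$. A direct computation from \eqref{N2} gives $N_{[1,s]}(\bb,0) = -s$ for $1 \leq s \leq n_1$, and $N_{[1,s]}(\bb,0) = s - 2n_1$ for $n_1 < s \leq n$. Now let $\ba \preceq \bb$. Lemma~\ref{spequiv} yields $N_{[1,s]}(\ba,0) \leq N_{[1,s]}(\bb,0)$ for $s < n$ and $N_{[1,n]}(\ba,0) = N_{[1,n]}(\bb,0) = n - 2n_1$. Since in general $N_{[1,s]}(\ba,0) \geq -s$ (it is a sum of $s$ terms each $\geq -1$), the inequality at $s \leq n_1$ forces $N_{[1,s]}(\ba,0) = -s$, which can only happen if $a_1,\dots,a_{n_1} \leq 0$. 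Meanwhile, the equality at $s=n$ says that the total number of indices $r$ with $a_r > 0$ is exactly $n_0$. Combined with $a_1,\dots,a_{n_1} \leq 0$, this forces $a_{n_1+1},\dots,a_n > 0$, so $\ba \in \B_\bsigma$.

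I do not foresee any serious obstacle: the argument is a short bookkeeping exercise built on top of Lemma~\ref{spequiv}. The only point that requires a moment's care is noting that the inequalities in the Bruhat order are pinned by the trivial lower bound $N_{[1,s]}(\ba,0) \geq -s$ precisely when $\bb$ has all its first $n_1$ entries non-positive; it is exactly this choice of $\bsigma$ (with all the minus signs at the beginning) that makes $N_{[1,s]}(\bb,0)$ extremal for each small $s$. For any other ordering of the signs one would not get an ideal, which is consistent with the whole section's strategy of using this particular $\bsigma$ to realize $\C_\bsigma$ as an honest subcategory of $\C$ rather than only a subquotient.
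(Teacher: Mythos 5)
Your proof is correct and follows essentially the same route as the paper: both arguments rest on Lemma~\ref{spequiv} at $i=0$, using that $N_{[1,s]}(\cdot,0)\geq -s$ with equality forcing the first $s$ entries to be $\leq 0$, together with the equality at $s=n$ pinning the total number of positive entries. The only cosmetic difference is packaging: the paper proves the identity $\B_\bsigma=\B_{\leq\bsigma}$ and invokes the already-established fact that $\B_{\leq\bsigma}$ is an ideal, whereas you verify downward closure under $\preceq$ directly.
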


\begin{proof}
We actually show that $\B_{\bsigma} = \B_{\leq \bsigma}$, which is an ideal.
Take $\ba \in \B_{\leq \bsigma}$.
Since $N_{[1,n]}(\ba,0) = n_0-n_1$, exactly $n_1$ of the entries of $\ba$ are
$\leq 0$.
Since $N_{[1,n_1]}(\ba,0) \leq -n_1$, these must constitute the first $n_1$
entries of $\ba$. Hence, $\ba \in \B_\bsigma$.
\end{proof}

At the other extreme, for $\bsigma$ as in the next lemma, 
the subquotient $\C_\bsigma$ may be identified with a quotient of $\C$ itself.

\begin{lemma}\label{drive}
Suppose that
$\bsigma = (+,\dots,+,-,\dots,-)$ with 
$n_0$ entries equal to $+$ followed by $n_1$ entries equal to $-$.
Then $\B_\bsigma$ is a coideal (upper set) in $\B$.
\end{lemma}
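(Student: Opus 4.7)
The plan is to verify the upper-set property by direct computation using the explicit characterization of the $\mathfrak{sp}_{2\infty}$-Bruhat order from Lemma~\ref{spequiv}, specialised at $i=0$. So suppose $\bb \in \B_\bsigma$ and $\ba \succeq \bb$; I aim to show $\ba \in \B_\bsigma$.

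First I would compute $N_{[1,s]}(\bb,0)$ explicitly. Since $b_1,\dots,b_{n_0} > 0$ and $b_{n_0+1},\dots,b_n \leq 0$, one sees from (\ref{N2}) that
\begin{equation*}
N_{[1,s]}(\bb,0) = \begin{cases} s & \text{if } s \leq n_0,\\ 2n_0 - s & \text{if } s > n_0. \end{cases}
\end{equation*}
In particular $N_{[1,n]}(\bb,0) = n_0 - n_1$. By Lemma~\ref{spequiv}, we then have $N_{[1,s]}(\ba,0) \geq N_{[1,s]}(\bb,0)$ for each $s < n$ and $N_{[1,n]}(\ba,0) = n_0 - n_1$.

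Next I would combine these inequalities with the trivial observation that $|N_{[1,s]}(\ba,0)| \leq s$, which follows since each of the $s$ indices $r=1,\dots,s$ contributes $\pm 1$ or $0$ to the defining sum in (\ref{N2}). For $s \leq n_0$ this pins down $N_{[1,s]}(\ba,0) = s$, which forces $a_r > 0$ for all $r \leq n_0$. For $s > n_0$, I would instead track the increments: the step $N_{[1,s]}(\ba,0) - N_{[1,s-1]}(\ba,0)$ is $+1$ if $a_s > 0$ and $-1$ if $a_s \leq 0$. Since the running value jumps from $n_0$ at $s=n_0$ down to $n_0 - n_1$ at $s=n$, the sum of the $n_1$ consecutive increments equals $-n_1$; as each is $\pm 1$, every one of them must equal $-1$. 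This gives $a_r \leq 0$ for all $n_0 < r \leq n$, so $\ba \in \B_\bsigma$, completing the proof.

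The argument is essentially a telescoping/extremal-count argument, and I do not anticipate any real obstacle; the only point that needs care is to note that the two defining conditions on $\bsigma$ (the first block being $+$ and the second block being $-$) correspond precisely to the running maximum behaviour of $N_{[1,s]}(\bb,0)$, which is why the $\geq$ inequalities in the Bruhat order can only be sharpened, not violated, when we pass from $\bb$ to a larger $\ba$.
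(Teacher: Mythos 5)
Your proof is correct. You verify the coideal property directly from Lemma~\ref{spequiv} at $i=0$: the inequalities $N_{[1,s]}(\ba,0)\geq N_{[1,s]}(\bb,0)$ together with the trivial bound $N_{[1,s]}(\ba,0)\leq s$ force $N_{[1,s]}(\ba,0)=s$ for $s\leq n_0$ (so the first $n_0$ entries of $\ba$ are positive), and then the telescoping of the $\pm 1$ increments from $s=n_0$ to $s=n$, constrained by $N_{[1,n]}(\ba,0)=n_0-n_1$, forces the remaining entries to be $\leq 0$; all steps check out, including the edge cases $n_0\in\{0,n\}$, and the unused parity condition is harmless. The paper argues differently in structure, though from the same $i=0$ counting data: it first establishes the description (\ref{bug}) of $\B_{\leq\bsigma}$ as the level set $\{\ba\::\:N_{[1,n]}(\ba,0)=n_0-n_1\}$ (via a permutation argument for the converse inclusion), then notes that $\succeq$ preserves $N_{[1,n]}(\cdot,0)$, so any $\bb\succeq\ba$ with $\ba\in\B_\bsigma$ lies in $\B_{\leq\bsigma}$, and finishes by invoking the previously recorded fact that $\B_\bsigma=\B_{\leq\bsigma}\setminus\B_{<\bsigma}$ is a coideal in $\B_{\leq\bsigma}$. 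Your route is more self-contained: it never mentions $\B_{\leq\bsigma}$ or $\B_{<\bsigma}$ and does not lean on the earlier assertion that those sets are ideals. What the paper's route buys is the explicit characterization (\ref{bug}), which is reused later (in the proof of Lemma~\ref{crap}) to identify $\B_{n_0|n_1}$ with $\B_{\leq\bsigma}$; your argument, while cleaner for the lemma in isolation, would leave that byproduct still to be proved.
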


\begin{proof}
We first observe that
\begin{equation}\label{bug}
\B_{\leq\bsigma} = \left\{\ba \in \B\:\big|\:
N_{[1,n]}(\ba,0) = n_0-n_1
\right\}.
\end{equation}
To see this,
any $\ba \in \B_{\leq \bsigma}$
satisfies $N_{[1,n]}(\ba,0) = \sigma_1+\cdots+\sigma_n = n_0-n_1$.
Conversely, if
$N_{[1,n]}(\ba,0) = n_0-n_1$, then
exactly $n_0$ of the entries of $\ba$ are
$> 0$ and $n_1$ entries are $\leq 0$.
Permuting the positive entries to the beginning makes the
numbers
$N_{[1,s]}(\ba,0)$ bigger, hence, $N_{[1,s]}(\ba,0) \leq
\sigma_1+\cdots+\sigma_s$ for all $s$. This shows $\ba \in
\B_{\leq\bsigma}$.

Now we can show that $\B_\bsigma$ is a coideal. Suppose that $\ba
\in \B_\bsigma$ and $\bb \succeq \ba$. Then $N_{[1,n]}(\bb,0) =
N_{[1,n]}(\ba,0)$, hence, $\bb \in \B_{\leq \bsigma}$. Since
$\B_\bsigma$ is a coideal in $\B_{\leq \bsigma}$, this implies that
$\bb \in \B_\bsigma$.
\end{proof}

\subsection{Proof of the second Cheng-Kwon-Wang conjecture}
TPCs of $V^{\otimes \bsigma}$ and $V_0^{\otimes \bsigma}$ are studied
in detail in \cite{BLW}. 
Combining results established there with Theorem~\ref{jordan}
and our main categorification theorem, recalling the
definition of the canonical and orthodox bases from
(\ref{pilly})--(\ref{polly}) and Theorem~\ref{lastyear},
we obtain the following:

\begin{theorem}\label{thethm}
Given $\bb \in \B$, define $\bsigma$ so that 
$\bb \in \B_\bsigma$, i.e. we take
$\sigma_r := +$ if
  $b_r > 0$ or $\sigma_r := -$ if $b_r \leq 0$.
Then, 
$\pr_\bsigma o_\bb = \pr_\bsigma c_\bb = \pr_0 c_{\bb'}^\bsigma$.
\end{theorem}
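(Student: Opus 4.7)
My plan is to derive both equalities from results already in place. The second equality $\pr_\bsigma c_\bb = \pr_0 c_{\bb'}^\bsigma$ is simply Lemma~\ref{ckwp} specialized at $q=1$, so the substantive task is to establish $\pr_\bsigma o_\bb = \pr_0 c_{\bb'}^\bsigma$.

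First, I would unpack the left-hand side using BGG reciprocity in $\O$. Since $o_\bb = \iota([P(\bb)])$ by Theorem~\ref{lastyear} and $[P(\bb)] = \sum_\ba [M(\ba) : L(\bb)] [M(\ba)]$ in $K_0(\O^\Delta)$ by BGG reciprocity, the definition of $\pr_\bsigma$ from \S\ref{notabasis} gives
$$
\pr_\bsigma o_\bb = \sum_{\ba \in \B_\bsigma} [M(\ba) : L(\bb)]\, v^\bsigma_{\ba'}.
$$

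Second, I would pass to the subquotient $\O_\bsigma = \O_{\leq\bsigma}/\O_{<\bsigma}$, which by Theorem~\ref{jordan} carries the structure of a TPC of the $\mathfrak{sl}_{+\infty}$-module $V_0^{\otimes\bsigma}$. Under the bijection $\B_\bsigma \to \B_0,\,\ba \mapsto \ba'$ (which is a poset isomorphism by Lemma~\ref{threw}), the image of $M(\ba)$ under the quotient functor is the standard object $\Delta_\bsigma(\ba')$ and the image of $L(\ba)$ is the irreducible $L_\bsigma(\ba')$. The standard theory of Serre subquotients of highest weight categories (as recalled in \cite[\S2.5]{BLW}) gives $[M(\ba) : L(\bb)] = [\Delta_\bsigma(\ba') : L_\bsigma(\bb')]$ whenever $\ba,\bb \in \B_\bsigma$. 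Applying BGG reciprocity a second time, this time inside $\O_\bsigma$, and letting $\iota_\bsigma$ denote the analog of $\iota$ for $\O_\bsigma$, the previous display rewrites as
$$
\pr_\bsigma o_\bb \;=\; \sum_{\ba' \in \B_0} [\Delta_\bsigma(\ba') : L_\bsigma(\bb')]\, v^\bsigma_{\ba'}
\;=\; \iota_\bsigma\bigl([P_\bsigma(\bb')]\bigr),
$$
where $P_\bsigma(\bb')$ is the projective cover of $L_\bsigma(\bb')$ in $\O_\bsigma$.

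Finally, I would invoke the Kazhdan--Lusztig-type theorem for $\mathfrak{sl}_{+\infty}$-TPCs of tensor products of minuscule representations proved in \cite{BLW}, which asserts that classes of indecomposable projectives in any such TPC map under the canonical isomorphism to Lusztig's canonical basis elements. Applied to $\O_\bsigma$, this gives $\iota_\bsigma([P_\bsigma(\bb')]) = \pr_0 c^\bsigma_{\bb'}$, completing the proof. The main technical point to be careful about is matching Definition~\ref{tpcdef2} for $\O_\bsigma$ with the conventions of \cite{BLW}; the key combinatorial compatibilities (the translation of signatures and crystal operators under $\bb \mapsto \bb'$) are exactly those recorded in the proof of Theorem~\ref{jordan}, so this should pose no real obstacle.
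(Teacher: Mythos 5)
Your proposal is correct and follows essentially the same route as the paper: pass to the subquotient $\O_\bsigma$ of Theorem~\ref{jordan}, identify the class of the relevant indecomposable projective there with $\pr_0 c^\bsigma_{\bb'}$ via the Kazhdan--Lusztig-type theorem of \cite{BLW} (the paper cites \cite[Corollary 5.30]{BLW}), and get the second equality from Lemma~\ref{ckwp}. The only difference is presentational: your double application of BGG reciprocity spells out at the level of Grothendieck groups what the paper compresses into the assertion that the canonical image of $P(\bb)$ in $\O_\bsigma$ is the indecomposable projective indexed by $\bb'$.
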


\begin{proof}
Remembering that $\O$ is a TPC of $V^{\otimes n}$ thanks to Theorem~\ref{typec},
let $\O_\bsigma := \O_{\leq \bsigma} / \O_{< \bsigma}$ be constructed
from $\O$ as in Theorem~\ref{jordan}.
For $\bb \in \B_\bsigma$,
the canonical image of $P(\bb)$ in the quotient category
$\O_\bsigma$
is the indecomposable projective object of this TPC of
$V_0^{\otimes\bsigma}$ indexed by $\bb'$.
By \cite[Corollary 5.30]{BLW}, its isomorphism class is identified
with $\pr_0
c_{\bb'} \in V_0^{\otimes\bsigma}$.
In view of the definition of $o_\bb$ from Theorem~\ref{lastyear}, this shows
that
$\pr_\bsigma o_\bb = \pr_0 c_{\bb'}$.
This 
equals $\pr_\bsigma c_\bb$
thanks to Lemma~\ref{ckwp}.
\end{proof}

\begin{corollary}\label{howto}
Suppose that $\ba, \bb \in \B$ have the property that $a_r > 0$ if and
only if $b_r > 0$ for each $r=1,\dots,n$.
Then, 
$(P(\bb):M(\ba)) = [M(\ba):L(\bb)] = d_{\ba,\bb}(1) = d^\bsigma_{\ba',\bb'}(1)$.
\end{corollary}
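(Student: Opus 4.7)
The plan is to read off coefficients from Theorem~\ref{thethm} and invoke BGG reciprocity, all information being packaged in the already-established equality $\pr_\bsigma o_\bb = \pr_\bsigma c_\bb = \pr_0 c_{\bb'}^\bsigma$. First I would observe that the hypothesis $a_r>0 \Leftrightarrow b_r>0$ means precisely that $\ba$ and $\bb$ belong to the \emph{same} set $\B_\bsigma$, where $\sigma_r = +$ if $b_r>0$ and $\sigma_r = -$ if $b_r\leq 0$. This is the condition under which $\pr_\bsigma v_\ba = v_{\ba'}^\bsigma$ is a nonzero basis vector of $V_0^{\otimes\bsigma}$ rather than zero, so that the forthcoming coefficient comparison is meaningful.

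Next I would expand each of the three vectors in the isomorphism given by Theorem~\ref{thethm} against the monomial basis. By the defining identities, $c_\bb = \sum_{\bc \in \B} d_{\bc,\bb}(1) v_\bc$ and $c_{\bb'}^\bsigma = \sum_{\bc' \in \B_0} d_{\bc',\bb'}^\bsigma(1) v_{\bc'}^\bsigma$. For the orthodox basis, BGG reciprocity in the highest weight category $\O$ (as used in the proof of Corollary~\ref{bruhat}) gives $[P(\bb)] = \sum_\bc (P(\bb):M(\bc))[M(\bc)] = \sum_\bc [M(\bc):L(\bb)][M(\bc)]$, and applying the isomorphism $\iota$ yields $o_\bb = \sum_\bc (P(\bb):M(\bc)) v_\bc = \sum_\bc [M(\bc):L(\bb)]\,v_\bc$.

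Now I would apply $\pr_\bsigma$, which sends $v_\bc$ to $v_{\bc'}^\bsigma$ if $\bc \in \B_\bsigma$ and to $0$ otherwise. Reading off the coefficient of $v_{\ba'}^\bsigma$ on each side of $\pr_\bsigma o_\bb = \pr_\bsigma c_\bb = \pr_0 c_{\bb'}^\bsigma$, and noting that $\ba \in \B_\bsigma$ so this coefficient really is extracted from the $\ba$-term in each expansion, one gets
\[
(P(\bb):M(\ba)) = [M(\ba):L(\bb)] = d_{\ba,\bb}(1) = d_{\ba',\bb'}^\bsigma(1),
\]
which is the desired chain of equalities.

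There is no real obstacle here once Theorem~\ref{thethm} is in hand: the corollary is a mechanical translation, the only subtlety being the bookkeeping needed to ensure that $\ba \in \B_\bsigma$ (so that the $v_\ba$-coefficient genuinely survives under $\pr_\bsigma$) and that the bijection $\B_\bsigma \stackrel{\sim}{\to} \B_0,\ \bc \mapsto \bc'$ correctly identifies the coefficients extracted from the two expansions $\pr_\bsigma c_\bb$ and $\pr_0 c_{\bb'}^\bsigma$. The latter point is exactly Lemma~\ref{ckwp}, so it is already available.
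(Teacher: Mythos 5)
Your proposal is correct and takes essentially the same route as the paper: the first equality is BGG reciprocity in $\O$, and the rest follows from the identity $\pr_\bsigma o_\bb = \pr_\bsigma c_\bb = \pr_0 c_{\bb'}^\bsigma$ of Theorem~\ref{thethm}, with the hypothesis guaranteeing $\ba \in \B_\bsigma$ so that the relevant coefficient survives the projection. The only cosmetic difference is that you read off the coefficient of $v_{\ba'}^\bsigma$ directly at the level of $K_0(\O^\Delta)$, whereas the paper phrases the computation of $[M(\ba):L(\bb)]$ by passing to the Serre quotient $\O_\bsigma$; both amount to the same coefficient comparison.
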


\begin{proof}
The first equality is BGG reciprocity in the highest weight category
$\O$.
Defining $\bsigma$ so that $\ba,\bb \in \B_\bsigma$, we can compute
$[M(\ba):L(\bb)]$
by passing to the quotient category $\O_\bsigma$ and computing the
corresponding composition multiplicity there. Theorem~\ref{thethm} tells us
that that is computed by the polynomials (\ref{pilly})--(\ref{polly})
evaluated at $q=1$.
\end{proof}

In particular,
if all of the strictly positive entries of $\bb \in \B$ appear after the
weakly negative ones, then Corollary~\ref{howto} plus Lemma~\ref{ex1} show
that all composition multiplicities in the Verma
supermodule $M(\bb)$ are determined by computing corresponding coefficients of 
canonical basis elements (either type A or C).
At the other extreme, using Lemma~\ref{drive} instead, if all of the strictly
positive entries of $\bb \in \B$ come before the weakly negative ones,
then the same is true for all of the Verma multiplicities in the projective
$P(\bb)$. We can state this formally in terms of the orthodox basis as follows:

\begin{corollary}\label{plick}
If $\bb \in \B$ has all its strictly positive entries appearing 
  before the weakly negative ones, then
$o_\bb = \pr_\bsigma o_\bb = \pr_\bsigma c_\bb = \pr_0
c^\bsigma_{\bb'}= c_\bb.$
\end{corollary}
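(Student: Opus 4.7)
The plan is to deduce the entire string of equalities from Lemma~\ref{drive}, Theorem~\ref{mainsplitthm2}, and Theorem~\ref{thethm}. The middle three terms $\pr_\bsigma o_\bb$, $\pr_\bsigma c_\bb$, $\pr_0 c_{\bb'}^\bsigma$ agree immediately by Theorem~\ref{thethm} applied to $\bb \in \B_\bsigma$, so the genuine content is the outer identifications: that both $o_\bb$ and $c_\bb$ (viewed in $V^{\otimes n}$) are supported on $\B_\bsigma$, and hence coincide once we know that their common projections under $\pr_\bsigma$ agree.

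First I would fix $\bsigma = (\sigma_1,\dots,\sigma_n)$ with $\sigma_r := +$ if $b_r>0$ and $\sigma_r := -$ if $b_r\leq 0$; under the hypothesis on $\bb$, this $\bsigma$ has the form $(+,\dots,+,-,\dots,-)$. Then Lemma~\ref{drive} supplies the key fact that $\B_\bsigma$ is a coideal in $(\B,\preceq)$, and by construction it contains $\bb$.

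Second I would verify the two support statements. For $c_\bb$, the expansion $(\ref{polly})$ shows $c_\bb \in \sspan\{v_\ba \:|\: \ba \succeq \bb\}$, which is contained in $\sspan\{v_\ba \:|\: \ba \in \B_\bsigma\}$ by the coideal property. For $o_\bb = \iota([P(\bb)])$, BGG reciprocity (as in the proof of Corollary~\ref{bruhat}) gives $o_\bb = \sum_\ba (P(\bb):M(\ba))\, v_\ba$, and Theorem~\ref{mainsplitthm2} forces every $\ba$ appearing with nonzero multiplicity to satisfy $\ba \succeq \bb$, hence $\ba \in \B_\bsigma$.

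Third, since $\pr_\bsigma$ restricts to a linear isomorphism from $\sspan\{v_\ba \:|\: \ba \in \B_\bsigma\}$ onto $V_0^{\otimes\bsigma}$, with inverse $\inc_\bsigma$, the identity $\pr_\bsigma o_\bb = \pr_\bsigma c_\bb$ from Theorem~\ref{thethm} upgrades to $o_\bb = c_\bb$ in $V^{\otimes n}$. The outer equalities in the displayed statement are then to be read as the assertion that $\inc_\bsigma(\pr_\bsigma o_\bb) = o_\bb$ and $\inc_\bsigma(\pr_\bsigma c_\bb) = c_\bb$, both of which are automatic from the support containment just established. I do not anticipate a serious obstacle here: every ingredient has been assembled already, and the only subtlety worth stating carefully is that the four $=$'s in the corollary mix vectors in $V^{\otimes n}$ and $V_0^{\otimes \bsigma}$, interpreted through the canonical identification of these subspaces via $\pr_\bsigma$ and $\inc_\bsigma$.
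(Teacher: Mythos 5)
Your argument is correct and matches the paper's intended derivation: the middle equalities come from Theorem~\ref{thethm}, and the outer ones follow because Lemma~\ref{drive} makes $\B_\bsigma$ a coideal containing $\bb$, so both $c_\bb$ (via (\ref{polly})) and $o_\bb$ (via Theorem~\ref{mainsplitthm2} and BGG reciprocity) are supported on $\B_\bsigma$, where $\pr_\bsigma$ is injective with inverse $\inc_\bsigma$. Your explicit remark that the equalities are read through the identification $\pr_\bsigma/\inc_\bsigma$ is exactly the right way to interpret the statement.
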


This is exactly the situation of \cite[Conjecture 5.13]{CKW}, which 
follows easily from Corollary~\ref{plick} using also the Ringel duality of
\cite[(7.12)]{Btilt}. 

\begin{remark}
The $q$-analog of Theorem~\ref{thethm} is also true: in the setup of
the theorem,
we have that $\pr_\bsigma \dot o_\bb = \pr_\bsigma \dot c_\bb = \pr_0
\dot c_{\bb'}^\bsigma$.
If we had proved the assertions in Remark~\ref{didnt}, this would follow by
repeating the proof of Theorem~\ref{thethm} in the graded setting.
Without this, one needs a slightly more roundabout argument, involving
truncating
to 
$\mathfrak{sl}_k \hookrightarrow \mathfrak{sp}_{2k}$. 
Since we have not introduced notation for this, we omit the detailed
argument.
This implies also the $q$-analog of Corollary~\ref{plick}: we have that
$$
\dot o_\bb = \pr_\bsigma \dot o_\bb = \pr_\bsigma \dot c_\bb = \pr_0
\dot c_{\bb'}^\bsigma = \dot c_\bb
$$
in case all strictly positive entries of $\bb$ precede the weakly
negative ones.
\end{remark}

\subsection{Decomposition of category $\mathcal F$}
In this subsection, we view $\B$ as a poset via the
$\mathfrak{sp}_{2\infty}$-Bruhat order $\preceq$ from (\ref{o700}).
Given a decomposition $n = n_0+n_1$ with $n_0,n_1 \geq 0$, let
\begin{align}
\B_{n_0|n_1}&:= 
\left\{\bb \in \B\:|\:\text{$\bb$ has $n_0$ entries that are
    $> 0$ and $n_1$ entries that are $\leq 0$}\right\},\label{bb1}\\\label{bb2}
\B^\#_{n_0|n_1}&:= 
\left\{\bb \in \B\:|\:b_1,\dots,b_{n_0} > 0,
b_{n_0+1},\dots, b_n \leq 0\right\},\\
\B^+_{n_0|n_1}&:= 
\left\{\bb \in \B\:|\:b_1 > \cdots > b_{n_0} > 0 \geq b_{n_0+1} >
  \cdots > b_n\right\}.\label{bb3}
\end{align}

\begin{lemma}\label{crap}
Let $\bsigma = (+,\dots,+,-,\dots,-)$ with $n_0$ entries $+$ and $n_1$
entries $-$.
Then $\B_{n_0|n_1} =\B_{\leq \bsigma}$ and $\B_{n_0|n_1}^\# =
\B_\bsigma$.
In particular, 
$\B_{n_0|n_1}^\#$ is a coideal in $\B_{n_0|n_1}$.
\end{lemma}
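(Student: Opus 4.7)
The plan is to obtain both identifications essentially for free by unwinding definitions and invoking Lemma~\ref{drive}.

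For the first identification, I would recall from the proof of Lemma~\ref{drive} (specifically equation (\ref{bug})) that
$$
\B_{\leq\bsigma} = \{\ba \in \B : N_{[1,n]}(\ba,0) = n_0 - n_1\}.
$$
Unwinding the definition of $N_{[1,n]}(\ba,0)$ in (\ref{N2}), this condition says that the number of entries of $\ba$ strictly greater than $0$ minus the number $\leq 0$ equals $n_0-n_1$; since these counts sum to $n = n_0+n_1$, the condition is equivalent to having exactly $n_0$ positive entries and $n_1$ nonpositive entries. By (\ref{bb1}), this is precisely $\B_{n_0|n_1}$.

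For the second identification, I would simply substitute the given $\bsigma$ into the definition (\ref{Bsigma}) of $\B_\bsigma$: the conditions $b_r > 0$ for $\sigma_r = +$ and $b_r \leq 0$ for $\sigma_r = -$ translate directly into $b_1,\dots,b_{n_0} > 0$ and $b_{n_0+1},\dots,b_n \leq 0$, which is the definition (\ref{bb2}) of $\B_{n_0|n_1}^\#$.

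Finally, for the ``in particular'' statement, I would note that Lemma~\ref{drive} already gives that $\B_\bsigma$ is a coideal in $\B$, so any $\bc \in \B$ with $\bc \succeq \bb$ for some $\bb \in \B_\bsigma$ lies in $\B_\bsigma$. Restricting attention to $\bc \in \B_{n_0|n_1}$ then yields that $\B_{n_0|n_1}^\# = \B_\bsigma$ is a coideal in $\B_{n_0|n_1}$. There is no real obstacle here; the whole lemma is a bookkeeping exercise designed to match the notation of (\ref{bb1})--(\ref{bb3}) with the notation from Section~2, and the only substantive content has already been absorbed into Lemma~\ref{drive}.
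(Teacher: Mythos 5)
Your proposal is correct and follows essentially the same route as the paper: the paper's own proof likewise deduces $\B_{n_0|n_1}=\B_{\leq\bsigma}$ from (\ref{bug}) and reads off $\B_{n_0|n_1}^{\#}=\B_\bsigma$ directly from (\ref{Bsigma}), with the coideal statement then coming from Lemma~\ref{drive}. You merely spell out the counting step implicit in (\ref{bug}), which is fine.
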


\begin{proof}
The first equality follows from (\ref{bug}), and the second is clear
from (\ref{Bsigma}).
\end{proof}

\begin{lemma}\label{connection}
Any $\bb \in \B_{n_0|n_1}^+$
can be connected to a {\em typical} $\ba \in \B_{n_0|n_1}^+$ by
applying a
sequence of the crystal operators $\tilde e_i, \tilde
f_i\:(i \in I_0)$ from $\S$\ref{scr}.
\end{lemma}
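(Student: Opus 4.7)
The plan is to identify each $\bb \in \B^+_{n_0|n_1}$ with the pair of finite subsets $A := \{b_r : r \leq n_0\}$ and $B := \{1-b_r : r > n_0\}$ of $\Z_{>0}$, so that $\bb$ is typical precisely when $A \cap B = \emptyset$. First I would verify that the operators $\tilde{e}_i, \tilde{f}_i$ with $i \in I_0$ preserve the subset $\B^+_{n_0|n_1}$: since $i, i+1 > 0$ and $-i, 1-i \leq 0$ for $i \geq 1$, each operator modifies one entry of $\bb$ by $\pm 1$ without altering its sign, and the rule that only adjacent $\mathtt{ef}$-pairs cancel in the reduced $i$-signature rules out exactly those shifts that would break the strict monotonicity of the two blocks.

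The main content would then be an explicit construction: I would push the elements of $A$ upward one at a time, in decreasing order, until each lies strictly above $\max(B)$. To push a top-of-$A$ element $a$ upward by one, apply $\tilde{f}_a$: a short case analysis of the reduced signature $(E_1, F_1, E_2, F_2)$ with $E_1 := [a+1 \in A]$, $F_1 := [a \in A]$, $E_2 := [a \in B]$, $F_2 := [a+1 \in B]$ shows that this achieves $A: a \to a+1$ in cases $(0,1,0,0)$, $(0,1,1,0)$ and $(0,1,1,1)$, whereas in the obstructed case $(0,1,0,1)$ it instead acts on $B$. In that obstructed case I would first insert a preparatory chain $\tilde{e}_{a+m}\cdots \tilde{e}_{a+2}\,\tilde{e}_{a+1}$ to shift the maximal run $\{a+1, a+2, \ldots, a+m\} \subseteq B$ upward by one; by the analogous signature analysis this works in the clean cases $(0,0,1,0)$, and it terminates because $B$ is finite.

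The main subtlety will be to check that the whole procedure runs to completion without introducing new atypical pairs that would spoil termination. The key point is that pushing the largest $A$-element first creates a buffer zone above it: once it has been moved past $\max(B)$, the remaining smaller elements of $A$ lie strictly below the region in which all subsequent preprocessing occurs, so every $\tilde{e}$-preprocessing can be arranged in the clean case $(0,0,1,0)$ where $j, j+1 \notin A$, and hence leaves $A$ untouched. Once all $n_0$ elements of $A$ have been shifted above $\max(B)$, the resulting tuple $\ba \in \B^+_{n_0|n_1}$ satisfies $A' \cap B' = \emptyset$ and has been reached from $\bb$ by a finite sequence of operators $\tilde{e}_i, \tilde{f}_i$ with $i \in I_0$, as required.
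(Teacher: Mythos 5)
Your translation to the pair of sets $A,B\subseteq\Z_{>0}$ and your local signature analysis in the cases with $E_1=0$ are correct, but the global procedure has a genuine gap: the stated goal (every element of $A$ strictly above $\max(B)$) is stronger than typicality and is not attainable by your moves, and the case analysis omits $E_1=1$, which does occur under your own scheme. Concretely, take $\bb=(2,1,0,-1)\in\B^+_{2|2}$, so $A=B=\{1,2\}$. Pushing the largest $A$-element is case $(0,1,1,0)$ and gives $\tilde f_2\bb=(3,1,0,-1)$, which already lies above $\max(B)=2$, so you pass to the next element $1$: case $(0,1,1,1)$ gives $\tilde f_1(3,1,0,-1)=(3,2,0,-1)$, which is still atypical ($b_2+b_4=1$). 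But now the signature data at $a=2$ is $(1,1,1,0)$ --- not in your table --- and in fact $\tilde f_2(3,2,0,-1)=\varnothing$, because the $\mathtt{e}$ coming from $3\in A$ cancels the $\mathtt{f}$ coming from $2\in A$; the $(0,1,0,1)$ preprocessing is also inapplicable since $3\notin B$. So the element you parked just above $\max(B)$ blocks all further pushes, contradicting the ``buffer zone'' claim; note also that when $j+1\in A$ and $j\in B$ the leftmost $\mathtt{e}$ for $\tilde e_j$ is the one contributed by $A$, so a preprocessing step can even drag a previously pushed $A$-element back down.

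There is a second, independent problem: in the obstructed case $(0,1,0,1)$ your preparatory chain raises the run of $B$ above the current element by one for every single step the $A$-element gains, so the configuration merely translates upward and the stopping condition ``above $\max(B)$'' is never met; e.g.\ starting from $(1,0,-2)$ one push reaches the typical tuple $(2,0,-2)$, but your rule does not stop there and then cycles $(2,0,-2)\mapsto(2,0,-3)\mapsto(3,0,-3)\mapsto\cdots$ indefinitely. What is missing is a well-founded measure tied to the correct goal $A\cap B=\varnothing$. This is exactly how the paper argues: for the minimal atypical level $i$ it chooses the least $j\geq i$ such that level $j+1$ is free in both $A$ and $B$ (i.e.\ $\{j+1,-j\}$ is disjoint from the entries), moves whichever of $A$ or $B$ occupies level $j$ up into that free slot (by $\tilde f_j$, resp.\ $\tilde e_j$), and performs a double induction on atypicality and on $j-i$. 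To repair your argument you would need an analogous strictly decreasing measure together with a complete signature analysis covering $E_1=1$.
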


\begin{proof}
We proceed by induction on the atypicality of $\bb \in
\B_{n_0|n_1}^+$,
i.e. the number of pairs $1 \leq r < s \leq n$ such that $b_r+b_s=1$.
If $\bb$ is typical, the result is trivial. So suppose that $\bb$ is
not typical. Let $r$ be minimal such that $b_r + b_s = 1$ for some $s
> r$. Set $i := b_r$, so that $b_s = 1-i$. Since $b_r > b_s$, we have
that $i > 0$. 
Then let $j \geq i$ be minimal such that
$\{j+1,-j\}\cap \{b_1,\dots,b_n\} = \varnothing$.

Now we make a second induction on $j-i$.
If $j = i$, then we let $\bc \in \B_{n_0|n_1}^+$ be obtained from
$\bb$ by replacing its
entry $i$ with $i+1$. Then $\bc$ is of smaller
atypicality than $\bb$.
Also $\bc = \tilde f_i \bb$, and we get
done by applying the first induction hypothesis to $\bc$.
If $j > i$, we either have that $j \in \{b_1,\dots,b_n\}$ or $1-j \in
\{b_1,\dots,b_n\}$, but not both (by the minimality of $r$).
In the former case, let $\bc \in \B_{n_0|n_1}^+$ be obtained from $\bb$ by replacing its 
entry $j$ with $j+1$; then, $\bc = \tilde f_j \bb$.
In the latter case, let $\bc \in \B_{n_0|n_1}^+$ be obtained from $\bb$ by replacing its
entry $1-j$ with $-j$; then, $\bc = \tilde e_j \bb$.
Either way, $\bc$ has the same atypicality as $\bb$, but the analog of
the statistic $j-i$ for $\bc$ is one less than it was for $\bb$.
It remains to apply the second induction hypothesis to $\bc$ to
finish the proof.
\end{proof}

Let
$\O_{n_0|n_1}$ be the Serre
subcategory of $\O$ generated by $\left\{L(\bb)\:\big|\:\bb \in
\B_{n_0|n_1}\right\}$.
One can determine whether $\bb \in \B$ belongs to $\B_{n_0|n_1}$ just
from knowledge of $|\WT(\bb)|$ (it does so if and only if $\sum_{i \in
  I} (|\WT(\bb)|,\eps_i) = n_0-n_1$). So Corollary~\ref{split}
implies that
$\O_{n_0|n_1}$ is a sum of blocks of
$\O$. Hence:
\begin{equation}\label{De}
\O = \bigoplus_{n_0+n_1=n} \O_{n_0|n_1}.
\end{equation}
Let $\mathcal F$ be the full subcategory of $\mathcal O$ consisting of
all finite-dimensional supermodules. 
Setting $\F_{n_0|n_1} := \F \cap \O_{n_0|n_1}$,
the decomposition (\ref{De}) induces a decomposition
\begin{equation}\label{haha}
\F = \bigoplus_{n_0+n_1=n} \F_{n_0|n_1}.
\end{equation}
By \cite[Theorem 4]{Pen},
the supermodule 
$L(\bb)$ is finite-dimensional if and only if
$\bb$ is {\em strictly dominant} in the sense that $b_1 > \cdots >
b_n$.
Consequently, $\mathcal F_{n_0|n_1}$ is the
Serre subcategory of $\mathcal O$ generated by 
$\big\{L(\bb)\:\big|\:\bb \in \B_{n_0|n_1}^+\big\}$.

The categorical $\mathfrak{sp}_{2\infty}$-action on $\O$ leaves the
subcategory $\F$ invariant; this follows
because the special projective
functors from (\ref{spc}) send finite-dimensional supermodules to
finite-dimensional supermodules. 
From this, we get induced categorical $\mathfrak{sl}_{+\infty}$-actions on
$\F_{n_0|n_1}\hookrightarrow\O_{n_0|n_1}$ for each $n_0+n_1=n$.
Recalling Lemma~\ref{crap}, let $\overline\O_{n_0|n_1}$ be the quotient of $\O_{n_0|n_1}$ by the
Serre subcategory generated by $\big\{L(\bb)\:\big|\:\bb \in \B_{n_0|n_1}
\setminus \B_{n_0|n_1}^\#\big\}$.
Writing $\overline{L}(\bb)$ for the canonical image of $L(\bb)$ in
$\overline\O_{n_0|n_1}$,
the irreducible objects of $\overline{\O}_{n_0|n_1}$ are represented by
$\big\{\overline{L}(\bb)\:\big|\:\bb \in \B_{n_0|n_1}^\#\big\}$.

\begin{lemma}\label{beans}
$\overline{\O}_{n_0|n_1}$ is a TPC of the $\mathfrak{sl}_{+\infty}$-module
$(V^+_0)^{\otimes n_0} \otimes (V_0^-)^{\otimes
  n_1}$. 
\end{lemma}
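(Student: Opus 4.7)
The plan is to recognize $\overline{\O}_{n_0|n_1}$ as a specific instance of the general construction $\O_\bsigma = \O_{\leq\bsigma}/\O_{<\bsigma}$ from Theorem~\ref{jordan}, and then invoke that theorem directly. The key is to choose the right sign sequence.

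First I would set $\bsigma := (+,\dots,+,-,\dots,-) \in \{\pm\}^n$ with $n_0$ plus signs followed by $n_1$ minus signs. With this choice, Lemma~\ref{crap} identifies $\B_{\leq\bsigma} = \B_{n_0|n_1}$ and $\B_\bsigma = \B_{n_0|n_1}^\#$; since $\B_{<\bsigma} = \B_{\leq\bsigma}\setminus\B_\bsigma$ by definition, this also gives $\B_{<\bsigma} = \B_{n_0|n_1}\setminus\B_{n_0|n_1}^\#$. Moreover, unfolding the definition of $V_0^{\otimes\bsigma}$ from $\S\ref{tp}$, we see that
$$V_0^{\otimes\bsigma} = V_0^{\sigma_1}\otimes\cdots\otimes V_0^{\sigma_n} = (V_0^+)^{\otimes n_0}\otimes (V_0^-)^{\otimes n_1},$$
which is exactly the module targeted by the lemma.

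Next I would identify the Serre subcategory/quotient setup. The category $\O_{n_0|n_1}$ was defined as the Serre subcategory of $\O$ generated by $\{L(\bb)\mid\bb\in\B_{n_0|n_1}\}$ and was already shown (just before (\ref{haha})) to be a sum of blocks of $\O$; this coincides with $\O_{\leq\bsigma}$. Similarly, the Serre subcategory of $\O_{n_0|n_1}$ generated by $\{L(\bb)\mid\bb\in\B_{n_0|n_1}\setminus\B_{n_0|n_1}^\#\}$, whose quotient defines $\overline{\O}_{n_0|n_1}$, is exactly $\O_{<\bsigma}$. Hence $\overline{\O}_{n_0|n_1} = \O_{\leq\bsigma}/\O_{<\bsigma} = \O_\bsigma$.

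Finally, since $\O$ is itself a TPC of $V^{\otimes n}$ by Theorem~\ref{typec}, Theorem~\ref{jordan} applies and endows $\O_\bsigma$ with the structure of a TPC of $V_0^{\otimes\bsigma} = (V_0^+)^{\otimes n_0}\otimes(V_0^-)^{\otimes n_1}$. Transporting this structure through the identification $\O_\bsigma = \overline{\O}_{n_0|n_1}$ finishes the proof. There is no real obstacle here: once the correct $\bsigma$ is chosen so that the ``$+$'s come first,'' Lemma~\ref{crap} does all the combinatorial bookkeeping and Theorem~\ref{jordan} supplies the categorical structure. The only mild subtlety worth remarking on is why $\O_{n_0|n_1}$ is literally equal to $\O_{\leq\bsigma}$ rather than merely containing it; this is because both are the Serre subcategory of $\O$ generated by the same set of irreducibles, and being a sum of blocks is automatic from the weight-space criterion preceding (\ref{De}).
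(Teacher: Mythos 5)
Your proposal is correct and follows exactly the paper's own argument: choose $\bsigma=(+,\dots,+,-,\dots,-)$, use Lemma~\ref{crap} to identify $\overline{\O}_{n_0|n_1}$ with the quotient $\O_\bsigma$, and apply Theorem~\ref{jordan}. The extra bookkeeping you supply (matching $\O_{n_0|n_1}$ with $\O_{\leq\bsigma}$ and the defining Serre subcategory with $\O_{<\bsigma}$) is exactly what the paper leaves implicit.
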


\begin{proof}
This follows from Lemma~\ref{crap} and Theorem~\ref{jordan},
since
$\overline{\O}_{n_0|n_1}$ is the same as the
quotient category $\O_{\bsigma}$ for $\bsigma$ as in that lemma.
\end{proof}

Let $\overline\F_{n_0|n_1}$ be the Serre subcategory of
$\overline\O_{n_0|n_1}$ generated
by $\big\{\overline{L}(\bb)\:\big|\:\bb \in \B^+_{n_0|n_1}\big\}$.
We are going to consider the following commutative diagram of functors:
\begin{equation}\label{roger}
\begin{CD}
\F_{n_0|n_1}&@>>>&\O_{n_0|n_1}\phantom{.}\\
@VQ VV&&@VVV\\
\overline{\F}_{n_0|n_1}&@>>>&\overline{\O}_{n_0|n_1}.
\end{CD}
\end{equation}
Here, the horizontal functors are the canonical inclusions, the
right hand functor is the quotient functor, and the commutativity of
the diagram then determines
the left hand functor $Q$ uniquely.
The categorical $\mathfrak{sl}_{+\infty}$-action on $\O_{n_0|n_1}$
induces an action on the quotient category
$\overline{\O}_{n_0|n_1}$. Then this restricts also to an action on
$\overline{\F}_{n_0|n_1}$.

\begin{lemma}\label{q}
The functor $Q:\mathcal F_{n_0|n_1} \rightarrow \overline{\mathcal
  F}_{n_0|n_1}$ is a strongly equivariant equivalence of
$\mathfrak{sl}_{+\infty}$-categorifications.
\end{lemma}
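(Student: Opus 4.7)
The plan is to verify separately that $Q$ is strongly equivariant and that it is an equivalence of underlying abelian categories. Strong equivariance is essentially automatic: the categorical $\mathfrak{sl}_{+\infty}$-actions on $\F_{n_0|n_1}$ and $\overline{\F}_{n_0|n_1}$ are inherited by restriction from $\O_{n_0|n_1}$ and $\overline{\O}_{n_0|n_1}$ respectively, while the right vertical quotient functor in (\ref{roger}) is strongly equivariant by the construction of the action on $\overline{\O}_{n_0|n_1}$ via descent in Theorem~\ref{jordan}. Composing with the horizontal Serre inclusions, these compatibilities deliver the strong equivariance of $Q$.

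The key structural observation is that, by Lemma~\ref{crap}, the set $\B^\#_{n_0|n_1}$ is a coideal in $\B_{n_0|n_1}$, so its complement $\B_{n_0|n_1}\setminus\B^\#_{n_0|n_1}$ is an ideal in the weight poset of the highest weight category $\O_{n_0|n_1}$. Thus $\overline{\O}_{n_0|n_1}$ is obtained from $\O_{n_0|n_1}$ by quotienting out the Serre subcategory $\ker Q_0$ generated by the simples labelled by this ideal, which is a standard highest-weight quotient: $Q_0$ admits an exact right adjoint (section) functor, sends projectives to projectives via $Q_0(P(\bc)) \cong \overline{P}(\bc)$ for $\bc\in\B^\#_{n_0|n_1}$, and consequently induces isomorphisms $\Ext^i_{\O_{n_0|n_1}}(L(\bb),L(\bc)) \stackrel{\sim}{\rightarrow} \Ext^i_{\overline{\O}_{n_0|n_1}}(\overline{L}(\bb),\overline{L}(\bc))$ for all $\bb,\bc\in \B^\#_{n_0|n_1}$ and $i\geq 0$.

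Armed with this, full faithfulness of $Q$ is immediate: the $i=0$ isomorphism (plus the disjointness $\B^+_{n_0|n_1}\cap(\B_{n_0|n_1}\setminus \B^\#_{n_0|n_1}) = \emptyset$) shows $\Hom_{\overline{\O}_{n_0|n_1}}(QM,QN) = \Hom_{\O_{n_0|n_1}}(M,N)$ for all $M,N\in\F_{n_0|n_1}$. For essential surjectivity, I proceed by induction on the length of $\overline{M}\in\overline{\F}_{n_0|n_1}$. The length-one case is immediate, since $Q(L(\bb))=\overline{L}(\bb)$ for $\bb\in\B^+_{n_0|n_1}$. For the inductive step, choose a SES $0\to \overline{M}'\to \overline{M}\to \overline{L}(\bb)\to 0$; by induction $\overline{M}'\cong Q(M')$ for some $M'\in\F_{n_0|n_1}$, and the extension class lifts via the $i=1$ isomorphism to an element of $\Ext^1_{\O_{n_0|n_1}}(L(\bb), M')$, yielding the required $M\in\F_{n_0|n_1}$ with $Q(M)\cong \overline{M}$.

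The main obstacle is rigorously establishing the $\Ext^\bullet$-preservation under the highest-weight quotient $Q_0$; this reduces to showing that $Q_0$ sends the projective cover $P(\bc)$ for $\bc\in\B^\#_{n_0|n_1}$ to the projective cover $\overline{P}(\bc)$ in $\overline{\O}_{n_0|n_1}$. This in turn uses the $\Delta$-filtration of $P(\bc)$ from the highest weight structure on $\O_{n_0|n_1}$: its sections $M(\bd)$ for $\bd\succeq \bc$ all remain labelled in the coideal $\B^\#_{n_0|n_1}$ by Lemma~\ref{crap}, so $Q_0(P(\bc))$ has a $\overline{\Delta}$-filtration and is projective in $\overline{\O}_{n_0|n_1}$ by the usual BGG-reciprocity argument combined with the existence of the exact section functor for the ideal-quotient.
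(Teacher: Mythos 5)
Your strong equivariance and full faithfulness arguments are fine: the Hom-isomorphism for objects with no composition factors in the kernel of the quotient is exactly what the paper gets from the fact that $Q$ kills no irreducibles (citing \cite[Lemma 2.13]{BD1}). The gap is the claimed $\operatorname{Ext}$-isomorphisms, which is where all the work of essential surjectivity is hiding. It is true that the quotient functor sends $P(\bc)$ to the projective cover of $\overline{L}(\bc)$ in $\overline{\O}_{n_0|n_1}$ for $\bc\in\B^\#_{n_0|n_1}$, but this does not yield $\operatorname{Ext}^i_{\O_{n_0|n_1}}(L(\bb),L(\bc))\cong\operatorname{Ext}^i_{\overline{\O}_{n_0|n_1}}(\overline{L}(\bb),\overline{L}(\bc))$: from the second step on, a minimal projective resolution of $L(\bb)$ involves projectives $P(\bc)$ with $\bc$ in the complementary ideal, and for those the compatibility $\Hom(P(\bc),N)\cong\Hom(Q_0P(\bc),Q_0N)$ fails (they have composition factors in the kernel), so applying $Q_0$ to the resolution does not compute $\operatorname{Ext}$ in the quotient. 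The claim is in fact false in this generality: for the principal block of category $\O$ for $\mathfrak{sl}_2$ and the ideal consisting of the antidominant weight, $\operatorname{Ext}^2(L(\lambda),L(\lambda))\neq 0$ in the block while the quotient is semisimple; and one can build quasi-hereditary algebras with a uniserial projective with layers $L(3),L(1),L(2)$ and ideal $\{1\}$ in which even $\operatorname{Ext}^1$ between the two coideal-labelled simples changes after truncation. What your induction really needs is surjectivity of $\operatorname{Ext}^1_{\O_{n_0|n_1}}(L(\bb),M')\to\operatorname{Ext}^1_{\overline{\O}_{n_0|n_1}}(\overline{L}(\bb),QM')$ (injectivity is easy), and this ``1-faithfulness'' does not follow formally from preservation of the projectives indexed by the coideal; it is left unproven, so the inductive lifting step has no support.

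For comparison, the paper proves density by a different mechanism that avoids any $\operatorname{Ext}^1$-lifting: $\overline{\F}_{n_0|n_1}$ is Schurian, so it has projective covers $\overline{P}(\bb)$ (projective in $\overline{\F}_{n_0|n_1}$, not in $\overline{\O}_{n_0|n_1}$), and it suffices to realize each $\overline{P}(\bb)$ as a summand of an object in the image, after which arbitrary objects are handled by a two-step projective resolution and the Five Lemma. For typical $\ba\in\B^+_{n_0|n_1}$ one shows directly from projectivity of $M(\ba)$ that $\overline{P}(\ba)=\overline{L}(\ba)$, and a general $\bb$ is linked to such an $\ba$ by the crystal operators (Lemma~\ref{connection}); then Corollary~\ref{thegraph} and biadjointness give a word $X$ in the $E_i,F_i$ with $\overline{P}(\bb)$ a summand of $X\overline{L}(\ba)\cong Q(XL(\ba))$. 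If you want to keep your route, you would have to prove the needed $\operatorname{Ext}^1$-surjectivity by hand in this specific situation; as written, that is the missing idea.
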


\begin{proof}
It is immediate from  the construction that $Q$ is strongly
equivariant. Also, since $\B^+_{n_0|n_1} \subseteq \B^\#_{n_0|n_1}$,
the images under $Q$ of all of the irreducible objects of $\mathcal
F_{n_0|n_1}$ are non-zero.
This is enough to show that $Q$ is fully faithful; cf. \cite[Lemma
2.13]{BD1}. It just remains to show that $Q$ is dense. 

As it is a Serre subcategory of the Schurian
category $\overline\O_{n_0|n_1}$,
the category $\overline\F_{n_0|n_1}$ is itself Schurian; in
particular, it has enough projectives.
For $\bb \in \B_{n_0|n_1}^+$, 
let $\overline{P}(\bb)$ be the projective cover of $\overline{L}(\bb)$
in $\overline\F_{n_0|n_1}$.
It suffices to show that each $\overline{P}(\bb)$ is a summand of
something in the essential
image of $Q$. Then, to get all other objects of
$\overline\F_{n_0|n_1}$, one can argue by considering a two-step projective
resolution, using the exactness of $Q$ and the Five Lemma.

Suppose in this paragraph that $\ba \in \B_{n_0|n_1}^+$ is typical.
Then the Verma supermodule $M(\ba)$ is projective in $\O_{n_0|n_1}$.
Hence, the projective object $\overline{P}(\ba)$ may be realized as the largest quotient of
the canonical image of $M(\ba)$ in $\overline{\O}_{n_0|n_1}$ which
belongs to $\overline{\F}_{n_0|n_1}$.
Typicality also implies that there are no strictly dominant $\bb \in \B$
with $\bb \prec \ba$. We deduce that this largest quotient is
$\overline{L}(\ba)$.
This shows that
$\overline{P}(\ba) = \overline{L}(\ba)$.

Now take any $\bb \in \B_{n_0|n_1}^+$.
Applying Lemma~\ref{connection}, we can find a typical $\ba \in \B_{n_0|n_1}^+$ 
connected to $\bb$ by a sequence of the crystal operators $\tilde e_i,
\tilde f_i\:(i \in I_0)$. In view of Corollary~\ref{thegraph}, it follows that
there is a sequence $X$ of the functors $E_i, F_i\:(i \in I_0)$ such
that $L(\bb)$ appears in the head of $X L(\ba)$.
Passing to the quotient category,
this shows that
$$
\Hom_{\overline{\F}_{n_0|n_1}}(X \overline{L}(\ba), \overline{L}(\bb)) \neq 0.
$$
By the previous paragraph, $\overline{L}(\ba)$ is projective in
$\overline{\F}_{n_0|n_1}$. Since $X$ has a biadjoint, it sends
projectives to projectives.
This means that
$X \overline{L}(\ba)$ is projective in $\overline{\F}_{n_0|n_1}$ too. We deduce that
$\overline{P}(\bb)$ is a summand of $X \overline{L}(\ba)$. 
Since $Q L(\ba) = \overline{L}(\ba)$ and $Q$ is strongly
equivariant, we have that
$Q (X L(\ba)) \cong X \overline{L}(\ba)$. Thus,
$\overline{P}(\bb)$ is a summand of something in the essential image of $Q$.
\end{proof}

\subsection{Realization of $\F_{n_0|n_1}$ via $\mathfrak{gl}_{n_0|n_1}(\CC)$}\label{damp}
Through the subsection, we fix $n_0,n_1 \geq 0$ with $n_0+n_1=n$.
The goal is to show that $\mathcal{F}_{n_0|n_1}$ is a highest
weight category. To do this, we are going to give
a different realization of the categories
$\overline{\mathcal{F}}_{n_0|n_1} \hookrightarrow
\overline{\mathcal{O}}_{n_0|n_1}$, 
then appeal to Lemma~\ref{q}.
We'll view $\B$ as a poset using the
$\mathfrak{sl}_\infty$-Bruhat order $\preceq_\bsigma$ from (\ref{o600}),
taking $\bsigma := (+,\dots,+,-,\dots,-)$ with $n_0$ entries $+$ and
$n_1$ entries $-$.
Recall also the subset $\B_0$ of $\B$ from (\ref{B0}).
Let
\begin{align}\label{train}
\B^{n_0|n_1} &:= \{\bb \in \B\:|\:b_1 > \cdots > b_{n_0}, b_{n_0+1} < \cdots
< b_{n}\},&
\B_0^{n_0|n_1} &:= \B^{n_0|n_1} \cap \B_0. 
\end{align}
Recalling the posets (\ref{bb2})--(\ref{bb3}) from the previous subsection,
the map $\bb \mapsto \bb'$ from (\ref{primemap}) defines poset isomorphisms
$\B^\#_{n_0|n_1}\stackrel{\sim}{\rightarrow}
\B_0$ and $\B_{n_0|n_1}^+ \stackrel{\sim}{\rightarrow} \B_0^{n_0|n_1}$.

\begin{lemma}\label{rafa}
The subsets $\B_0$ and $\B_0^{n_0|n_1}$ are coideals in $\B$ and
$\B^{n_0|n_1}$, respectively.
\end{lemma}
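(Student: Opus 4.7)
The second assertion is actually an immediate consequence of the first: since $\B_0^{n_0|n_1} = \B^{n_0|n_1} \cap \B_0$ and the two posets carry the same order $\preceq_\bsigma$, any $\bb \in \B^{n_0|n_1}$ with $\bb \succeq_\bsigma \ba \in \B_0^{n_0|n_1}$ lies in $\B_0$ by the first claim and hence in $\B_0^{n_0|n_1}$. So the plan is to reduce everything to showing that $\B_0$ is a coideal in $\B$.

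To do this, I would apply Lemma~\ref{slequiv} with the single choice $i = 0$ (which is allowed since we use the $\mathfrak{sl}_{\infty}$-Bruhat order, whose index set is $\Z$). For $\ba \in \B_0$, every entry $a_r$ is strictly positive, so $[a_r > 0] = 1$ and consequently
\[
N_{[1,s]}^\bsigma(\ba, 0) = \sigma_1 + \cdots + \sigma_s
\]
for all $s$. Writing $\chi_r := [b_r > 0] - 1 \leq 0$ for each $r$, the inequalities and the equality at $s = n$ coming from $\ba \preceq_\bsigma \bb$ specialize to
\[
\sum_{r \leq s} \sigma_r \chi_r \,\geq\, 0 \quad (s < n), \qquad \sum_{r \leq n} \sigma_r \chi_r \,=\, 0.
\]

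With $\bsigma = (+,\dots,+,-,\dots,-)$, the first $n_0$ coefficients $\sigma_r \chi_r$ equal $\chi_r \leq 0$, while the remaining ones equal $-\chi_r \geq 0$. Taking $s = r$ for $r = 1, \dots, n_0$ and arguing inductively, the inequality $\chi_1 + \cdots + \chi_r \geq 0$ combined with $\chi_t \leq 0$ forces $\chi_1 = \cdots = \chi_{n_0} = 0$, i.e.\ $b_1,\dots,b_{n_0} > 0$. Substituting this into the equality at $s = n$ then gives $\sum_{r > n_0}(-\chi_r) = 0$, and since each $-\chi_r \geq 0$ we conclude $\chi_r = 0$ for all $r > n_0$ as well. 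Hence every $b_r$ is positive, so $\bb \in \B_0$.

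The argument is entirely bookkeeping once Lemma~\ref{slequiv} is in place; there is no real obstacle, only the need to keep track of the sign pattern dictated by $\bsigma$. The crucial point is that the particular choice of $\bsigma$ makes all the $\sigma_r \chi_r$'s of one sign on $\{1,\dots,n_0\}$ and the opposite sign on $\{n_0+1,\dots,n\}$, which is exactly what is needed to conclude from the inequalities-plus-equality that each individual $\chi_r$ vanishes.
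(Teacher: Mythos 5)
Your proof is correct and follows essentially the same route as the paper: both apply Lemma~\ref{slequiv} with $i=0$ for $\bsigma=(+,\dots,+,-,\dots,-)$ and exploit that membership in $\B_0$ is detected by the statistics $N^\bsigma_{[1,s]}(\,\cdot\,,0)$, which only increase (with equality at $s=n$) as one moves up in $\preceq_\bsigma$; the paper simply packages this as the identity $\B_0=\{\bb\:|\:N^\bsigma_{[1,n_0]}(\bb,0)\geq n_0,\ N^\bsigma_{[1,n]}(\bb,0)=n_0-n_1\}$ rather than running through all partial sums $s=1,\dots,n_0$. The reduction of the second claim to the first via $\B_0^{n_0|n_1}=\B^{n_0|n_1}\cap\B_0$ is also how the paper implicitly handles it.
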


\begin{proof}
This follows from Lemma~\ref{slequiv}, on noting that
$$
\B_0 = \left\{\bb \in \B\:\Big|\:N_{[1,n_0]}^{\bsigma}(\bb,0)
\geq n_0, 
N_{[1,n]}^\bsigma(\bb,0) = n_0-n_1\right\},
$$
where $\bsigma = (+,\dots,+,-,\dots,-)$ as usual.
\end{proof}

Now we consider the general linear Lie superalgebra
$\mathfrak{g}':=
\mathfrak{gl}_{n_0|n_1}(\CC)$.
Let $\mathfrak{h}'$ and $\mathfrak{b}'$ be the Cartan subalgebra and
Borel subalgebra of $\mathfrak{g}'$ consisting of diagonal and upper
triangular matrices, respectively.
Let $\delta_1',\dots,\delta_{n}'$ be the basis for
$(\mathfrak{h}')^*$
dual to the diagonal matrix units in $\mathfrak{h}'$. Then define
$\O_{n_0|n_1}'$ to be the category of all $\mathfrak{g}'$-supermodules $M$ such
that
\begin{itemize}
\item $M$ is finitely generated over $\g'$;
\item $M$ is locally finite-dimensional over $\b'$;
\item $M$ is semisimple over $\h'$ with all weights of the form
$\lambda'_\bb$ for $\b \in \B$, where
$$
\lambda'_\bb := \sum_{r=1}^{n}
\lambda'_{\bb,r} \delta_r'
\qquad\text{where}\qquad
\lambda'_{\bb,r} = \left\{\begin{array}{ll}
b_r + r-1&\text{if $1 \leq r \leq n_0$,}\\
-b_r+r-2n_0&\text{if $n_0+1 \leq r \leq n$;}\\
\end{array}\right.
$$
\item
for $\bb \in \B$,
the $\Z/2$-grading on the $\lambda_\bb'$-weight space of $M$ 
is concentrated in parity $\sum_{r=n_0+1}^{n} \lambda_{\bb,r}'\pmod{2}$.
\end{itemize}
Note that $\O_{n_0|n_1}'$ is exactly the same as the Abelian category $\mathscr O$
defined in \cite[Lemma 2.2]{Bo}. 
It is a special case of the category
constructed in \cite[Definition 3.7]{BLW}, taking the type
$(\underline{n},\underline{c})$ there
to be $((1^n), (0^{n_0}, 1^{n_1}))$.
In particular, \cite[Theorem 3.10]{BLW} verifies the following:

\begin{lemma}\label{venus}
The category $\O_{n_0|n_1}'$ admits additional structure making it into a TPC
of the 
$\mathfrak{sl}_\infty$-module $(V^+)^{\otimes n_0}
\otimes (V^-)^{\otimes n_1}$.
\end{lemma}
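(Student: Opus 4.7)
The plan is to prove this lemma by mimicking the construction carried out in Section 3 for the supercategory $\sO$ of $\q_n(\K)$, but now for the Lie superalgebra $\g' = \mathfrak{gl}_{n_0|n_1}(\CC)$, so that the relevant Kac--Moody algebra is $\mathfrak{sl}_\infty$ rather than $\mathfrak{sp}_{2\infty}$. The highest weight structure is well known: the Verma modules $M'(\bb) := U(\g')\otimes_{U(\b')}\K_{\lambda'_\bb}$ (with an appropriate $\Z/2$-grading on the one-dimensional $\b'$-module, dictated by the parity condition in the definition of $\O'_{n_0|n_1}$) are the standard objects, indexed by $\bb \in \B$, and one orders them using the $\mathfrak{sl}_\infty$-Bruhat order $\preceq_\bsigma$ with $\bsigma=(+,\dots,+,-,\dots,-)$. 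The conventions chosen in the definition of $\lambda'_\bb$ (in particular the shift by $r-1$ for $r \leq n_0$ versus $-b_r+r-2n_0$ for $r > n_0$) are designed precisely so that the translation combinatorics below matches the signed tensor product $(V^+)^{\otimes n_0}\otimes(V^-)^{\otimes n_1}$.

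Next, introduce the special projective functors $F := U' \otimes -$ and $E := (U')^* \otimes -$, where $U' = \CC^{n_0|n_1}$ is the natural $\g'$-module. These are exact biadjoint endofunctors of $\O'_{n_0|n_1}$. As in Theorem~\ref{hkst}, left multiplication by the Casimir tensor $\omega' = \sum_{r,s}(-1)^{|x_{s,r}|}x_{r,s}\otimes x_{s,r} \in \g' \otimes \g'$ defines an even natural transformation $x:F \Rightarrow F$, and the graded flip gives $t:F^2 \Rightarrow F^2$; these satisfy the relations of the degenerate affine Hecke algebra on $F^d$. The key combinatorial step is the type A analog of Lemma~\ref{eigenvalues}: for each $\bb \in \B$, the endomorphism $x_{M'(\bb)}$ of $F\,M'(\bb)$ is diagonalizable with one-dimensional generalized eigenspaces, whose eigenvalues are the integers $\lambda'_{\bb,r}$ (equivalently, up to the universal shift, the entries $b_r$ for $r \leq n_0$ and $-b_r$ for $r > n_0$). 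This tells us the eigenspace decomposition $F = \bigoplus_{i\in\Z} F_i$, $E = \bigoplus_{i\in\Z}E_i$, where the subfunctors $F_i,E_i$ act on Verma modules with exactly the filtrations prescribed by the $\mathfrak{sl}_\infty$-Chevalley formula (\ref{haha}) for $(V^+)^{\otimes n_0}\otimes(V^-)^{\otimes n_1}$, using the $\bsigma$-signature (\ref{sigdef}).

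The final step is to upgrade the degenerate affine Hecke action to a quiver Hecke action, i.e.\ to produce the strict monoidal functor $\Phi:\QH \to \mathcal End(\O'_{n_0|n_1})$ of Definition~\ref{tpcdef} with $\Phi(i) = F_i$. For this one completes at the eigenvalues $i \in \Z$ just as in (\ref{completed})--(\ref{tea}), and then invokes the isomorphism of Brundan--Kleshchev--Rouquier between the completed degenerate affine Hecke algebra and the quiver Hecke algebra of type $\mathfrak{sl}_\infty$ to transport the action. Local nilpotency of $\Phi\bigl(\mathord{\begin{tikzpicture}[baseline=-2]\draw[-,thick,darkred](0.08,-.15) to (0.08,.3); \node at (0.08,0.05) {$\color{darkred}\bullet$};\node at (0.08,-.25){$\scriptstyle i$};\end{tikzpicture}}\bigr)$ is automatic from the definition of $F_i$ as a generalized $i$-eigenspace.

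The main obstacle is bookkeeping rather than anything conceptually new: one has to check that the signs inherited from the $\Z/2$-grading on $\g'$ combine with the shifts built into $\lambda'_\bb$ to produce precisely the Chevalley formula of the \emph{mixed} tensor product $(V^+)^{\otimes n_0}\otimes(V^-)^{\otimes n_1}$ (and not, say, that of $(V^+)^{\otimes n}$ with shifted indices). Once that matching is verified, every other axiom of Definition~\ref{tpcdef} follows formally, and the cited result \cite[Theorem~3.10]{BLW} is exactly the output of this procedure in the general parabolic setting, specialized to the type $(\underline n, \underline c) = ((1^n),(0^{n_0},1^{n_1}))$.
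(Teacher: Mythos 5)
Your proposal is correct and ends up on the same route as the paper: the paper's entire proof is the observation that $\O_{n_0|n_1}'$ coincides with the category of \cite[Definition 3.7]{BLW} of type $((1^n),(0^{n_0},1^{n_1}))$, so that \cite[Theorem 3.10]{BLW} applies directly. Your sketch of the special projective functors, the Casimir eigenvalue decomposition, and the passage to the quiver Hecke action via the completed degenerate affine Hecke algebra is essentially an outline of how that cited theorem is proved, and you correctly invoke it at the end, so no gap remains.
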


Let us give a little more detail about the highest weight structure here.
The irreducible objects of $\O_{n_0|n_1}'$ are parametrized naturally by their
highest weights. We denote the one of highest weight $\lambda_{\bb}'$ by
$L'(\bb)$. It can be constructed explicitly as the unique irreducible
quotient of the corresponding 
Verma supermodule $M'(\bb)$. This is the standard object in the
highest weight category $\O_{n_0|n_1}'$ indexed by $\bb \in \B$.

Next, let $\F_{n_0|n_1}'$ be the subcategory of $\O_{n_0|n_1}'$ consisting of all of the
finite-dimensional supermodules. Note $\F_{n_0|n_1}'$ may also be described as
the Serre subcategory of $\O_{n_0|n_1}'$ generated by the irreducible
objects $\big\{L'(\bb)\:\big|\:\bb \in \B^{n_0|n_1}\big\}$.
This follows from Kac' classification of finite dimensional
$\mathfrak{g}'$-supermodules in \cite{Kac}. The argument there
realizes each of the finite-dimensional $L'(\bb)$ as a quotient of a
corresponding {\em Kac supermodule} $K'(\bb)$. 

The categorical $\mathfrak{sl}_\infty$-action on $\O_{n_0|n_1}'$ restricts to an
action on $\F_{n_0|n_1}'$. Taking the type $(\underline{n},
\underline{c})$ of \cite[Definition 3.7]{BLW} to be
$((n_0, n_1), (0,1))$, we get the following 
as another special case of \cite[Theorem 3.10]{BLW}, recalling also
\cite[Definition 2.10]{BLW} for this more general sort of TPC.

\begin{lemma}\label{serena}
The category $\F_{n_0|n_1}'$ is a TPC
of the 
$\mathfrak{sl}_\infty$-module $\bigwedge^{n_0} V^+ \otimes
\bigwedge^{n_1} V^-$.
\end{lemma}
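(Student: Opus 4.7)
The plan is to invoke \cite[Theorem 3.10]{BLW} directly, exactly as was done for Lemma~\ref{venus}, but now applied to the coarser type data $(\underline{n},\underline{c}) = ((n_0,n_1),(0,1))$ in place of $((1^n),(0^{n_0},1^{n_1}))$. The Brundan--Losev--Webster theorem manufactures, from such type data, a highest weight category carrying a categorical $\mathfrak{sl}_\infty$-action that categorifies the tensor product whose $r$th factor is $\bigwedge^{n_r} V^{\sigma_r}$, where $\sigma_r = +$ if $c_r = 0$ and $\sigma_r = -$ if $c_r = 1$. So our chosen type produces a TPC of $\bigwedge^{n_0} V^+ \otimes \bigwedge^{n_1} V^-$, and the work is to verify that its underlying category is $\F'_{n_0|n_1}$ equipped with its natural categorical action.

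First I would unpack \cite[Definition 3.7]{BLW} for the type $((n_0,n_1),(0,1))$ and identify the output with $\F'_{n_0|n_1}$. Taking the parts of $\underline{n}$ to be $n_0,n_1$ rather than $n$ ones replaces the Borel subalgebra (used in the definition of $\O'_{n_0|n_1}$) by the parabolic whose even Levi is $\mathfrak{gl}_{n_0}(\CC) \oplus \mathfrak{gl}_{n_1}(\CC)$, and enforces local finiteness for the action of this Levi. Combined with the local finite-dimensionality over the Borel already present in $\O'_{n_0|n_1}$, this is exactly the condition that a supermodule in $\O'_{n_0|n_1}$ be finite-dimensional, by Kac' classification of finite-dimensional $\mathfrak{g}'$-supermodules.

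Second, I would verify that the categorical $\mathfrak{sl}_\infty$-action on $\F'_{n_0|n_1}$ produced by \cite[Theorem 3.10]{BLW} agrees with the one obtained from Lemma~\ref{venus} by restriction. This is essentially automatic: in both cases the endofunctors $E_i, F_i$ arise as the generalized $i$-eigenspace summands of the special projective superfunctors $U' \otimes -$ and $(U')^* \otimes -$, where $U'$ is the natural $\mathfrak{g}'$-supermodule, and this construction only depends on $\mathfrak{g}'$, not on the choice of type data. The standard objects of the resulting highest weight structure should then be identified with the Kac supermodules $\{K'(\bb)\:|\:\bb \in \B^{n_0|n_1}\}$, ordered by $\preceq_\bsigma$.

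The main obstacle is purely bookkeeping: matching, term by term, the parametrization of weights, standard objects and adjoint endofunctors in the abstract framework of \cite[Definition 3.7]{BLW} against their concrete incarnations in $\F'_{n_0|n_1}$. Since an essentially identical dictionary was used to establish Lemma~\ref{venus} for $\O'_{n_0|n_1}$, this comparison should present no new difficulty.
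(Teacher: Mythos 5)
Your proposal matches the paper's own justification: the paper obtains Lemma~\ref{serena} precisely as a special case of \cite[Theorem 3.10]{BLW} applied to the type $(\underline{n},\underline{c})=((n_0,n_1),(0,1))$, with the identification of the resulting parabolic-type category with $\F'_{n_0|n_1}$ (finite-dimensionality via the classification of finite-dimensional $\mathfrak{g}'$-supermodules, Kac supermodules as standard objects) left as the same routine bookkeeping you describe. So your argument is correct and takes essentially the same route as the paper.
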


Part of the content of Lemma~\ref{serena} is that $\F_{n_0|n_1}'$ is a highest
weight category. Its
standard objects are the Kac supermodules $\big\{K'(\bb)\:\big|\:\bb\in\B^+\big\}$ mentioned already.

The idea now is to truncate $\F_{n_0|n_1}' \hookrightarrow \O_{n_0|n_1}'$ from
$\mathfrak{sl}_\infty$ to $\mathfrak{sl}_{+\infty}$ to obtain our
alternate realization of the categories $\overline{\F}_{n_0|n_1}
\hookrightarrow \overline{\O}_{n_0|n_1}$. The construction we need for
this has already been developed in \cite[$\S$2.8]{BLW} (and is
entirely analogous to $\S\S$\ref{roger1}--\ref{roger2} above).

Recalling Lemma~\ref{rafa}, let $\overline{\O}_{n_0|n_1}'$ be the quotient of $\O_{n_0|n_1}'$
by the Serre subcategory generated by $\big\{L'(\bb)\:\big|\:\bb \in \B
\setminus \B_0\big\}$.
Denoting the canonical image of $L'(\bb)$ in $\overline{\O}_{n_0|n_1}'$ by
$\overline{L}'(\bb)$, the irreducible objects of $\overline{\O}_{n_0|n_1}'$ are
represented by $\big\{\overline{L}'(\bb)\:\big|\:\bb \in \B_0\big\}$.
Let $\overline{\F}_{n_0|n_1}'$ be the Serre subcategory of $\overline{\O}_{n_0|n_1}'$
generated by $\big\{\overline{L}'(\bb)\:\big|\:\bb \in \B_0^+\big\}$.
Analogously to (\ref{roger}), we get a commutative diagram of functors:
\begin{equation}
\begin{CD}
\F_{n_0|n_1}'&@>>>&\O_{n_0|n_1}'\phantom{.}\\
@VR VV&&@VVV\\
\overline{\F}_{n_0|n_1}'&@>>>&\overline{\O}_{n_0|n_1}'.
\end{CD}
\end{equation}
The categorical $\mathfrak{sl}_{\infty}$-actions on $\O'_{n_0|n_1}$ and
$\F'_{n_0|n_1}$
restrict to actions of $\mathfrak{sl}_{+\infty}$. These then induce
categorical $\mathfrak{sl}_{+\infty}$-actions on 
$\overline{\O}'_{n_0|n_1}$ and $\overline{\F}_{n_0|n_1}'$, so that all
of the above functors are strongly equivariant.

\begin{lemma}\label{last}
Let $\widetilde{\F}_{n_0|n_1}'$ be the quotient of $\F_{n_0|n_1}'$ by
the Serre subcategory generated by
$\big\{L'(\bb)\:\big|\:\bb \in \B^{n_0|n_1} \setminus
\B^{n_0|n_1}_0\big\}$.
The functor $R:\F_{n_0|n_1}' \rightarrow \overline{\F}_{n_0|n_1}'$
induces an
equivalence
$\widetilde{R}:\widetilde{\F}_{n_0|n_1}' \rightarrow \overline{\F}_{n_0|n_1}'$.
\end{lemma}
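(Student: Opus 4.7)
The plan is to mirror the proof of Lemma~\ref{q}. The first step is to observe that $R$ descends to $\widetilde{R}$: since $R$ factors through the Serre quotient $\O'_{n_0|n_1} \twoheadrightarrow \overline{\O}'_{n_0|n_1}$, which kills every $L'(\bc)$ with $\bc \in \B\setminus\B_0$, and since $\B_0^{n_0|n_1} = \B^{n_0|n_1}\cap\B_0$ (Lemma~\ref{rafa}), $R$ must annihilate the Serre subcategory of $\F'_{n_0|n_1}$ generated by $\{L'(\bb)\:|\:\bb \in \B^{n_0|n_1}\setminus \B_0^{n_0|n_1}\}$. The universal property then yields $\widetilde{R}$, which sends each irreducible $L'(\bb)$ of $\widetilde{\F}'_{n_0|n_1}$, for $\bb \in \B_0^{n_0|n_1}$, to the nonzero irreducible $\overline{L}'(\bb)$ of $\overline{\F}'_{n_0|n_1}$.

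Next I would establish full faithfulness. The categorical $\mathfrak{sl}_{+\infty}$-action on $\F'_{n_0|n_1}$ preserves the Serre subcategory being killed: an inspection of (\ref{sigdef}) shows that for $i \in I_0$ the condition $\isig_t^\bsigma(\bb) \in \{\mathtt{e},\mathtt{f}\}$ forces $b_t > 0$, so $E_i$ and $F_i$ only modify the strictly positive entries of a label and hence cannot move a label from $\B \setminus \B_0$ into $\B_0$. This induces an $\mathfrak{sl}_{+\infty}$-action on $\widetilde{\F}'_{n_0|n_1}$ under which $\widetilde{R}$ is strongly equivariant, and combined with the non-vanishing on irreducibles noted above, this yields full faithfulness by the same argument as in Lemma~\ref{q} (via the equivariance criterion \cite[Lemma 2.13]{BD1}).

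For essential surjectivity, it suffices to exhibit each indecomposable projective $\overline{P}'(\bb)$ of $\overline{\F}'_{n_0|n_1}$ as a summand of something in the essential image of $\widetilde{R}$; the general case then follows by a two-step projective resolution and the exactness of $\widetilde{R}$, as in Lemma~\ref{q}. Pick a $\mathfrak{gl}_{n_0|n_1}$-typical element $\ba \in \B_0^{n_0|n_1}$, so that $K'(\ba) = L'(\ba)$ is both standard and projective in $\F'_{n_0|n_1}$, and $\widetilde{R}$ sends it to $\overline{L}'(\ba)$. For a general $\bb \in \B_0^{n_0|n_1}$, I would connect $\bb$ to some such typical $\ba$ via a sequence of $\mathfrak{sl}_{+\infty}$-crystal operators $\tilde e_i, \tilde f_i$ $(i \in I_0)$; arguing as in Corollary~\ref{thegraph}, this produces a composition $X$ of the functors $E_i, F_i$ with $\overline{L}'(\bb)$ in the head of $X\overline{L}'(\ba) = \widetilde{R}(X L'(\ba))$. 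Since $X L'(\ba)$ is projective (translations preserve projectives), the object $X \overline{L}'(\ba)$ is projective in $\overline{\F}'_{n_0|n_1}$ and contains $\overline{P}'(\bb)$ as a summand.

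The principal obstacle is the crystal-connection step invoked in the previous paragraph: the $\mathfrak{sl}_{+\infty}$-analog of Lemma~\ref{connection} asserting that every $\bb \in \B_0^{n_0|n_1}$ lies in the crystal orbit of a $\mathfrak{gl}_{n_0|n_1}$-typical element. I expect this to follow by an induction on atypicality parallel to the proof of Lemma~\ref{connection}, but with the simpler signature rule (\ref{sigdef}) replacing (\ref{newisig}); no new combinatorial difficulties beyond those handled there are anticipated.
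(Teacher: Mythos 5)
Your proposal is correct and follows essentially the same route as the paper: descend $R$ through the Serre quotient by the universal property, get full faithfulness as in Lemma~\ref{q} from strong equivariance plus non-vanishing on irreducibles, and prove density by realizing each indecomposable projective as a summand of translation functors applied to a typical irreducible reached by crystal operators. The only point where the paper is slicker than your plan is the step you flag as the "principal obstacle": the type A analog of Lemma~\ref{connection} is not re-proved by a new induction, because the bijection $\bb \mapsto \bb'$ of (\ref{primemap}) preserves atypicality (now the number of pairs with $b_r = b_s$) and intertwines the operators $\tilde e_i, \tilde f_i\ (i \in I_0)$ with the crystal operators of \cite[Lemma 2.23]{BLW}, so Lemma~\ref{connection} and the required analog of Corollary~\ref{thegraph} transport directly across it.
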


\begin{proof}
By the universal property of Serre quotients, $R$ induces
$\widetilde{R}: \widetilde{\F}_{n_0|n_1}'\rightarrow \overline{\F}_{n_0|n_1}'$. As in the proof of Lemma~\ref{q}, $\widetilde{R}$ is
fully faithful. To show that it is dense, we show equivalently
that $R$ is dense, again by mimicking the arguments from the proof of
Lemma~\ref{q}.
This involves replacing the notion of atypicality and the crystal
structure used in the proof of that lemma with their counterparts in
the category $\O'$. For $\bb \in \B^{n_0|n_1}$, its atypicality is the
number of pairs $1 \leq r < s \leq n$ such that $b_r = b_s$.
The appropriate crystal structure, and the required analog of
Corollary~\ref{thegraph}, are described in \cite[Lemma 2.23]{BLW}.
Actually, the bijection $\B^+_{n_0|n_1}
\stackrel{\sim}{\rightarrow}
\B^{n_0|n_1}_0, \bb \mapsto \bb'$ preserves atypicality, and intertwines the crystal operators
$\tilde e_i, \tilde f_i\:(i \in I_0)$ from $\S$\ref{connection} with the crystal operators
$\tilde e_i, \tilde f_i\:(i \in I_0)$ defined in \cite{BLW}. Then
the argument in the proof of Lemma~\ref{q} (dependent especially on
the combinatorial
Lemma~\ref{connection}) carries over almost immediately.
\end{proof}

\begin{lemma}\label{most}
The categories $\overline{\O}_{n_0|n_1}'$ and $\overline{\F}_{n_0|n_1}'$ are TPCs of $(V_0^+)^{\otimes n_0} \otimes
(V_0^-)^{\otimes n_1}$ and $\bigwedge^{n_0} V_0^+ \otimes
\bigwedge^{n_1} V_0^-$, respectively.
\end{lemma}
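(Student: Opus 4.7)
\emph{Plan of proof.}
The plan is to derive both statements as instances of the general truncation-from-$\mathfrak{sl}_\infty$-to-$\mathfrak{sl}_{+\infty}$ procedure developed in \cite[$\S$2.8]{BLW}; this is the type A counterpart of Theorems~\ref{fk} and~\ref{jordan} above. The starting data are Lemmas~\ref{venus} and~\ref{serena}, which endow $\O_{n_0|n_1}'$ and $\F_{n_0|n_1}'$ with the structure of $\mathfrak{sl}_\infty$-tensor product categorifications of $(V^+)^{\otimes n_0}\otimes(V^-)^{\otimes n_1}$ and $\bigwedge^{n_0}V^+\otimes\bigwedge^{n_1}V^-$, respectively. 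The key combinatorial input is Lemma~\ref{rafa}: $\B_0$ (respectively $\B_0^{n_0|n_1}$) is a coideal in $\B$ (respectively $\B^{n_0|n_1}$) for the $\mathfrak{sl}_\infty$-Bruhat order $\preceq_\bsigma$.

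First I would set up the Serre quotients. Since the complements $\B\setminus\B_0$ and $\B^{n_0|n_1}\setminus\B_0^{n_0|n_1}$ are ideals, the quotient categories $\overline{\O}_{n_0|n_1}'$ and $\overline{\F}_{n_0|n_1}'$ inherit highest weight structures with weight posets $(\B_0,\preceq_\bsigma)$ and $(\B_0^{n_0|n_1},\preceq_\bsigma)$ by the general framework of \cite[$\S$2.5]{BLW}. Under the bijection $\bb\mapsto\bb'$ from (\ref{primemap}), these posets are precisely the indexing sets needed to verify Definition~\ref{tpcdef2} and its wedge analog for the $\mathfrak{sl}_{+\infty}$-modules $(V_0^+)^{\otimes n_0}\otimes(V_0^-)^{\otimes n_1}$ and $\bigwedge^{n_0} V_0^+\otimes\bigwedge^{n_1} V_0^-$.

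Next I would show that the categorical $\mathfrak{sl}_\infty$-action restricts to an $\mathfrak{sl}_{+\infty}$-action and then descends to the quotients. By adjointness it suffices to check that each $F_i$ for $i\in I_0$ preserves the Serre subcategory generated by irreducibles labeled by the complement of the coideal. On standard objects this follows from the observation that whenever $\bb\in\B_0$ and $\isig_t^\bsigma(\bb)\in\{\mathtt{e},\mathtt{f}\}$ for some $i\in I_0$, the modified tuple $\bb\pm\sigma_t\bd_t$ still lies in $\B_0$; indeed, since $i\geq 1$, the rule (\ref{sigdef}) only alters strictly positive entries. The Verma-filtration axioms of Definition~\ref{tpcdef2} then follow by passing the given filtrations of $F_i\Delta(\bb)$ and $E_i\Delta(\bb)$ to the quotient, and the monoidal functor factors through the inclusion $\QH_0\hookrightarrow\QH$. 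Local nilpotency of the dot natural transformation is preserved automatically.

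The assertion for $\overline{\F}_{n_0|n_1}'$ runs in complete parallel, using the second half of Lemma~\ref{rafa} together with the fact that $\F_{n_0|n_1}'$ is already an $\mathfrak{sl}_\infty$-subcategorification of a wedge product tensor. There is no significant obstacle here: the framework already exists in \cite{BLW}, all required combinatorial lemmas (especially Lemma~\ref{rafa} and the poset isomorphism from Lemma~\ref{threw}) have been established, and the verification is the exact type A analog of the bookkeeping in Theorems~\ref{fk} and~\ref{jordan}. The only mild subtlety worth being careful about is matching the weight posets under $\bb\mapsto\bb'$ so that induced standard objects correspond to the expected monomial basis vectors of the target $\mathfrak{sl}_{+\infty}$-module.
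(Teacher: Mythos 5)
Your treatment of $\overline{\O}'_{n_0|n_1}$ is essentially the paper's: it is an immediate special case of the $\mathfrak{sl}_\infty$-to-$\mathfrak{sl}_{+\infty}$ truncation of \cite[Theorem 2.19]{BLW} (the type A analog of Theorems~\ref{fk} and~\ref{jordan}), fed by Lemma~\ref{venus} and the coideal statement in Lemma~\ref{rafa}. The small slip that you check stability of the coideal $\B_0$ rather than of the complementary ideal $\B\setminus\B_0$ (which is what is needed for the functors $E_i,F_i$, $i\in I_0$, to descend to the Serre quotient) is harmless, since the underlying observation---that these operators never change the sign of an entry---gives both statements.

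The genuine gap is in the second assertion. What the truncation machinery applied to the $\mathfrak{sl}_\infty$-TPC $\F'_{n_0|n_1}$ of Lemma~\ref{serena} produces is a TPC structure on the Serre \emph{quotient} $\widetilde{\F}'_{n_0|n_1}$ of $\F'_{n_0|n_1}$ by the subcategory generated by $\{L'(\bb)\:|\:\bb\in\B^{n_0|n_1}\setminus\B_0^{n_0|n_1}\}$. But the category in the statement, $\overline{\F}'_{n_0|n_1}$, is defined differently: it is the Serre \emph{subcategory} of the quotient $\overline{\O}'_{n_0|n_1}$ generated by the images $\overline{L}'(\bb)$ for $\bb\in\B_0^{n_0|n_1}$. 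Your phrase ``runs in complete parallel'' silently identifies ``subcategory of the quotient'' with ``quotient of the subcategory,'' and that identification is not formal. The natural functor $R$ induces $\widetilde{R}:\widetilde{\F}'_{n_0|n_1}\rightarrow\overline{\F}'_{n_0|n_1}$, which is easily fully faithful, but its density is a real point: in the paper it is exactly Lemma~\ref{last}, proved by mimicking the proof of Lemma~\ref{q}---one realizes every indecomposable projective of $\overline{\F}'_{n_0|n_1}$ as a summand of an object obtained by applying the categorification functors to (the image of) a typical irreducible, using atypicality, the type A crystal of \cite[Lemma 2.23]{BLW}, and the connectivity argument underlying Lemma~\ref{connection}. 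As written, your proposal proves the TPC statement for $\widetilde{\F}'_{n_0|n_1}$ but not for $\overline{\F}'_{n_0|n_1}$; the fix is to invoke Lemma~\ref{last} (or reproduce its density argument) and transport the structure across the equivalence $\widetilde{R}$, which is how the paper concludes.
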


\begin{proof}
For $\overline{\O}_{n_0|n_1}'$, our statement follows immediately as a
special case of \cite[Theorem 2.19]{BLW}.
The same result shows that $\widetilde{\F}_{n_0|n_1}'$ is a TPC of
$\bigwedge^{n_0} V_0^+ \otimes
\bigwedge^{n_1} V_0^-$. It remains to appeal to Lemma~\ref{last} to get
the result for $\overline{\F}_{n_0|n_1}'$.
\end{proof}

\begin{theorem}\label{tax}
The $\mathfrak{sl}_{+\infty}$-categorifications 
$\overline{\mathcal O}_{n_0|n_1}$ and $\overline{\mathcal O}'_{n_0|n_1}$ are
strongly equivariantly equivalent via an equivalence which sends
$\overline{L}(\bb)$
to a copy of $\overline{L}'(\bb')$ for each $\bb \in \B_{n_0|n_1}^\#$.
\end{theorem}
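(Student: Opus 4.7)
The plan is to reduce to a uniqueness theorem for $\mathfrak{sl}_{+\infty}$-tensor product categorifications. By Lemma~\ref{beans}, $\overline{\mathcal{O}}_{n_0|n_1}$ is a TPC of the $\mathfrak{sl}_{+\infty}$-module $(V_0^+)^{\otimes n_0}\otimes (V_0^-)^{\otimes n_1}$, with weight poset $\B^\#_{n_0|n_1}$ ordered by the $\mathfrak{sp}_{2\infty}$-Bruhat order $\preceq$; by Lemma~\ref{most}, $\overline{\mathcal{O}}'_{n_0|n_1}$ is another TPC of the same $\mathfrak{sl}_{+\infty}$-module, with weight poset $\B_0$ ordered by the $\mathfrak{sl}_{+\infty}$-Bruhat order $\preceq_{\bsigma}$ for $\bsigma=(+,\dots,+,-,\dots,-)$. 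The bijection $\bb \mapsto \bb'$ from (\ref{primemap}) identifies these two index sets, and by Lemma~\ref{threw} it is a poset isomorphism $(\B^\#_{n_0|n_1},\preceq) \stackrel{\sim}{\rightarrow} (\B_0,\preceq_{\bsigma})$. Thus both categorifications have the same labelled, ordered weight poset.

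The natural next step is to invoke the uniqueness theorem for $\mathfrak{sl}_{+\infty}$-tensor product categorifications established in \cite{BLW}; this is the $\mathfrak{sl}_{+\infty}$-analog of Theorem~\ref{uniqueness} that was already used to prove Lemma~\ref{most} and Corollary~\ref{here}. That result asserts that any two TPCs of a given tensor product of minuscule representations of $\mathfrak{sl}_{+\infty}$ are strongly equivariantly equivalent via an equivalence preserving the labelling of irreducible objects. Applied to our two categorifications, it produces the desired equivalence $\overline{\mathcal{O}}_{n_0|n_1} \stackrel{\sim}{\rightarrow} \overline{\mathcal{O}}'_{n_0|n_1}$ sending $\overline{L}(\bb)$ to $\overline{L}'(\bb')$ for each $\bb \in \B^\#_{n_0|n_1}$.

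The main technical point (and the thing that needs to be checked carefully) is that the uniqueness theorem of \cite{BLW} really applies in this generality. The categorification $\overline{\mathcal{O}}_{n_0|n_1}$ arises by truncation from a TPC of the $\mathfrak{sp}_{2\infty}$-module $V^{\otimes n}$, whereas $\overline{\mathcal{O}}'_{n_0|n_1}$ arises by truncation from a TPC of the $\mathfrak{sl}_\infty$-module $(V^+)^{\otimes n_0}\otimes (V^-)^{\otimes n_1}$. Despite these different origins, both satisfy the axioms of Definition~\ref{tpcdef2}: the branching rules for $F_i\Delta(\bb)$ and $E_i\Delta(\bb)$ on standard objects match (this was built into Theorem~\ref{jordan} on the one hand, and into Lemma~\ref{venus}/Lemma~\ref{most} on the other), the quiver Hecke action is given, and local nilpotency of the dot holds. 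Once one has verified these axioms for both, the uniqueness theorem delivers the conclusion.

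The only potential obstacle is bookkeeping: we must be sure that the identification $\bb \mapsto \bb'$ intertwines, not just the Bruhat orders, but also the combinatorics of the $i$-signatures that controls the branching of $F_i$ and $E_i$. This compatibility was already recorded in the proof of Theorem~\ref{jordan}, where it was noted that $\isig_t(\bb)=\isig^\bsigma_t(\bb')$ and $(\bb \pm \sigma_t\bd_t)' = \bb'\pm\bd_t$ for $\bb \in \B_\bsigma$ and $i \in I_0$. With this compatibility in hand, the two TPCs are TPCs of the same module with matching weight posets, and the uniqueness theorem finishes the proof.
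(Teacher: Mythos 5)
Your proposal is correct and follows essentially the same route as the paper: both categories are identified as TPCs of $(V_0^+)^{\otimes n_0}\otimes(V_0^-)^{\otimes n_1}$ via Lemmas~\ref{beans} and \ref{most}, and the conclusion is then immediate from the uniqueness theorem for $\mathfrak{sl}_{+\infty}$-tensor product categorifications (\cite[Theorem 2.12]{BLW}), with the labelling matched by the poset isomorphism $\bb\mapsto\bb'$. The extra compatibility checks you flag (Bruhat orders and $i$-signatures under $\bb\mapsto\bb'$) were indeed already absorbed into the proofs of Lemma~\ref{threw} and Theorem~\ref{jordan}, so nothing further is needed.
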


\begin{proof}
In Lemmas~\ref{beans} and \ref{most}, we have shown that both categories are TPCs of $(V_0^+)^{\otimes n_0}
\otimes (V_0^-)^{\otimes n_1}$.
Now the result follows from the uniqueness theorem for such TPCs, which
is a special case of \cite[Theorem 2.12]{BLW}.
\end{proof}

\begin{corollary}\label{much}
The $\mathfrak{sl}_{+\infty}$-categorifications 
$\overline{\F}_{n_0|n_1}$ and $\overline{\mathcal F}'_{n_0|n_1}$ are
strongly equivariantly equivalent via an equivalence which sends
$\overline{L}(\bb)$
to a copy of $\overline{L}'(\bb')$ for each $\bb \in \B_{n_0|n_1}^+$.
\end{corollary}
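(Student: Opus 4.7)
My plan is to deduce Corollary~\ref{much} directly from Theorem~\ref{tax}. The idea is simply to restrict the strongly equivariant equivalence $\Phi:\overline{\O}_{n_0|n_1} \stackrel{\sim}{\rightarrow} \overline{\O}'_{n_0|n_1}$ supplied by Theorem~\ref{tax} to the Serre subcategories $\overline{\F}_{n_0|n_1}$ and $\overline{\F}'_{n_0|n_1}$ on either side.

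First I would recall the two definitions in question: by construction, $\overline{\F}_{n_0|n_1}$ is the Serre subcategory of $\overline{\O}_{n_0|n_1}$ generated by the irreducibles $\{\overline{L}(\bb) \mid \bb \in \B^+_{n_0|n_1}\}$, while $\overline{\F}'_{n_0|n_1}$ is the Serre subcategory of $\overline{\O}'_{n_0|n_1}$ generated by $\{\overline{L}'(\bb) \mid \bb \in \B_0^{n_0|n_1}\}$. I would then appeal to the observation recorded just before Lemma~\ref{rafa}, which states that the map $\bb \mapsto \bb'$ from (\ref{primemap}) restricts to a bijection $\B^+_{n_0|n_1} \stackrel{\sim}{\rightarrow} \B_0^{n_0|n_1}$. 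Since $\B^+_{n_0|n_1} \subseteq \B^\#_{n_0|n_1}$, Theorem~\ref{tax} applies on the nose to give $\Phi(\overline{L}(\bb)) \cong \overline{L}'(\bb')$ for every $\bb \in \B^+_{n_0|n_1}$; thus $\Phi$ carries the generating simples of $\overline{\F}_{n_0|n_1}$ bijectively onto those of $\overline{\F}'_{n_0|n_1}$.

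Since an equivalence of abelian categories that matches up the simple objects generating a Serre subcategory on each side must restrict to an equivalence between those subcategories, we obtain the desired equivalence $\overline{\F}_{n_0|n_1} \stackrel{\sim}{\rightarrow} \overline{\F}'_{n_0|n_1}$ with the correct labelling of irreducibles. Strong equivariance of the restriction is inherited from $\Phi$, once we note that both $\overline{\F}_{n_0|n_1}$ and $\overline{\F}'_{n_0|n_1}$ are stable under the categorical $\mathfrak{sl}_{+\infty}$-actions on $\overline{\O}_{n_0|n_1}$ and $\overline{\O}'_{n_0|n_1}$ respectively; this stability is implicit in the discussion preceding diagram~(\ref{roger}) and in Subsection~\ref{damp} for the primed side. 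There is no genuine obstacle here: the whole statement is a routine corollary of Theorem~\ref{tax} together with the matching of simple generators under $\bb \mapsto \bb'$.
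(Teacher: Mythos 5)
Your argument is correct and is essentially the paper's own proof: the paper likewise just recalls that $\overline{\F}_{n_0|n_1}$ and $\overline{\F}'_{n_0|n_1}$ are the Serre subcategories generated by $\{\overline{L}(\bb)\:|\:\bb\in\B^+_{n_0|n_1}\}$ and $\{\overline{L}'(\bb)\:|\:\bb\in\B^{n_0|n_1}_0\}$, notes that $\bb\mapsto\bb'$ is a bijection between these index sets, and then applies Theorem~\ref{tax}. Your extra remarks on restricting the equivalence and on stability under the $\mathfrak{sl}_{+\infty}$-actions only make explicit what the paper leaves implicit.
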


\begin{proof}
Recall $\overline{\F}_{n_0|n_1}$ is the Serre subcategory of
$\overline{\O}_{n_0|n_1}$ generated by $\big\{\overline{L}(\bb)\:\big|\:\bb
\in \B_{n_0|n_1}^+\big\}$,
$\overline{\F}'_{n_0|n_1}$ is the Serre subcategory of
$\overline{\O}'_{n_0|n_1}$ generated by
$\big\{\overline{L}'(\bb)\:\big|\:\bb \in \B^{n_0|n_1}_0\big\}$,
and the map $\bb \mapsto \bb'$ is a bijection between $\B_{n_0|n_1}^+$
and $\B^{n_0|n_1}_0$.
Then apply Theorem~\ref{tax}.
\end{proof}

\begin{corollary}
The category $\mathcal F_{n_0|n_1}$ is a TPC of $\bigwedge^{n_0} V_0^+
\otimes \bigwedge^{n_1} V_0^-$. In particular, it is a highest weight
category with 
weight poset $(\B_{n_0|n_1}^+, \preceq)$ and irreducible objects
represented by $\big\{L(\bb)\:\big|\:\bb \in \B_{n_0|n_1}^+\big\}$.
\end{corollary}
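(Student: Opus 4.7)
The plan is to chain together the three main equivalences and transport results developed for $\F'_{n_0|n_1}$ through them.  Combining Lemma~\ref{q} with Corollary~\ref{much}, we get a composition
\[
\F_{n_0|n_1} \;\xrightarrow{\;Q\;}\; \overline{\F}_{n_0|n_1} \;\xrightarrow{\sim}\; \overline{\F}'_{n_0|n_1}
\]
which is a strongly equivariant equivalence of $\mathfrak{sl}_{+\infty}$-categorifications.  By Lemma~\ref{most}, the rightmost category carries the structure of a TPC of $\bigwedge^{n_0} V_0^+ \otimes \bigwedge^{n_1} V_0^-$.  Transporting this structure back along the composite equivalence endows $\F_{n_0|n_1}$ itself with the structure of such a TPC.

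Next I would identify the labeling of irreducibles and the weight poset.  Tracing through, the equivalence sends $L(\bb)$ to $\overline{L}(\bb)$ by Lemma~\ref{q}, and then to $\overline{L}'(\bb')$ by Corollary~\ref{much}, for each $\bb \in \B^+_{n_0|n_1}$.  On the $\F'_{n_0|n_1}$ side, the TPC structure from Lemma~\ref{most} comes with the standard labeling of irreducibles by $\B^{n_0|n_1}_0$ ordered by the $\mathfrak{sl}_{+\infty}$-Bruhat order $\preceq_\bsigma$ (with $\bsigma = (+,\dots,+,-,\dots,-)$).  Under the bijection $\B^+_{n_0|n_1} \stackrel{\sim}{\rightarrow} \B^{n_0|n_1}_0,\: \bb \mapsto \bb'$, which is a poset isomorphism by Lemma~\ref{threw}, the resulting weight poset becomes $(\B^+_{n_0|n_1}, \preceq)$, exactly as asserted.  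Since every TPC is in particular a highest weight category, the final sentence of the corollary follows immediately.

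The only genuinely delicate point is verifying that the $\mathfrak{sl}_{+\infty}$-categorification structure produced on $\F_{n_0|n_1}$ by transport through the equivalence genuinely coincides with the one already present on $\F_{n_0|n_1}$ as an $\mathfrak{sl}_{+\infty}$-subcategorification of $\O_{n_0|n_1}$.  This is guaranteed because $Q$ was built to be strongly equivariant, and Corollary~\ref{much} was stated as a strongly equivariant equivalence; so the categorical actions match on the nose after composing.  There is no substantive obstacle beyond bookkeeping: all the heavy lifting (the uniqueness theorem from \cite{BLW}, the comparison of crystal structures, and the density arguments in Lemmas~\ref{q} and \ref{last}) has already been carried out, and the present corollary is simply the conclusion obtained by unwinding what these earlier results say about irreducible objects and their labels.
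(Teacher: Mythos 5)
Your proposal is correct and follows essentially the same route as the paper, which deduces the corollary directly from Lemma~\ref{q}, Corollary~\ref{much} and Lemma~\ref{most}; you have simply spelled out the transport of the TPC structure and the identification of the weight poset via $\bb \mapsto \bb'$ in more detail than the paper's one-line proof.
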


\begin{proof}
This follows from Lemma~\ref{q}, Corollary~\ref{much} and Lemma~\ref{most}.
\end{proof}

\subsection{Realization of $\F$ via arc algebras}
In the final subsection, we are going to briefly explain another realization of the
category $\F$ in terms of the generalized Khovanov arc
algebras of \cite{BS1}. 
We will assume the reader is familiar with the language and constructions in
\cite{BS1, BS4}.

Let $\Lambda$ be the set of weights in the diagrammatic sense of
\cite[$\S$2]{BS1}
drawn on a number line with vertex set $I_0$, such that 
the number of vertices labelled $\times$ plus the number of vertices
labelled $\circ$ plus two times the number of vertices labelled
$\scriptstyle\vee$ is equal to $n$;
all of the (infinitely many) remaining vertices are labelled $\scriptstyle\wedge$.
The set $\Lambda$ is in bijection with
\begin{equation}\label{lunch}
\B^+ := \{\bb \in \B\:|\:b_1>\cdots>b_n\} = \bigcup_{n_0+n_1=n}
\B^+_{n_0|n_1}
\end{equation}
according to the following {\em weight dictionary}.
Given $\bb \in \B^+$, let 
\begin{align*}
I_\down(\bb) &:= \{b_r\:|\:r=1,\dots,n, b_r > 0\}\\
I_\up(\bb) &:= I_0 \setminus \{1-b_r\:|\:r=1,\dots,n, b_r \leq 0\}.
\end{align*}
Then we identify $\bb$ with the element of $\Lambda$ whose $i$th
vertex is labelled 
\begin{equation}\label{wtdict}
\left\{
\begin{array}{ll}
\circ&\text{if $i$ does not belong to either $I_\down(\la)$ or $I_\up(\la)$,}\\
{\scriptstyle\down} &\text{if $i$ belongs to $I_\down(\la)$ but not to
$I_\up(\la)$,}\\
{\scriptstyle\up} &\text{if $i$ belongs to $I_\up(\la)$ but not to
$I_\down(\la)$,}\\
{\scriptstyle\times}&\text{if $i$ belongs to both $I_\down(\la)$ and $I_\up(\la)$.}
\end{array}\right.
\end{equation}
Let $K_\Lambda$ be the generalized Khovanov algebra associated to the
set $\Lambda$ as defined in \cite{BS1}. This is a basic algebra
with isomorphism classes of irreducible representations indexed in a
canonical way by the set $\Lambda$.

\begin{theorem}\label{arcy}
There is an equivalence of categories between $\F$ and the
category $K_\Lambda\text{-}\mod$ of finite-dimensional left
$K_\Lambda$-modules.
It sends $L(\bb)\:(\bb\in\B^+)$ to the irreducible $K_\Lambda$-module indexed by the
element of $\Lambda$ associated to $\bb$ according to the above weight dictionary.
\end{theorem}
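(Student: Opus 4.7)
The plan is to break both sides of the claimed equivalence along the $n_0|n_1$ decomposition and match them up piece-by-piece. On the representation side we have $\F = \bigoplus_{n_0+n_1=n}\F_{n_0|n_1}$ from the previous subsection. On the diagrammatic side, the weight dictionary (\ref{wtdict}) refines to a bijection between $\B^+_{n_0|n_1}$ and the subset $\Lambda_{n_0|n_1}\subset\Lambda$ of diagrams having exactly $n_0$ vertices labelled $\down$ or $\times$; since the blocks of $K_\Lambda$ preserve the positions of $\circ$'s and $\times$'s (and hence the quantity $\#\down+\#\times$), this induces a decomposition $K_\Lambda\text{-}\mod = \bigoplus_{n_0+n_1=n} K_{\Lambda_{n_0|n_1}}\text{-}\mod$. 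It therefore suffices, for each fixed $n_0,n_1$, to produce an equivalence $\F_{n_0|n_1}\simeq K_{\Lambda_{n_0|n_1}}\text{-}\mod$ sending $L(\bb)$ to the irreducible indexed by the diagram associated to $\bb\in\B^+_{n_0|n_1}$.

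First I would assemble the equivalences already established in $\S$\ref{damp}. Lemma~\ref{q} provides an equivalence $Q:\F_{n_0|n_1}\stackrel{\sim}{\to}\overline{\F}_{n_0|n_1}$ sending $L(\bb)\mapsto\overline{L}(\bb)$, while Corollary~\ref{much} combined with the inverse of the equivalence $\widetilde{R}$ of Lemma~\ref{last} produces a further equivalence $\overline{\F}_{n_0|n_1}\simeq\widetilde{\F}'_{n_0|n_1}$, under which $\overline{L}(\bb)$ is identified with the canonical image in $\widetilde{\F}'_{n_0|n_1}$ of the finite-dimensional irreducible $\mathfrak{gl}_{n_0|n_1}(\CC)$-supermodule $L'(\bb')$, for $\bb'\in\B^{n_0|n_1}_0$ obtained from $\bb$ via (\ref{primemap}). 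In this way $\F_{n_0|n_1}$ is realized as a Serre quotient of $\F'_{n_0|n_1}$ by the Serre subcategory generated by $\{L'(\bb)\:|\:\bb\in\B^{n_0|n_1}\setminus\B^{n_0|n_1}_0\}$.

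Next I would invoke the main theorem of \cite{BS4}, which identifies the blocks of $\F'_{n_0|n_1}$ with module categories over certain generalized Khovanov arc algebras; aggregated over all blocks this yields an equivalence between $\F'_{n_0|n_1}$ and the category of modules over an arc algebra $K_{\widetilde\Lambda}$ on a doubly-infinite number line, indexed by $\B^{n_0|n_1}$ under the \cite{BS4} weight dictionary, with $L'(\bb)$ corresponding to the irreducible labelled by the diagram associated to $\bb$. The irreducibles $L'(\bb)$ with $\bb\in\B^{n_0|n_1}\setminus\B^{n_0|n_1}_0$ get sent to irreducibles whose labels carry decorations on vertices $\leq 0$, and on the diagrammatic side killing these is exactly the operation of restricting the weight set to those diagrams supported only on $I_0=\Z_+$, which after applying $\bb\mapsto\bb'$ becomes precisely $\Lambda_{n_0|n_1}$. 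Chaining everything together delivers the desired equivalence $\F_{n_0|n_1}\simeq K_{\Lambda_{n_0|n_1}}\text{-}\mod$ with the required identification on irreducibles.

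The main obstacle will be aligning the two separate weight dictionaries in play: (\ref{wtdict}) here versus the one implicit in the equivalence of \cite{BS4} for finite-dimensional $\mathfrak{gl}_{n_0|n_1}(\CC)$-supermodules. Unwinding the shifts $\lambda'_{\bb,r} = b_r + r - 1$ for $r\leq n_0$ and $\lambda'_{\bb,r} = -b_r + r - 2n_0$ for $r > n_0$ defined in $\S$\ref{damp}, together with the bijection $\bb\mapsto\bb'$ from (\ref{primemap}), one expects that the two dictionaries agree up to a simple reindexing of the diagrammatic vertex set; confirming this agreement amounts to a routine but unavoidable piece of combinatorial bookkeeping.
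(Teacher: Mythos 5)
Your proposal is correct and follows essentially the same route as the paper: decompose along $n_0|n_1$, use Lemma~\ref{q}, Lemma~\ref{last} and Corollary~\ref{much} to identify $\F_{n_0|n_1}$ with the quotient $\widetilde{\F}'_{n_0|n_1}$, then apply the main theorem of \cite{BS4}. The "routine bookkeeping" you defer is handled in the paper by noting that under the \cite{BS4} dictionary $\B_0^{n_0|n_1}$ corresponds to the diagrams with $\scriptstyle\wedge$ on all vertices $i\leq 0$, so the Serre quotient becomes modules over the idempotent truncation $\bigoplus_{\ba,\bb} e_\ba K_\Delta e_\bb$, which is visibly isomorphic to $K_{\Lambda(n_0|n_1)}$ after deleting the vertices indexed by $\Z_{\leq 0}$.
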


\begin{proof}
Corresponding to the decomposition (\ref{lunch}), we have that
$\Lambda = \bigcup_{n_0+n_1=n} \Lambda(n_0|n_1)$ where
$\Lambda(n_0|n_1)$ consists of the weights in $\Lambda$ whose diagrams
have $n_0$ entries equal to
$\scriptstyle\down$ or $\times$ and $n_1$ entries equal to
$\scriptstyle \down$ or $\circ$.
The algebra $K_\Lambda$ decomposes as $\bigoplus_{n_0+n_1=n}
K_{\Lambda(n_0|n_1)}$.
In view of (\ref{haha}), to prove the theorem, it suffices to show that $\F_{n_0|n_1}$ is
equivalent to $K_{\Lambda(n_0|n_1)}\text{-}\mod$.

By Lemma~\ref{q}, Lemma~\ref{last} and Corollary~\ref{much},
$\F_{n_0|n_1}$ is equivalent to the quotient
$\widetilde{\F}_{n_0|n_1}'$
of $\F_{n_0|n_1}'$ by the Serre
subcategory generated by $\big\{L'(\bb)\:\big|\:\bb \in
\B^{n_0|n_1}\setminus \B_0^{n_0|n_1}\}$.

By the main theorem of \cite{BS4}, $\F_{n_0|n_1}'$ is equivalent to
the category $K_{\Delta}\text{-}\mod$ of finite-dimensional
modules over another arc algebra $K_{\Delta}$. The set $\Delta$ of
weights this time are drawn on a number line with vertex set $\Z$,
such that the number of vertices labelled $\scriptstyle\down$ or
$\times$ is $n_0$, and the number labelled $\scriptstyle\down$ or
$\circ$ is $n_1$. Under the weight dictionary from the introduction of
\cite{BS4},
the set $\B_0^{n_0|n_1}$ is identified with the subset $\Delta_0$ of $\Delta$ 
consisting of weights $\bb$ whose diagrams have label $\scriptstyle \up$
on vertex $i$ for all $i \leq 0$.

We conclude that $\widetilde{\F}_{n_0|n_1}'$ is equivalent to the
category of finite-dimensional modules over the algebra
$\bigoplus_{\ba,\bb \in \Delta_0} e_\ba K_\Delta e_\bb$, where $e_\bb$
denotes the primitive idempotent in $K_\Delta$ indexed by $\bb$. 
Noting that $\Delta_0$ is in bijection with
$\Lambda(n_0|n_1)$ via the map which deletes all vertices indexed by
$\Z_{\leq 0}$,
this algebra is obviously isomorphic to $K_{\Lambda(n_0|n_1)}$.
\end{proof}

Theorem~\ref{arcy} has a number of consequences for the structure of the
category $\F$. We refer to the introduction of \cite{BS4} for a
comprehensive list: the present situation is entirely analogous.
It shows moreover that any block of $\F$ of atypicality $r$ (which in
the diagrammatic setting is the number of vertices labelled
$\scriptstyle\vee$ in weights belonging to the block) 
is Morita equivalent
to the algebra $K^{+\infty}_r$ from \cite{BS1}.
Thus, the category $\F$ gives the first known occurrence ``in nature'' of the
algebras $K^{+\infty}_r$.

\end{document}